\newcommand{\tpp}{\mathrm{tp}}
\newcommand{\minn}{\mathrm{min}}
\newcommand{\ACVF}{\mathrm{ACVF}}
\newcommand{\ACF}{\mathrm{ACF}}
\newcommand{\NIP}{\mathrm{NIP}}
\newcommand{\NTP}{\mathrm{NTP_2}}
\newcommand{\res}{\mathrm{res}}
\newcommand{\Aut}{\mathrm{Aut}}
\newcommand{\acl}{\mathrm{acl}}
\newcommand{\dcl}{\mathrm{dcl}}
\newcommand{\alg}{\mathrm{alg}}
\newcommand{\rv}{\mathrm{RV}}
\newcommand{\val}{\textit{v}}
\newcommand{\st}{\mathrm{St}}
\newcommand{\vf}{\mathrm{VF}}
\newcommand{\vg}{\Gamma}
\newcommand{\rf}{\mathrm{k}}
\newcommand{\A}{\mathcal{A}}
\newcommand{\U}{\mathcal{U}}
\newcommand{\Ualg}{\mathcal{U}_0}
\def\forkindep{\mathrel{\raise0.2ex\hbox{\ooalign{\hidewidth$\vert$\hidewidth\cr\raise-0.9ex\hbox{$\smile$}}}}}
\theoremstyle{plain}
\newtheorem{thm}{Theorem}[section]
\newtheorem*{rep@thm}{\rep@title}
\newcommand{\newreptheorem}[2]{%
\newenvironment{rep#1}[1]{%
 \def\rep@title{#2 \ref{##1}}%
 \begin{rep@thm}}%
 {\end{rep@thm}}}
\newtheorem*{rep@cor}{\rep@title}
\newcommand{\newrepcorollary}[2]{%
\newenvironment{rep#1}[1]{%
 \def\rep@title{#2 \ref{##1}}%
 \begin{rep@cor}}%
 {\end{rep@cor}}}
\newtheorem*{rep@prop}{\rep@title}
\newcommand{\newrepproposition}[2]{%
\newenvironment{rep#1}[1]{%
 \def\rep@title{#2 \ref{##1}}%
 \begin{rep@prop}}%
 {\end{rep@prop}}}
\newtheorem{lem}[thm]{Lemma}
\newtheorem{prop}[thm]{Proposition}
\newtheorem{cor}[thm]{Corollary}
\newtheorem{claim}{Claim}
\newtheorem{rem}[thm]{Remark}
\newtheorem{fact}[thm]{Fact}
\newtheorem{notation}[thm]{Notation}
\theoremstyle{definition}
\newtheorem{defn}[thm]{Definition}
\newtheorem{exmp}[thm]{Example}
\theoremstyle{remark}
\title{Residually dominated groups in henselian valued fields of equicharacteristic zero}
\title{Residually Dominated Groups in Henselian Valued Fields of Equicharacteristic Zero}
\author{Dicle Mutlu \thanks{University of Leeds}
\ and \ Paul Z. Wang\thanks{Sorbonne Université, LIP6. Previously affiliated with the École Normale Supérieure.}
}
\date{} 
\begin{document}

\maketitle
\begin{abstract} We introduce \emph{residually dominated groups} in pure henselian valued fields of equicharacteristic zero, as an analogue of stably dominated groups introduced by Hrushovski and Rideau-Kikuchi. We show that when $G$ is a residually dominated group, there is a finite-to-one group homomorphism from its connected component into a connected stably dominated group, and we study the functoriality and universality properties of this map. Moreover, we prove that residual domination is witnessed by a group homomorphism into a definable group in the residue field. In our proofs, we use the results of Montenegro, Onshuus, and Simon on groups definable in $\mathrm{NTP}_2$-theories that extend the theory of fields. Along the way, we also provide an algebraic characterization of residually dominated types, generalizing the work by Ealy, Haskell and Simon for stably dominated types in algebraically closed valued fields, and we study their properties.


\end{abstract}

The notion of stable domination was introduced by Haskell, Macpherson, and Hrushovski in \cite{HHM08} for algebraically closed valued fields. It was later extended to \textit{residue field domination} (which we refer to as \emph{residual domination}) in henselian fields of equicharacteristic zero by various authors in different contexts (for example, \cite{EHM19}, \cite{EHS23}, \cite{Vic22} and recently \cite{KRV24}). 


In this paper, we focus on residual domination and definable groups in henselian valued fields of equicharacteristic zero. 
A type $\tpp(a/C)$ is called \textit{residually dominated} if the following condition holds: for any element $b$, if the traces of $Ca$ and $Cb$ in the residue field are algebraically independent, then $\tpp(a/Cb)$ is determined by the type of $a$ over the trace of $Cb$ in the residue field (see Definition \ref{def_res_dom}). 
We work in the language $\mathcal{L}$ introduced in \cite{ACG22}, in which the residue field $\rf$ is expanded by the quotients $\rf^{\times}/(\rf^{\times})^n$ for each $n\in \mathbb{N}$ (see Definition \ref{language}). 
In this setting, the sorts of the residue field and the value group are orthogonal to each other and are stably embedded in a stronger sense (see Remark \ref{stabembedded_with_control_of_parameters}). In the language $\mathcal{L}$, we first give an algebraic characterization of residually dominated types, extending the results of \cite{EHS23} on stably dominated types in algebraically closed valued fields.

We fix a complete theory of a henselian valued field of equicharacteristic zero with a universal model $\U$ in the language $\mathcal{L}$. We denote by $\Ualg$ its algebraic closure, viewed as a model of $\ACVF$, the theory of algebraically closed valued fields of equicharacteristic zero in the language $\mathcal{L}_0$ of geometric sorts.

\begin{repcor}{algebraic_resdom}

 Let $C$ be a field, and $a$ be a tuple in the valued field sort of a henselian valued field of equicharacteristic zero. Let $L$ be the definable closure of $Ca$ in the valued field sort. Assume that $L$ is a regular extension of $C$. Then the following are equivalent:

\begin{itemize}
\item[(i)] $\tpp(a/C)$ is residually dominated.
\item[(ii)] $L$ is an unramified extension of $C$ and $L$ has the separated basis property over $C$. 
\end{itemize}
\end{repcor}
As a consequence, we relate residual domination and stable domination. 

\begin{repthm}{stabdom_eqv_resdom}
    Let $a$ be a tuple in the valued field sort of $\U$ and $C$ be a subfield with $C=\acl(C)$. Then, the following are equivalent:
\begin{enumerate}
    \item The $\mathcal{L}$-type $\tpp(a/C)$ is residually dominated in the structure $\U$,
    \item The $\mathcal{L}_0$-type $\tpp_0(a/C)$ is stably dominated in the structure $\Ualg$.
\end{enumerate}
 
\end{repthm}

We note that one direction of this equivalence follows directly from \cite[Theorem 4.6]{EHS23}, where the valued field is assumed to have a bounded Galois group to ensure quantifier elimination. In our setting, similar to \cite{Vic22}, we rely on the quantifier elimination result from \cite{ACG22}, which does not require any additional assumptions. 


For algebraically closed valued fields, \textit{stably dominated groups} were introduced in \cite{HR19} to study definable groups in the structure. It was shown that stable domination can be witnessed by a group homomorphism into a stable group (see Fact \ref{homomorphismACVF}). In this paper, using Theorem \ref{stabdom_eqv_resdom}, we extend these results to definable groups in henselian valued fields of equicharacteristic zero, using residual domination. To ensure a well-behaved notion of genericity, we assume that the group has a \textit{strongly-$f$-generic} over some model $M$, meaning that it contains a global type such that no translation of it forks over $M$. For $\NTP$- theories, this holds for \emph{definably amenable groups}, which includes abelian groups (see \cite{MOS20}). We then say that a definable group is \textit{residually dominated}, if it has a residually dominated, strongly $f$-generic type over some model $M$.  We show that in $\NTP$-theories, if 
$G$ is a residually dominated group definable in the valued field sort, then its connected component embeds into a connected, stably dominated subgroup of an algebraic group. A key ingredient of our proofs is the results from \cite{MOS20} on definable groups in expansions of fields with an $\NTP$-theory.

\begin{repthm}{HomomorphismResidue} Assume that $T$ is $\NTP$. Let $(G,\cdot)$ be a definable residually dominated group in the valued field sort, with a strongly $f$-generic type over $M$. Then there exist an $\mathcal{L}_0$-definable group $\mathfrak{g}$ in the residue field sort and an $\mathcal{L}$-definable surjective group homomorphism $f: G_M^{00}\to \mathfrak{g}$ such that the strong $f$-generics of $G_M^{00}$ are dominated via $f$. Namely, for each strongly $f$-generic $p$ of $G_M^{00}$ and for tuples $a, b$ in $\U$ with $a \models p|M$, we have $\tpp(b/M f(a))\vdash \tpp(b/Ma)$ whenever $f(a)\forkindep_{\rf(M)}^{alg}\rf(Mb)$.

\end{repthm}
In the proof of Theorem \ref{HomomorphismResidue}, we find a connected stably dominated group $G_1$ into which the residually dominated group \emph{almost} embeds, meaning that there is a finite-to-one map into $G_1$. In the subsequent results, we show the universality and factoriality properties of $G_1$
\begin{repthm}{thm_acvf_enveloppe} Assume that $T$ is an $\NIP$. Let $(G,\cdot)$ be a residually dominated group in the valued field sort. Then, there exists an $\mathcal{L}_0$-type-definable group $G_1$, a definable group homomorphism $\iota: G^{00} \rightarrow G_1$ with finite kernel, with the following properties:
 \begin{enumerate}
        \item The group $G_1$ is, in $ACVF$, connected stably dominated, and the image of any generic of $G^{00}$ is generic, in the sense of $ACVF$, in $G_1$.
        \item For all $\mathcal{L}_0$-type-definable groups $H_1$, for all definable morphisms $f: G^{00} \rightarrow H_1$, there exist $\mathcal{L}_0$-type-definable $H'_1 \leq H_1$, $H_0 \lhd H'_1$ finite, and $\psi: G_1 \rightarrow H'_1/H_0$ such that $f$ factors through $H'_1$ and $\pi \circ f = \psi \circ \iota$, where $\pi : H'_1 \rightarrow H'_1 / H_0$ is the quotient map. In other words, the following diagram commutes:
    \end{enumerate}
\begin{center}
\begin{tikzcd} G^{00}\arrow[d, "f"] \arrow[r, "\iota"]& G_1 \arrow[d, "\psi"] \\
 H'_1\arrow[r, "\pi"]& H'_1 / H_0  \end{tikzcd}
\end{center}
Moreover, such a group $G_1$ is unique up to $ACVF$-definable isogeny.
\end{repthm}

\begin{repprop}{prop_functoriality} Assume $T$ is $\NIP$. Let $(G, \cdot)$, $(H, \cdot)$ be residually dominated definable groups, in the valued field sort. Let $f: G \twoheadrightarrow H$ be a surjective definable group homomorphism. Let $G_1$, $H_1$ be $\mathcal{L}_0$-definable groups, and $\iota_G: G^{00} \rightarrow G_1$, $\iota_H: H^{00} \rightarrow H_1$ be definable group homomorphisms satisfying the conclusions of Theorem \ref{thm_acvf_enveloppe}.

Then, there exists a finite normal subgroup $H_0 \lhd H_1$ and an $\mathcal{L}_0$-definable surjective (in ACVF) group homomorphism $\psi: G_1 \twoheadrightarrow H_1 / H_0$ such that the following diagram commutes:

\begin{center}
\begin{tikzcd} G^{00}\arrow[d, "f"] \arrow[rr, "\iota_G"]&& G_1 \arrow[d, "\psi"] \\
 H^{00} \arrow[r, "\iota_H"]& H_1 \arrow[r] & H_1 / H_0  \end{tikzcd}
\end{center}

\end{repprop}

We note that the equivalence between residual domination and stable domination was recently proved in \cite{KRV24} within a more general framework involving imaginaries and henselian fields with additional structure. Building on these results, we aim to extend our findings to interpretable groups in henselian valued fields with additional structures.

The structure of the paper is as follows. In Section \ref{Preliminaries}, we provide preliminaries on henselian valued fields, stable domination. Moreover we recall some results on definable groups in tame settings. In Section \ref{Residual Domination}, we present our results on residually dominated types, and in Section \ref{Residually dominated groups}, we introduce residually dominated groups and present our main results related to them.

\paragraph{Acknowledgements.}
The first author thanks her supervisor, Deirdre Haskell, for valuable guidance and support; parts of this work originates from the author’s doctoral thesis. Both authors thank Martin Hils and Silvain Rideau-Kikuchi for insightful discussions and for pointing out errors in earlier drafts.
\section{Preliminaries}\label{Preliminaries}
In this section, we review basic facts about henselian valued fields of equicharacteristic zero, as well as stable domination and model theory of groups. We assume that the reader is familiar with the basics of model theory of valued fields and refer to \cite{Mar18} for the subject's background.

\subsection{Henselian Valued Fields of Equicharacteristic Zero}

Given a henselian valued field $(K, v)$ of equicharacteristic zero, let $\Gamma$ denote its value group and $\rf$ its residue field. We expand $\Gamma$ to $\Gamma_{\infty}$ by adjoining the symbol $\infty$, where $\val(0) = \infty$, and for all $\gamma \in \Gamma$, we set $\gamma < \infty$ and $\gamma + \infty = \infty + \gamma = \infty$. Define $\rv^{\times} := K^{\times} / (1 + \mathfrak{m})$, where $\mathfrak{m}$ is the unique maximal ideal of the valuation ring $\mathcal{O}$ of $K$. Then, $\rf^{\times}\subseteq \rv^{\times}$. The valuation map induces a well-defined surjective group homomorphism $\rv^{\times}$, sending $a(1+\mathfrak{m})$ to $\val(a)$. In particular, there is a short exact sequence of abelian groups:
$$
1 \longrightarrow \rf^{\times}
 \xrightarrow{\mathrm{id}} \rv^{\times}
 \xrightarrow{\val_{rv}} \Gamma
 \longrightarrow 0
$$

After adding $0_{\rv}$ to $\rv$ and setting $\val(0_{\rv}) = \infty$, this extends to a short exact sequence of monoids:
$$
1 \longrightarrow \rf
 \xrightarrow{\mathrm{id}} \rv
 \xrightarrow{\val_{rv}} \Gamma_{\infty}
 \longrightarrow 0
$$

Our main results are presented in the language of valued fields in which the residue field is expanded by the sorts $\rf/(\rf^{\times})^{n}$, as introduced in \cite{ACG22}. These are referred to as \emph{power residue sorts} in \cite{Vic22}.
\begin{defn}\label{language} \begin{enumerate}
\item Define $\mathcal{L}_{val}= (\vf, \rf,\vg, \res,\val)$ to be the language of valued fields where $\vf$ and $\rf$ are equipped with the language of rings, $\Gamma$ is equipped with the language of ordered abelian groups together with the symbol $\infty$. Here, $\val: \vf \to \vg$ and $\res: \vf\to \rf $ are the valuation and the residue maps, respectively; we interpret $\val(0):=\infty$ and $\res(a):=0$ whenever $a\not\in\mathcal{O}$.

\item Define $\mathcal{L}$ to be the expansion of $\mathcal{L}_{val}$ by the \textit{power residue sorts} $\mathcal{A}_n= \rf^{\times}/(\rf^{\times})^n \cup\{ \textbf{0}_n \}$. For each $n\in \mathbb{N}$, we add the maps $\pi_n: \rf \to \mathcal{A}_n $ where $\pi_n$ is the projection map on $\rf^{\times}$ and $\pi_n(0):=\textbf{0}_n$. We also add the maps  $\res^n: \vf \to \mathcal{A}_n$ defined as follows: if $\val(a)\in n\Gamma$, choose $b$ with $\val(a)=n\val(b)$ and set $\res^n(a) = \pi_n(\res(\frac{a}{b^n}))$; otherwise set $\res^n(a):=0_n$.

\item Define $\mathcal{L}_{\rv} = (\rv, \rf,\vg, \A, \val_{rv}, \iota, (\rho_n)_{n \in \mathbb{Z}}, (\pi_n)_{n \in \mathbb{N}}$ as the language of short exact sequences of monoids. Here, the sort $\Gamma$ is in the language of ordered groups expanded by $\infty$. The sorts $\rv$ and $\rf$ have the language of monoids $\{\cdot, 1, 0\}$, with the sort $\rf$ further expanded by the sorts $\A$ as described in part $(2)$.  The map $\iota : \rf \to \rv$ is the natural embedding, and $\val_{rv} : \rv \to \Gamma$ is the induced valuation map on $\rv$. For $a \in \rv$, the function $\rho_n$ is defined as follows: if $a \in \val_{rv}^{-1}(n\Gamma)$, then $\rho_n(a) = \rho_n(ab^{-1})$ for some $b \in \rv$ with $\val_{rv}(b) = \val_{rv}(a)$; otherwise, $\rho_n(a) := 0$.

\end{enumerate}

\end{defn}
Here, $\res^n(a)=\rho_n(rv(a))$, and $\res^n$ (respectively, $\rho_n$) is a group homomorphism on $\val^{-1}(n\Gamma)$ (respectively, on $\val_{rv}^{-1}(n\Gamma)$). 

\begin{notation}Let $\rf_{\A}$ denote the $2$-sorted structure $(\rf,\A)$, equipped with the ring language on $\rf$ and the maps $\pi_n$ for each $n\in \mathbb{Z}_{>0}$.\end{notation}
    


\begin{fact}\label{theorem_5.15_ACG22} Let $T$ be a complete theory of henselian valued fields of equicharacteristic zero. Then every $\mathcal{L}_{val}$ formula $\varphi(x,y,z)$ where $x$, $y$ and $z$ are variables belonging to $\vf$, $\vg$, $\rf$, respectively, is equivalent to an $\mathcal{L}$-formula  $\psi(t_1(x),\cdots, t_n(x), y,z)$ where $t_i$'s are one of the following forms: $\val(p(x))$, $\res(\frac{p(x)}{q(x)})$ or $\res^{n}(p(x))$ where $p$ and $q$ are polynomials with integer coefficients.
\end{fact}
As mentioned just before \cite[Theorem 2.23]{Vic22}, the following is a direct consequence of \cite[Theorem 5.15]{ACG22} 
\begin{fact}\label{QE_val_A}(\cite[Theorem 2.23, Corollary 2.24]{Vic22})
Let $T$ be a theory of henselian valued fields of equicharacteristic zero in the language $\mathcal{L}$. Then $T$ eliminates field quantifiers. Moreover, the sorts $\rf_{\mathcal{A}}$ and $\vg$ are purely stably embedded and orthogonal to each other.
\end{fact}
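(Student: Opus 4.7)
The plan is to derive both claims from the quantifier elimination theorem of Aschenbrenner--Cubides Kovacsics--Gehret (Theorem 5.15 of \cite{ACG22}) for the short exact sequence $1 \to \rf \to \rv \to \vg \to 0$ enriched with the power-residue quotients $\A_n$ and the maps $\pi_n, \rho_n$, composed with a classical relative quantifier elimination for henselian valued fields of equicharacteristic zero. The first step is to recall that in equicharacteristic zero, a henselian valued field admits relative quantifier elimination over $\rv$: every $\mathcal{L}_{val}$-formula with free variables in $\vf$ is $T$-equivalent to a formula of the form $\psi(\rv(f_1(\bar{x})), \ldots, \rv(f_k(\bar{x})))$, where the $f_i$ are polynomials over $\mathbb{Z}$ and $\psi$ is a formula in the $\rv$-sort (this is the Basarab--Kuhlmann style reduction, which in the equicharacteristic zero case essentially goes through henselianity plus Ax--Kochen--Ershov). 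Since the power residue maps $\res_n$ factor as $\rho_n \circ \rv$, the same reduction carries over to $\mathcal{L}$ relative to $\mathcal{L}_{\rv}$.

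Next, I would apply the ACG22 theorem to the resulting $\mathcal{L}_{\rv}$-formula in order to eliminate all remaining quantifiers ranging over $\rv$, leaving only quantifiers in the sorts $\rf_{\A}$ and $\vg$. The composition of the two reductions gives an $\mathcal{L}$-formula that is quantifier-free in the field sort, modulo quantifiers internal to $\rf_{\A}$ and $\vg$; this yields the field quantifier elimination claim. The key point is that the two reductions are compatible: after pushing down to $\rv$, the ACG22 theorem further decomposes the $\rv$-formulas into their residue-field and value-group components via the canonical maps $\iota$ and $\val_{rv}$.

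For orthogonality and stable embeddedness, I would extract from the structure of the ACG22 quantifier elimination that any formula whose free variables lie in $\rf_{\A} \cup \vg$ is, modulo the ambient theories of $\rf_{\A}$ and $\vg$, equivalent to a Boolean combination of pure $\rf_{\A}$-formulas and pure $\vg$-formulas, each with parameters drawn only from the respective sort. This simultaneously delivers orthogonality (definable subsets of $\rf_{\A}^n \times \vg^m$ are Boolean combinations of rectangles) and pure stable embeddedness (no external parameters from $\vf$ or $\rv$ are needed to define sets living entirely in one of the two sorts). The main technical obstacle is verifying this clean separation at the level of atomic $\rv$-formulas: one has to check that after applying $\iota$ and $\val_{rv}$, no residual mixed data links residue field and value group on the $\rv$-sort, so that the ACG22 elimination truly factors through the two orthogonal sorts; once this is verified, both orthogonality and stable embeddedness with control of parameters follow formally.
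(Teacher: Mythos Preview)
Your proposal is correct and follows essentially the same route the paper indicates: the paper does not give its own proof but records this as a fact cited from \cite{Vic22}, noting just before the statement that it is a direct consequence of \cite[Theorem~5.15]{ACG22}, which is precisely the reduction (relative field quantifier elimination down to $\rv$, followed by the ACG22 elimination of $\rv$-quantifiers into $\rf_{\mathcal A}$ and $\vg$) that you outline. Your extraction of orthogonality and pure stable embeddedness from the shape of the resulting formulas is also the intended mechanism, and matches the paper's Remark~\ref{stabembedded_with_control_of_parameters}.
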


\begin{rem}\label{stabembedded_with_control_of_parameters}
By Fact \ref{theorem_5.15_ACG22}, one can also see that the sorts $\rf_{\A}$ and $\Gamma$ are stably embedded in a stronger sense: if $D$ is a subset of $\rf_{\A}$ (respectively $\Gamma$) over a parameter set $C$ that lies in the sort $\vf$, then there exists an $\rf_{\A}$-formula (respectively, a $\Gamma$-formula) and some $u\in \dcl(C)$ such that $\psi(x, u)$ defines $D$.
\end{rem}

The next result, due to \cite{ACG22}, is restated in our setting.
\begin{fact}\label{QE_RV}(\cite[Corollary 4.8]{ACG22})  The structure $(\rv, \Gamma, \rf)\cup \{\A, \iota, \val, (\rho_{n})_{n\in\mathbb{N}} \}$ eliminates the $\rv$-quantifiers. Moreover, the sorts $\rf_{\A}$ and $\Gamma$ are purely stably embedded and orthogonal to each other. \end{fact}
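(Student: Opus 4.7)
The plan is to establish this fact in two stages: first the relative quantifier elimination for $\rv$, then deduce the pure stable embeddedness and orthogonality. Since the statement is about a pure short exact sequence $1 \to \rf^\times \to \rv^\times \to \Gamma \to 0$ enriched by the quotients $\A_n = \rf^\times / (\rf^\times)^n$ and the maps $\iota$, $\val_{rv}$, $\pi_n$, $\rho_n$, the argument is a standard back-and-forth on the level of the sequence, the nontrivial content being that the maps $\rho_n$ provide exactly the data needed to pin down an $\rv$-element's type over its image in $\rf_\A$ and $\Gamma$.

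First, I would fix two sufficiently saturated models $\mathcal{M}, \mathcal{N}$ of the theory, and a partial $\mathcal{L}_{\rv}$-isomorphism $\sigma : A \to B$ between substructures closed under the language. The back-and-forth extension on the $\rf_\A$ side reduces to (known) quantifier elimination results for fields expanded by the power residue sorts, and on the $\Gamma$ side to quantifier elimination for ordered abelian groups (with $\infty$). The heart of the argument is the extension step at an $\rv$-element $a \in \rv(\mathcal{M}) \setminus A$.

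Second, the key lemma to prove is that the $\mathcal{L}_{\rv}$-type of an $\rv$-element $a$ over $A$ is completely determined by the following invariants: the type of $\val_{rv}(a)$ over $\Gamma(A)$; the element $\iota^{-1}(a) \in \rf$ in case $\val_{rv}(a) = 0$; and the collection $(\rho_n(a))_{n \geq 1}$ as elements of the $\A_n$'s over $\rf_\A(A)$. The justification is exactly the exact sequence: once $\val_{rv}(a)$ is fixed, $a$ is determined up to multiplication by $\rf^\times$, and the residue of such multiplications modulo $(\rf^\times)^n$ is tracked by $\rho_n$ precisely when $\val_{rv}(a) \in n\Gamma$. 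Given this lemma, saturation of $\mathcal{N}$ yields a realization $a' \in \rv(\mathcal{N})$ of the $\sigma$-transported data, provided one has already extended the back-and-forth on $\rf_\A$ and $\Gamma$ to realize it; a standard test for quantifier elimination (every isomorphism between substructures lifts) then gives relative QE.

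Third, the orthogonality and pure stable embeddedness of $\rf_\A$ and $\Gamma$ follow formally from relative QE. Any $\mathcal{L}_{\rv}$-formula in variables from $\rf_\A$ and $\Gamma$ alone, after eliminating $\rv$-quantifiers, becomes a Boolean combination of atomic formulas on $\rf_\A$ and $\Gamma$; no atomic formula mixes the two sorts, so any definable subset of $\rf_\A^n \times \Gamma^m$ is a finite Boolean combination of rectangles $Y_i \times Z_i$ with $Y_i \subseteq \rf_\A^n$ and $Z_i \subseteq \Gamma^m$, which gives both orthogonality and that each sort is stably embedded with definable sets parametrized internally (the \emph{pure} in pure stable embeddedness). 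The main obstacle is the structural lemma on $\rv$-types: one must verify that the invariants $(\val_{rv}, \iota, (\rho_n)_n)$ really are a complete set, which reduces to checking that for each $n$, two $\rv$-elements of the same value and same $\pi_n$-residue after normalization can be exchanged by an $\mathcal{L}_{\rv}$-automorphism fixing the $\rf_\A$ and $\Gamma$ parts. This is precisely the functional equation that the maps $\rho_n$ were designed to enforce, and it is the reason the language was enriched with the sorts $\A_n$ in the first place.
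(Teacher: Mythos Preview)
The paper does not prove this statement at all; it is recorded as a \emph{Fact} with a citation to \cite[Corollary~4.8]{ACG22}, so there is no in-paper argument to compare against. Your outline is the standard strategy (and essentially the one used in the cited reference): a back-and-forth for relative elimination of the $\rv$-quantifier, followed by the formal deduction of orthogonality and pure stable embeddedness from the shape of the resulting quantifier-free formulas.

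That said, your ``key lemma'' is stated a bit too loosely to go through as written. The invariants you list for an $\rv$-element $a$ over a substructure $A$ --- namely $\tpp(\val_{rv}(a)/\Gamma(A))$, the value $\iota^{-1}(a)$ when $\val_{rv}(a)=0$, and the tuple $(\rho_n(a))_n$ --- do not by themselves determine the quantifier-free type of $a$ over $A$ when $\rv(A)$ is nontrivial. The atomic diagram of $A \cup \{a\}$ also involves terms of the form $a \cdot b$ for $b \in \rv(A)$: one must control $\iota^{-1}(a\cdot b^{-1})$ whenever $\val_{rv}(a) \in \val_{rv}(\rv(A))$, and one must control $\rho_n(a\cdot b)$ whenever $\val_{rv}(a\cdot b) \in n\Gamma$ even though $\val_{rv}(a)$ itself may not lie in $n\Gamma$ (so that $\rho_n(a)$ is the default $\mathbf{0}_n$ and carries no information). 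The fix is routine --- either normalize by an element of $\rv(A)$ to reduce to the case $\val_{rv}(a) \notin \Gamma(A)$ or $\val_{rv}(a)=0$, and record the $\rho_n$-data for all products $a\cdot b$ with $b$ ranging over coset representatives of $n\Gamma$ in $\Gamma(A)$ --- but the lemma as you phrased it would fail without this amendment. Once patched, the rest of your plan (lifting via saturation, then reading off orthogonality from the absence of atomic formulas linking $\rf_\A$ and $\Gamma$) is correct.
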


The following is a standard characterization of henselianity. 

\begin{fact}\label{henselianity} A valued field $K$ is henselian if and only if every valued field automorphism of $K$ extends to a valued field automorphism of its field-theoretic algebraic closure $K^{alg}$.
\end{fact}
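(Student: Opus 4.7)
The plan is to derive Fact \ref{henselianity} from the standard valuation-theoretic equivalence: $(K,v)$ is henselian if and only if $v$ extends uniquely to the algebraic closure $K^{alg}$. I fix some extension $w$ of $v$ to $K^{alg}$ via Chevalley's extension theorem and view $(K^{alg},w)$ as the ambient valued field throughout.

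For the forward direction, assume $(K,v)$ is henselian, so $w$ is the unique extension of $v$ to $K^{alg}$. Given $\sigma \in \Aut(K,v)$, lift $\sigma$ to a field automorphism $\tilde\sigma \in \Aut(K^{alg})$ using the universal property of the algebraic closure (Zorn's lemma applied to partial extensions). Then $w \circ \tilde\sigma^{-1}$ is a valuation on $K^{alg}$ whose restriction to $K$ equals $v \circ \sigma^{-1} = v$, since $\sigma$ preserves $v$. By uniqueness of the extension, $w \circ \tilde\sigma^{-1} = w$, so $\tilde\sigma \in \Aut(K^{alg},w)$ extends $\sigma$ as a valued field automorphism, as required.

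For the converse, I argue by contrapositive. If $(K,v)$ is not henselian, pick a second extension $w' \neq w$ of $v$ to $K^{alg}$. By the classical conjugacy theorem for extensions of a valuation in a normal algebraic extension, there exists $\tau \in \Gal(K^{alg}/K)$ with $w' = w \circ \tau$, so $\tau$ does not preserve $w$. Because $\tau$ fixes $K$ pointwise, $\tau$ is a field-theoretic lift to $K^{alg}$ of the valued field automorphism $\mathrm{id}_K \in \Aut(K,v)$, and it fails to be valuation-preserving. This witnesses a failure of the hypothesis — read, as it must be for non-vacuousness, in the strong form that every field-theoretic extension to $K^{alg}$ of a valued field automorphism of $K$ is itself a valued field automorphism. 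Hence $(K,v)$ must be henselian.

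The main obstacle is conceptual rather than computational: the converse would be vacuously true under a purely existential reading of ``extends to'' (since $\mathrm{id}_K$ trivially lifts to $\mathrm{id}_{K^{alg}}$), so one must invoke the conjugacy of extensions under $\Gal(K^{alg}/K)$ and use the hypothesis to control \emph{every} lift of $\sigma$; applied to $\sigma = \mathrm{id}_K$, this forces the decomposition group of $w$ to equal all of $\Gal(K^{alg}/K)$, which is exactly the uniqueness criterion for henselianity. Modulo this reading, both directions are short applications of standard valuation theory.
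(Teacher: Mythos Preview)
The paper states this as a Fact without proof, so there is no argument in the paper to compare against. Your forward direction is correct and standard: henselianity gives uniqueness of the extension $w$ to $K^{alg}$, so any field-theoretic lift $\tilde\sigma$ of $\sigma\in\Aut(K,v)$ satisfies $w\circ\tilde\sigma^{-1}=w$ automatically.

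You are also right to flag the converse. Under the plain existential reading the right-hand side in fact holds for \emph{every} valued field, and not only via $\mathrm{id}_K\mapsto\mathrm{id}_{K^{alg}}$: for arbitrary $\sigma\in\Aut(K,v)$, pick any field-theoretic lift $\tilde\sigma_0$; then $w\circ\tilde\sigma_0$ is an extension of $v$ to $K^{alg}$, hence equals $w\circ\tau$ for some $\tau\in\mathrm{Gal}(K^{alg}/K)$ by conjugacy, and $\tilde\sigma_0\circ\tau^{-1}$ is a valuation-preserving lift of $\sigma$. So the equivalence, read literally, cannot hold. Your strong reading---every field-theoretic lift of a valued-field automorphism of $K$ is already a valued-field automorphism of $(K^{alg},w)$---is precisely the condition equivalent to henselianity, and your converse (apply it to $\mathrm{id}_K$ to force the decomposition group of $w$ to be all of $\mathrm{Gal}(K^{alg}/K)$) is correct. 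One wording nit: under the existential reading the converse implication is not ``vacuously true'' but rather \emph{false}, since its hypothesis holds for all $(K,v)$ while its conclusion does not. The paper only invokes this fact in the forward direction (in the proof of Lemma~\ref{resolution_partial}), so the looseness of the converse does not affect anything downstream.
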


A field (with possibly additional structure) is said to be \textit{algebraically bounded} if its model-theoretic algebraic closure is contained in its field-theoretic algebraic closure.

\begin{fact}\label{algebraically_bounded}(\cite{vdD89})
If $(K,v)$ is a pure henselian field of characteristic zero, then $K$ is algebraically bounded.
\end{fact}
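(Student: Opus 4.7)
The plan is to show that for every parameter set $A \subseteq K$ and every $c \in \acl(A)$ lying in the valued field sort, $c$ is algebraic over $A$ in the field-theoretic sense. Passing to a sufficiently saturated and strongly homogeneous elementary extension $\K$ of $K$, I would argue by contrapositive: assume $c \notin A^{alg}$, so $c$ is transcendental over the field-theoretic algebraic closure $L := A^{alg}$ computed inside $\K$. The goal is then to exhibit infinitely many $A$-conjugates of $c$ inside $\K$, contradicting $c \in \acl(A)$.

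The first step is to classify the transcendental extension $L(c)/L$ according to its valuation-theoretic behavior: either $c$ is value-transcendental (so $v(c)$ realizes a new cut in the divisible group $v(L)$), or residue-transcendental (so the residue of $c/d$ for some $d \in L^\times$ of the same valuation is transcendental over $\res(L)$), or $L(c)/L$ is immediate. In each case I would use saturation of $\K$ to produce an infinite family $\{c_i\}_{i<\omega} \subseteq \K$ sharing all the data controlling the $\mathcal{L}_{val}$-type of $c$ over $L$ yet pairwise distinct: e.g.\ infinitely many valuations realizing the same cut over $v(L)$, or infinitely many residues transcendental over $\res(L)$, or (in the immediate case) distinct realizations of a pseudo-Cauchy sequence over $L$ having $c$ as a limit. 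Each $c \mapsto c_i$ then extends to a field isomorphism $L(c) \to L(c_i)$ over $L$ that is automatically a valued field isomorphism.

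Next I would propagate each such partial isomorphism to a valued-field automorphism of $\K$ fixing $A$. Since $L(c)^{alg}$ and $L(c_i)^{alg}$ are algebraic over $L$, the valuation extends uniquely up to conjugation, and by Fact \ref{henselianity} the partial isomorphism extends to a valued-field isomorphism of $L(c)^{alg}$ with $L(c_i)^{alg}$ inside $\K$. At this point field-quantifier elimination in $\mathcal{L}$ (Fact \ref{QE_val_A}), combined with the orthogonality and stable embeddedness of $\rf_{\A}$ and $\vg$, promotes this valued-field partial isomorphism to an elementary map; by homogeneity of $\K$ it then extends to an automorphism of $\K$ over $A$. This produces infinitely many $A$-conjugates of $c$ and the desired contradiction.

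The main obstacle is the immediate case, where no new data appears in $\vg$ or $\rf$, and one must argue by genuinely model-theoretic input — specifically that in a saturated henselian valued field of equicharacteristic zero, a transcendental immediate extension of $L$ inside $\K$ is realized by infinitely many elements. This is where pureness of the valued field matters: without additional structure, the $\mathcal{L}$-type of $c$ over $L$ in the immediate case is determined by a pseudo-Cauchy sequence in $L$ without a limit in $L$, and saturation forces infinitely many (field-conjugate but distinct) limits inside $\K$. Once this is in place, the three pieces combine to give algebraic boundedness.
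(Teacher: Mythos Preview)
The paper does not prove this statement; it is recorded as a Fact and attributed to \cite{vdD89}, so there is no in-paper argument to compare against. Van den Dries's original proof is structured differently from your sketch: rather than the AKE-style trichotomy on a single transcendental element, it deduces algebraic boundedness from the shape of one-variable definable sets coming from relative quantifier elimination.

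Your approach is reasonable in outline, but the step where field quantifier elimination ``promotes this valued-field partial isomorphism to an elementary map'' is a genuine gap as written. Fact~\ref{QE_val_A} only eliminates \emph{field} quantifiers: a formula over $L$ becomes a $\rf_{\A}\cup\Gamma$-formula applied to terms $\val(p(c))$, $\res(p(c)/q(c))$, $\res_n(p(c))$. A valued-field isomorphism $L(c)\to L(c_i)$ over $L$ preserves the \emph{values} of the first two kinds of terms, but it need not guarantee that the tuples $(\val(p(c)))_p$ and $(\val(p(c_i)))_p$ have the same $\Gamma$-type over $\Gamma(L)$, nor that the $\res_n$-data match. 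Concretely, in the value-transcendental case two elements $c,c'$ whose valuations realize the same cut over $v(L)$ can still be distinguished by a $\Gamma$-formula (e.g.\ divisibility of $v(c)$ by some $n$ in $\Gamma(\K)$), so $L(c)\cong L(c')$ as valued fields while $\tpp(c/L)\neq\tpp(c'/L)$. Orthogonality and stable embeddedness of $\rf_{\A}$ and $\Gamma$ do not close this gap: they control \emph{where} the parameters for the induced structure live, not whether two specific elements of $\Gamma$ or $\rf_{\A}$ have the same type. The argument can be repaired---for instance by choosing the $c_i$ inside $c\cdot(1+\mathfrak{m}_{\K})$ so that all the $\val$-, $\res$-, and $\res_n$-terms literally coincide---but this requires a more careful construction than the one you outline.
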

The following is a well-known property of henselian valued fields of equicharacteristic zero.
\begin{fact}\label{maximal_immediate_model} Let $T$ be a theory of henselian valued fields of equicharacteristic zero and $M\models T$. Then $M$ has a unique maximal immediate extension and this extension is elementary over $M$. 
\end{fact}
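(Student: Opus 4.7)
The plan is to establish existence, uniqueness, and elementarity in turn, relying on classical valued-field results.

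For existence, observe that any immediate extension $N$ of $M$ has cardinality bounded by a function of $|M|$: every element of $N \setminus M$ is a pseudo-limit of a pseudo-Cauchy sequence from $M$ indexed by a coinitial subset of $\Gamma(M)$, which bounds the number of isomorphism types over $M$. Hence, up to $M$-isomorphism, the immediate extensions of $M$ form a set. Ordering them by inclusion, every chain has an upper bound given by its union (which is immediate, as both $\rf$ and $\Gamma$ commute with directed unions), and Zorn's lemma delivers a maximal element $N$.

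For uniqueness, invoke Kaplansky's theorem. When the residue field has characteristic zero, the hypothesis ``algebraically maximal Kaplansky'' on $M$ holds automatically, so any two maximal immediate extensions are isomorphic over $M$. The underlying mechanism is a back-and-forth using the classification of pseudo-Cauchy sequences: those of transcendental type over $M$ give rise to a unique immediate extension adjoining a pseudo-limit, while those of algebraic type have no pseudo-limit outside $M$ once $M$ is henselian in equicharacteristic zero. For elementarity, apply the Ax--Kochen--Ershov principle: an extension of henselian valued fields of equicharacteristic zero is elementary provided the induced residue-field and value-group extensions are. By definition of immediate extension, $\rf(M) = \rf(N)$ and $\Gamma(M) = \Gamma(N)$, so the latter are trivially elementary. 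Combined with Fact \ref{QE_val_A}, this gives $M \preceq N$ in the language $\mathcal{L}$.

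The only point that deserves care is the enrichment to $\mathcal{L}$: a priori the additional sorts $\A_n$ and maps $\res_n$ could distinguish $N$ from $M$, but each $\A_n$ is an interpretable quotient of $\rf^{\times}$, which is preserved since $\rf(M) = \rf(N)$, and $\res_n$ is determined by $\val$, $\res$, and $\pi_n$. So no new data appear over the immediate extension, and the classical statements transfer from $\mathcal{L}_{val}$ to $\mathcal{L}$ without modification.
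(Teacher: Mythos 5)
The paper records this as a background \emph{Fact} without proof, so there is no in-paper argument to compare against; your proof is the standard one (Krull--Zorn for existence, Kaplansky for uniqueness, relative quantifier elimination / Ax--Kochen--Ershov for elementarity), and it is essentially correct. Two small points deserve tightening. First, to invoke the AKE principle you need \emph{both} $M$ and its maximal immediate extension $N$ to be henselian; $M$ is so by hypothesis, and $N$ is henselian because the henselization of $N$ is an immediate algebraic extension, so maximality forces $N$ to equal its henselization --- you use this implicitly but never say it. Second, your phrasing ``the hypothesis `algebraically maximal Kaplansky' holds automatically'' conflates two conditions: what is automatic in residue characteristic zero is Kaplansky's \emph{hypothesis~A} (which is all that his uniqueness theorem requires), whereas \emph{algebraic maximality} is not automatic but rather a consequence of $M$ being henselian of equicharacteristic zero; it is the former that you should cite for uniqueness, and the latter that explains why pseudo-Cauchy sequences over $M$ without a limit in $M$ are all of transcendental type. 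With these clarifications the argument is complete, including your observation that the $\mathcal{A}_n$-sorts and $\res_n$-maps are interpretable from the residue field data and therefore add nothing new over an immediate extension.
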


\subsection{Separated Extensions} Let $(C,v)\subseteq (L,v)$ be valued fields, let $V$ be a finite dimensional $C$-vector subspace of $L$. We say $V$ is \textit{separated over $C$} if there exists a $C$-basis $\vec{b}=(b_1,\dots,b_n)$ of $V$ such that for all $c_1,\dots,c_n \in C$, $$\val(\underset{i}{\sum}c_ib_i)=\minn\lbrace v(c_i)+v(b_i)\rbrace_{i}.$$
In this case, $\vec{b}$ is called a \textit{separated basis}. Such a basis is \textit{good} if, in addition, for all $1\leq i,j\leq n$, either $\val(b_i)=v(b_j)$ or $v(b_i)+\Gamma(C)\not=\val(b_j)+\Gamma(C).$ 

    Finally, we say $L$ has \textit{the separated basis property over $C$} (or equivalently, the field extension \textit {$L/C$
is separated}) if every finite-dimensional $C$-vector subspace of $L$ has a separated basis over $C$. We say $L$ has \textit{the good separated basis property over} $C$, if every finite-dimensional $C$-vector subspace of $L$ has a good separated basis over $C$.

In fact, the separated basis property always implies the good separated basis property.
\begin{rem}\label{separatedness_implies_good}
Let $(C,\val)\subseteq (L,\val)$ be valued fields, and suppose that $L$ has the separated basis property over $C$. Then $L$ also has the good separated basis property over $C$.
\end{rem}
\begin{proof} We will prove by induction on the dimension of a finite dimensional $C$-vector subspace of $L$. Let $U$ be an $n$-dimensional $C$-vector subspace with a good separated basis $b_1, \dots, b_n$ and let $b \in L \setminus U$. By assumption, we may assume that $(b_1, \dots, b_n, b)$ is separated over $C$. 

If $\val(b) \not= \val(u)$ for any $u \in U$, then in particular $\val(b) \ne \val(c) + \val(b_i)$ for any $c \in C$ and any $i \leq n$; thus $(b_1, \dots, b_n, b)$ remains a good basis. 

If $\val(b) = \val(u)$ for some $u \in U$, then there exists $i_0 \leq n$ and $c \in C$ such that $\val(b) = \val(c) + \val(b_{i_0})$. After replacing $b$ with $c^{-1}b$, we may assume that $\val(b) = \val(b_{i_0})$, and again $(b_1, \dots, b_n, b)$ is a good basis. \end{proof}

\begin{fact}\label{Delon2}(\cite[Corollary 7]{Del88}) Any algebraic extension of a henselian field is separated.
    
\end{fact}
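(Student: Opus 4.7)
The plan is to reduce to the finite-dimensional setting. Any finite-dimensional $C$-subspace $V$ of an algebraic extension $L/C$ is contained in a finite subextension $K \subseteq L$ generated by a $C$-basis of $V$, so it suffices to exhibit a good separated basis structure on every finite extension $K/C$ and then extract a separated basis for $V \subseteq K$ from it. The key structural input is that in equicharacteristic zero, henselianity of $C$ forces the fundamental equality $[K:C] = ef$ for every finite $K/C$, with $e = [\Gamma(K):\Gamma(C)]$ and $f = [\rf(K):\rf(C)]$: by Ostrowski's theorem, the defect of a finite extension of a henselian valued field is a power of the residue characteristic, which is trivial in equicharacteristic zero.

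Given the fundamental equality, I would construct a good separated $C$-basis of $K$ explicitly. Pick $\gamma_1, \ldots, \gamma_e \in K^{\times}$ whose valuations represent the $e$ distinct cosets of $\Gamma(C)$ in $\Gamma(K)$, and $\delta_1, \ldots, \delta_f \in \mathcal{O}_K$ whose residues form a $\rf(C)$-basis of $\rf(K)$; by the fundamental equality, $\{\gamma_i \delta_j\}$ is a $C$-basis of $K$. To verify separatedness, first observe that for fixed $i$ and coefficients $c_{ij} \in C$, one has $v(\sum_j c_{ij} \delta_j) = \min_j v(c_{ij})$: dividing through by a $c^{*}$ of minimal valuation among the $c_{ij}$, the residue of $\sum_j (c_{ij}/c^{*}) \delta_j \in \mathcal{O}_K$ is a nontrivial $\rf(C)$-linear combination of the $\overline{\delta_j}$, hence nonzero by linear independence. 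Since the $v(\gamma_i)$ lie in pairwise distinct cosets of $\Gamma(C)$, the values $v(\gamma_i) + \min_j v(c_{ij})$ are themselves pairwise distinct, so the overall minimum is attained uniquely in $\sum_{i,j} c_{ij} \gamma_i \delta_j$, yielding the desired separatedness.

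The final step is to extract a separated basis of the subspace $V \subseteq K$ from the good separated basis of $K$: starting from any $C$-basis of $V$, expand its elements in the $\{\gamma_i \delta_j\}$ coordinates, and apply a Gaussian-elimination procedure over $C$ to produce a new basis of $V$ whose ``leading pairs'' (those $(i,j)$ at which the valuation is attained) are pairwise distinct. This subspace reduction is the main obstacle, since collisions among leading pairs must be resolved without losing valuation control; the fact that $\{\gamma_i \delta_j\}$ is \emph{good} separated---valuations in the same coset of $\Gamma(C)$ only when equal---is exactly what makes this elimination compatible with the valuation and produces a separated basis of $V$.
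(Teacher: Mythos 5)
Your strategy---reduce to finite subextensions $K/C$, invoke the fundamental equality $[K:C]=ef$ via Ostrowski in equicharacteristic zero, and exhibit the explicit basis $\{\gamma_i\delta_j\}$---is sound, and the separatedness verification for that basis is carried out correctly. The equicharacteristic zero hypothesis you appeal to is essential and is part of the paper's blanket setting: a finite extension that admits a separated basis is forced to be defectless (group the basis elements by the $\Gamma(C)$-coset of their valuation; the class sizes are bounded by $f$ by residue independence and the number of classes by $e$, giving $[K:C]\le ef$), so the statement would be false over a henselian field carrying a defect extension.

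The gap is in your final step. Passing from a separated basis of $K$ to a separated basis of a finite-dimensional subspace $V\subseteq K$ is exactly the content of Fact \ref{Delon1} (Delon's Lemma 5), already stated in the paper, and you should simply cite it rather than re-derive it. As written, your ``Gaussian elimination'' sketch is not yet a proof: the leading pair of an element is not unique (ties can occur among the $\delta_j$ for a fixed $\gamma_i$), it is not shown that pairwise distinct leading pairs imply separatedness of the new basis, and no termination argument is given over a general, non-discrete value group, where one cannot run an unbounded descent on valuations. Note also that Fact \ref{Delon1} applies to any separated basis, so the extra care you take to make $\{\gamma_i\delta_j\}$ good is not needed for the subspace reduction, though it is harmless. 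Since the paper states Fact \ref{Delon2} as a citation without proof, there is no internal argument to compare against; with the appeal to Fact \ref{Delon1} in place of the elimination sketch, your route via defectlessness is a valid proof.
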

\begin{fact}\label{Delon1}(\cite[Lemma 5]{Del88}) Let $(C,v)\subseteq (L,v)$ be valued fields. Suppose $U\subseteq V$ are finite-dimensional $C$-vector subspaces of $L$, and $V$ is separated over $C$. Then $U$ is separated over $C$.
\end{fact}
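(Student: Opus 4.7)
The plan is to apply the universality clause (2) of Theorem \ref{thm_acvf_enveloppe} to the composite $\iota_H\circ f:G^{00}\to H_1$, treating $H_1$ as an arbitrary ACVF-type-definable target, and then verify that the subgroup $H'_1\leq H_1$ produced by universality equals all of $H_1$ and that the resulting $\psi$ is surjective. First I would check that $f$ restricts to a homomorphism $G^{00}\to H^{00}$, so the composite makes sense: $f^{-1}(H^{00})$ is a type-definable bounded-index subgroup of $G$ and hence contains $G^{00}$, giving $f(G^{00})\subseteq H^{00}$. The sharper standard $\NIP$ fact that $f(G^{00})=H^{00}$ for a surjective definable homomorphism will also be useful; it follows from the usual analysis of connected components applied to $G^{00}\cdot\ker(f)$.

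Applying clause (2) of Theorem \ref{thm_acvf_enveloppe} to $\iota_H\circ f:G^{00}\to H_1$ produces an ACVF-type-definable subgroup $H'_1\leq H_1$, a finite normal subgroup $H_0\lhd H'_1$, and an ACVF-definable morphism $\psi:G_1\to H'_1/H_0$ satisfying $\iota_H(f(G^{00}))\subseteq H'_1$ and $\pi\circ\iota_H\circ f=\psi\circ\iota_G$, where $\pi:H'_1\to H'_1/H_0$ denotes the quotient. The crux of the argument is then to show $H'_1=H_1$. By clause (1) of Theorem \ref{thm_acvf_enveloppe} applied to $H$, the image under $\iota_H$ of any generic of $H^{00}$ is generic in the connected stably dominated group $H_1$; moreover, pushing forward along the surjection $f$ sends generics of $G^{00}$ to generics of $H^{00}$. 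Thus $\iota_H(f(G^{00}))$ contains realizations of the generic type of $H_1$. Invoking the classical principle that a connected stably dominated group is generated as an abstract group by two independent realizations of its generic type, we conclude $H_1=\langle\iota_H(f(G^{00}))\rangle\subseteq H'_1$, hence $H'_1=H_1$ and $H_0\lhd H_1$.

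The surjectivity of $\psi$ is then immediate from the same principle: $\psi(G_1)\supseteq\psi(\iota_G(G^{00}))=\pi(\iota_H(f(G^{00})))$, which contains the image under $\pi$ of a generic of $H_1$ and therefore generates $H_1/H_0$. I expect the main technical obstacle to be the generic-generation statement for connected stably dominated groups in $\ACVF$; this is a standard consequence of translation-invariance and stationarity of the generic in the stably dominated setting, as developed in \cite{HR19}, but may require careful checking in the pro-definable framework. A secondary delicate point is the compatibility between generics of $G^{00}$ in the $\NTP$-sense used to define residual domination and the ACVF-generics of $G_1$, which is exactly the content of clause (1) of Theorem \ref{thm_acvf_enveloppe} and can be cited verbatim.
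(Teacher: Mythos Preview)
Your proposal does not address the statement in question. Fact~\ref{Delon1} is a basic valuation-theoretic lemma (cited from Delon): given valued fields $C\subseteq L$ and finite-dimensional $C$-subspaces $U\subseteq V$ of $L$ with $V$ separated over $C$, one must show $U$ is separated over $C$. Your proposal, however, is a proof sketch of Proposition~\ref{prop_functoriality} (functoriality of the $\ACVF$-envelope for residually dominated groups). Nothing in your argument---Theorem~\ref{thm_acvf_enveloppe}, connected components, generics, stably dominated groups---is relevant to the separated-basis statement.

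For the record, the paper gives no proof of Fact~\ref{Delon1}; it is simply quoted from \cite{Del88}. A correct proof is elementary linear algebra over valued fields: starting from a separated basis $b_1,\dots,b_n$ of $V$, one shows by a Gaussian-elimination/exchange argument (using the ultrametric inequality at each step) that any $C$-subspace $U$ inherits a separated basis. Your write-up would need to be replaced entirely by such an argument, or by a citation.
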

As mentioned in \cite{Del88}, the transitivity of the separated basis property follows from Fact \ref{Delon1}:

\begin{prop}\label{transitivity_of_separated_basis}
    Let $(C,\val) \subseteq (L,\val)\subseteq (M,\val)$ be valued fields such that both $L/C$ and $M/L$ are separated. Then $M/C$ is separated.
\end{prop}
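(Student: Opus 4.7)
The plan is to pick an arbitrary finite-dimensional $C$-subspace $V$ of $M$ and exhibit a separated $C$-basis for some finite-dimensional $C$-subspace $U$ that contains $V$; then invoking Fact \ref{Delon1} will immediately give that $V$ itself is separated over $C$.

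First I would pass from $V$ to the $L$-span $V' := L \cdot V$, which is still finite-dimensional, this time over $L$. Since $M/L$ is separated, $V'$ admits a separated $L$-basis $(e_1, \dots, e_n)$. Fix a $C$-basis $(v_1, \dots, v_m)$ of $V$ and expand $v_j = \sum_i \lambda_{ij} e_i$ with $\lambda_{ij} \in L$. Let $W$ be the (finite-dimensional) $C$-subspace of $L$ spanned by all the coordinates $\lambda_{ij}$. Applying the separatedness of $L/C$ to $W$, we obtain a separated $C$-basis $(f_1, \dots, f_p)$ of $W$. Let $U$ be the $C$-subspace of $M$ spanned by the products $f_k e_i$; by construction $V \subseteq U$.

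The core computation is then to show that $(f_k e_i)_{k,i}$ is a separated $C$-basis of $U$. For any family $(c_{ik}) \in C$, applying the separatedness of $(e_i)$ over $L$ followed by the separatedness of $(f_k)$ over $C$ yields
\[
\val\Bigl(\sum_{i,k} c_{ik} f_k e_i\Bigr) = \min_i \Bigl\{\val\Bigl(\sum_k c_{ik} f_k\Bigr) + \val(e_i)\Bigr\} = \min_{i,k}\bigl\{\val(c_{ik}) + \val(f_k) + \val(e_i)\bigr\},
\]
which is exactly the separated basis condition for $(f_k e_i)$, since $\val(f_k e_i) = \val(f_k) + \val(e_i)$. $C$-linear independence is automatic from the same identity (or from the linear independence of the $e_i$ over $L$ combined with that of the $f_k$ over $C$). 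Hence $U$ has a separated basis over $C$, and Fact \ref{Delon1} gives that $V \subseteq U$ is also separated over $C$. As $V$ was arbitrary, $M/C$ is separated.

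The argument is essentially a tensor/product construction and should be routine; the only mild obstacle is the verification that $(f_k e_i)$ genuinely forms a separated basis. That amounts to the display above, and it goes through cleanly precisely because the value of $\sum_k c_{ik} f_k$ can be read off coordinate-wise from the separated basis of $W$, so that the minimum can be pushed inside the outer minimum coming from $(e_i)$. No assumption on the good separated basis property is needed here, so the proposition applies in the generality stated.
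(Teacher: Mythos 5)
Your proof is correct and follows essentially the same route as the paper's: in both cases the key observation is that products of a separated basis of $M$ over $L$ with a separated basis of $L$ over $C$ again form a separated family over $C$ (by pushing the inner minimum for $L/C$ through the outer minimum for $M/L$), after which Fact \ref{Delon1} finishes the job. You are slightly more careful than the paper in staying with explicit finite-dimensional subspaces $V \subseteq U$, but the mechanism and the appeal to Fact \ref{Delon1} are identical.
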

\begin{proof}
  Let $(f_i)_{i \in I}$ be a separated basis of $M$ over $L$ and $(e_j)_{j \in J}$ a separated basis of $L$ over $C$. For any $x \in M$, we can write  $x = \sum l_i f_i = \sum \left(\sum c_{i,j} e_j\right) f_i.$ The products $e_if_j$ are distinct, and the set $\{e_j f_i \mid j \in J, i \in I\}$ is $C$-linearly independent. Moreover, it remains separated over $C$; each $f_i$ is separated over $L$ and for any $c \in C$ and $j\in J$, the product $ce_j$ lies in $L$. 
  Thus, $\{e_j f_i \mid j \in J, i \in I\}$ is a separated basis of $M$ over $C$.

If $U$ is a finite-dimensional $C$-vector subspace of $M$, then $U$ lies in the $C$-span of $e_j f_i$. By Fact \ref{Delon1}, this implies that $U$ is separated over $C$. \end{proof}

We finish the section by recalling a well-known fact.

\begin{fact}\cite[Lemma 12.2]{HHM08}\label{SepBasisMaximal}
    If $C$ is maximally complete, then every valued field extension of C has the separated basis property over $C$.
\end{fact}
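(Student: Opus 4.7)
The plan is to prove the good separated basis property for every finite-dimensional $C$-subspace $V \subseteq L$ by induction on $n = \dim_C V$. The case $n = 1$ is trivial. For the inductive step, I would fix any codimension-one subspace $V' \subset V$, apply the induction hypothesis to obtain a good separated basis $b_1, \dots, b_{n-1}$ of $V'$, and then adjoin a well-chosen $b_n \in V \setminus V'$ to produce a separated basis of $V$, which I would then rescale to a good one.

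The core of the argument is the following: for any $w \in V \setminus V'$, there exists $c_0 \in V'$ achieving $\gamma := \sup_{c \in V'} v(w - c)$, and then $b_n := w - c_0$ extends $b_1, \dots, b_{n-1}$ to a separated basis. For the latter, if $v(\sum a_i b_i) > \min_i v(a_i b_i)$ for some coefficients, then extracting $b_n = w - c_0$ and dividing by $a_n$ would yield $d \in V'$ with $v(w - (c_0 + d)) > v(w - c_0)$, contradicting maximality.

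For the existence of $c_0$, note that $\gamma$ must be finite, since otherwise a maximizing sequence would force $w$ to be a pseudo-limit of elements of $V'$, hence in $V'$ by maximal completeness, contrary to assumption. If $\gamma$ were not attained, pick $c_i \in V'$ with $v(w - c_i)$ strictly increasing to $\gamma$. A direct ultrametric computation gives $v(c_j - c_i) = v(w - c_i)$ for $i < j$, so $(c_i)$ is pseudo-Cauchy in $V'$. Writing $c_i = \sum_{k<n} a_{i,k} b_k$ in the inductive separated basis, the separation identity $v(c_j - c_i) = \min_k(v(a_{j,k} - a_{i,k}) + v(b_k))$ forces each coordinate sequence $(a_{i,k})_i$ to be pseudo-Cauchy in $C$. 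Maximal completeness of $C$ supplies pseudo-limits $a^*_k \in C$; assembling $c^* := \sum a^*_k b_k \in V'$ and using separatedness once more gives $v(w - c^*) \geq \gamma$, so $c^*$ attains the supremum.

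Finally, to upgrade a separated basis $b_1, \dots, b_n$ to a good one, whenever $v(b_i)$ and $v(b_j)$ lie in the same coset of $\Gamma(C)$ but differ, write $v(b_j) - v(b_i) = v(c)$ for some $c \in C^\times$ and replace $b_i$ by $c \cdot b_i$; a one-line check shows the separated property is preserved, and iterating eliminates all mismatched pairs. The main technical obstacle, in my view, is the pseudo-convergence step: one must verify that the pseudo-limit $c^* \in V'$ genuinely satisfies $v(w - c^*) \geq \gamma$ rather than merely $v(c^* - c_i) \geq v(w - c_i)$, which requires carefully tracking the interplay between the separatedness of the induced basis of $V'$, the defining property of pseudo-limits in $C$, and the behavior of the coordinate sequences.
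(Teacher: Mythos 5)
The paper gives no proof of this lemma --- it is cited as a Fact from \cite{HHM08} --- so there is nothing in the paper to compare against. On its own terms, your overall strategy (induct on $\dim_C V$, extend a good separated basis of a codimension-one subspace $V'$ by an element $b_n = w - c_0$ achieving $\gamma = \sup_{c \in V'} v(w-c)$, and prove the supremum is attained via pseudo-convergence and maximal completeness of $C$) is correct and is essentially the standard route to Baur's theorem.

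There is, however, one genuine gap, and it is not the one you flag at the end. You assert that the separation identity $v(c_j - c_i) = \min_k\bigl(v(a_{j,k} - a_{i,k}) + v(b_k)\bigr)$ \emph{forces} each coordinate sequence $(a_{i,k})_i$ to be pseudo-Cauchy. This is false as stated. If only one coordinate ``moves'' at a time, a coordinate sequence can stall (differences with valuation $\infty$) and then jump, so its valuation increments are not strictly increasing. Concretely: with $v(b_1)=v(b_2)=0$, take $c_1=(0,0)$, $c_2=(\pi,0)$, $c_3=(\pi,\pi^2)$, $c_4=(\pi+\pi^{10},\pi^2)$, $c_5=(\pi+\pi^{10},\pi^2+\pi^{20}),\dots$; then $(c_i)$ is pseudo-Cauchy in $V'$ but neither coordinate sequence is. What is true --- and what the argument actually needs --- is that one may pass to a \emph{cofinal subsequence} on which each coordinate is either eventually constant or pseudo-Cauchy. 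Proving this requires a nontrivial combinatorial step (a pigeonhole/Ramsey-type argument over the finite index set $\{1,\dots,n-1\}$, plus care at limit ordinals), and it is precisely the place where the hypothesis that the separated basis of $V'$ is \emph{good} is typically exploited, so that the contributions $v(a_{j,k}-a_{i,k})+v(b_k)$ from distinct coordinates can be compared and cannot all cancel. As written, your proof jumps over this. The rest --- the ultrametric computation showing $(c_i)$ is pseudo-Cauchy, the use of $v(w - c^*) \geq \min\bigl(v(w-c_i), v(c_i - c^*)\bigr) \geq \gamma_i$ to conclude $v(w - c^*) \geq \gamma$, the finiteness of $\gamma$, the contradiction argument for separatedness, and the rescaling to a good basis --- is sound. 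The worry you raise at the end (that the pseudo-limit might satisfy only $v(c^* - c_i) \geq v(w - c_i)$) is in fact a non-issue: the ultrametric inequality immediately upgrades this to $v(w - c^*) \geq \gamma_i$ for all $i$, hence $\geq \gamma$.
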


\subsection{Stable Domination}\label{stable domination}
In this section, we fix a theory $T$ in some language. We define $\st_C$ to be the multi-sorted structure whose sorts are the $C$-definable sets that are stable and stably embedded with their induced structure. 

With this induced structure, $\st_C$ is stable. For subsets $X,Y$ and $Z$ of $\st_C$, we write $X\forkindep_Y Z$ to denote forking independence of $X$ and $Z$ over $Y$. For a set $A$,  we write $\st_C(A):= \st_C\cap \dcl(A)$.

\begin{defn}
    A type $\tpp(a/C)$ is called stably dominated if for all tuple $b$, if $\st_C(Ca)\forkindep_C\st_C(Cb)$, then $\tpp(b/C\st_C(Ca))\vdash \tpp(b/Ca)$.
 
\end{defn}
If $f$ is a $C$-definable map (or, pro-$C$-definable) into $\st_C$ whose domain contains all realizations of $\tpp(a/C)$, then, we say that $\tpp(a/C)$ is \textit{stably dominated via $f$} if for all tuples $b$, we have $\tpp(b/Cf(a))\vdash \tpp(b/Ca)$ whenever $f(a)\forkindep_C\st_C(Cb)$.

A global type $p$ is \textit{stably dominated over $C$} if $p|C$ is stably dominated.

We now list the properties of stably dominated types in algebraically closed valued fields. 

\begin{fact}\label{stabdom_acl_properties}([Proposition 3.13, Corollary 3.31 (iii), Corollary 6.12]\cite{HHM08}, \cite[Proposition 2.10 (1)]{HR19} ) For all tuples $a$ and parameter sets $C$, the following hold
\begin{enumerate} 
    \item[(i)] Suppose $C=\acl(C)$. If $\tpp(a/C)$ is stably dominated, then it has a unique $C$-definable extension $p$ that satisfies: 
    \begin{itemize}
    \item for all $B\supseteq C$, we have $a\models p|B$ if and only if $\st_C(Ca)\forkindep_C \st_C(B)$.
    \item If $b \models p|C$, then $a \forkindep_C b$ implies $b \forkindep_C a$.
    \end{itemize}
    \item[(ii)] The type $\tpp(a/C)$ is stably dominated if and only if $\tpp(a/\acl(C))$ is.
   \item[(iii)] If $\tpp(a/C)$ is stably dominated and $b\in \acl(Ca)$, then $\tpp(b/C)$ is stably dominated.

\end{enumerate}
\end{fact}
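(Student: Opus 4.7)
The plan is to prove the three clauses using the fact that $\st_C$, with its induced structure, is a stable and stably embedded multi-sorted structure, so that all stability-theoretic tools (canonical bases, definability of types, symmetry of nonforking) are available inside $\st_C$. The overall strategy is to push every question about $\tpp(a/C)$ through the domination map into $\st_C$, settle it there, and pull the conclusion back using the defining property of stable domination.

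For (i), since $C = \acl(C)$ and $\st_C$ is stable, the type $\tpp(\st_C(Ca)/C)$ admits a unique global nonforking extension $q$, which is $C$-definable. I would define the candidate $p$ by declaring, for a formula $\varphi(x,y)$ over $C$ and a tuple $b$, that $\varphi(x,b) \in p$ iff for some (equivalently every) $\alpha \models q|Cb$, every $a' \models \tpp(a/C)$ with $\st_C(Ca') = \alpha$ satisfies $\varphi(a',b)$. This is well-defined and complete precisely because $\alpha \forkindep_C \st_C(Cb)$ makes stable domination applicable to $\tpp(a/C)$, collapsing $\tpp(a'/Cb)$ to $\tpp(a'/C\alpha)$. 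The $C$-definability of $p$ reduces to that of $q$ plus the fact that the map sending $\alpha$ to $\tpp(a'/C\alpha)$ is already determined by $\tpp(a/C)$. The characterization $a \models p|B \iff \st_C(Ca)\forkindep_C \st_C(B)$ is built into the definition. The symmetry clause then follows: if $b \models p|C$ and $a \forkindep_C b$, then $\st_C(Ca) \forkindep_C \st_C(Cb)$; symmetry of nonforking inside the stable reduct gives $\st_C(Cb) \forkindep_C \st_C(Ca)$, and feeding this back through the $C$-definable type $p$ yields $b \forkindep_C a$.

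For (ii), the direction from $\acl(C)$ to $C$ is essentially tautological: a witness $b$ to failure of stable domination over $C$ lifts to $\acl(C)$ after noting that $\st_C \subseteq \st_{\acl(C)}$ and that forking in the stable reduct does not change upon adjoining algebraic elements. For the converse, I would use that the extension $p$ constructed in (i) is $C$-definable, hence also $\acl(C)$-definable, and then verify directly that the same domination statement holds with parameters from $\acl(C)$ by reducing to a $C$-instance via a suitable Aut$(\acl(C)/C)$-argument. For (iii), assume $b \in \acl(Ca)$ and $\st_C(Cb) \forkindep_C \st_C(Cc)$; choose $c' \equiv_{C\st_C(Cb)} c$ with $c' \forkindep_{C\st_C(Cb)} a$, so that transitivity gives $\st_C(Ca) \forkindep_C \st_C(Cc')$. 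Applying stable domination of $\tpp(a/C)$ gives $\tpp(c'/Ca) = \tpp(c/Ca)$, and restricting to $Cb \subseteq \acl(Ca)$ yields $\tpp(c'/Cb) = \tpp(c/Cb)$, proving $\tpp(c/C\st_C(Cb)) \vdash \tpp(c/Cb)$ as required.

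The main obstacle I expect is in (i): verifying that the candidate definition of $p|B$ is simultaneously consistent, complete, and genuinely $C$-definable rather than only $\acl(C)$- or $B$-definable. This is where the hypothesis $C = \acl(C)$ is essential, since it guarantees uniqueness of $q$ and removes ambiguity from the choice of realization $\alpha$; without it, one would only obtain an $\acl(C)$-definable type and lose the desired canonical extension property.
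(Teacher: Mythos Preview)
The paper does not prove this statement; it is recorded as a \emph{Fact} with citations to \cite{HHM08} and \cite{HR19}, so there is no ``paper's own proof'' to compare against. Your sketch for (i) is the standard construction and is fine in outline.

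There is, however, a genuine gap in your argument for (iii). You fix $c$ with $\st_C(Cb)\forkindep_C\st_C(Cc)$ and then \emph{choose} a particular $c'\equiv_{C\st_C(Cb)}c$ with $c'\forkindep_{C\st_C(Cb)}a$. Two problems arise. First, the quantifier is wrong: to prove $\tpp(c/C\st_C(Cb))\vdash\tpp(c/Cb)$ you must show $c'\equiv_{Cb}c$ for \emph{every} $c'\equiv_{C\st_C(Cb)}c$, not just for one satisfying an extra independence condition. Second, even for your chosen $c'$, the step ``applying stable domination of $\tpp(a/C)$ gives $\tpp(c'/Ca)=\tpp(c/Ca)$'' is unjustified: stable domination only tells you that $\tpp(c'/C\st_C(Ca))\vdash\tpp(c'/Ca)$, which determines $\tpp(c'/Ca)$ from $\tpp(c'/C\st_C(Ca))$. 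To conclude equality with $\tpp(c/Ca)$ you would need both $\st_C(Ca)\forkindep_C\st_C(Cc)$ (you only have this for $c'$) and $\tpp(c'/C\st_C(Ca))=\tpp(c/C\st_C(Ca))$ (you only have agreement over $C\st_C(Cb)$, which can be strictly smaller than $C\st_C(Ca)$). Neither follows from what you have written.

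It is worth noting that the reference given for (iii), \cite[Corollary~6.12]{HHM08}, lies in the ACVF-specific part of that book; the argument there goes through the characterisation of stable domination as orthogonality to $\Gamma$ (Fact~\ref{orthogonality_stabdom} here), which makes (iii) immediate since $b\in\acl(Ca)$ forces $\Gamma(Mb)\subseteq\Gamma(Ma)=\Gamma(M)$ over any model $M$. A purely abstract proof of (iii) from the definition of stable domination is possible but requires more care than your sketch provides. A smaller issue: in (ii) you invoke the $C$-definable extension from (i), but (i) was proved under the hypothesis $C=\acl(C)$, so this is circular as stated.
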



\begin{rem}\label{pushforward_stabdom}
   Fact \ref{stabdom_acl_properties} $(iii)$ in particular implies that if $\tpp(a/C)$ is stably dominated and $f$ is an $C$-definable map, then $\tpp(f(a)/C)$ is stably dominated.
\end{rem}

Stably dominated types are invariant under base change. For the downward direction, we use a recent result of Simon and Vicaria, which avoids relying on invariant global extensions of types over $\acl$-closed sets.
\begin{fact}\label{StabDomExtensions}(\cite[Proposition 4.9]{HHM08}, \cite[Theorem 4.1]{SV24}) Let $p$ be a global $C$-invariant type and let $B\supseteq C$.
\begin{enumerate} 
\item[(i)] If $p|C$ is stably dominated, then so is $p|B$.
\item[(ii)] If $p|B$ is stably dominated, then so is $p|C$.

 
\end{enumerate}
    
\end{fact}

In $\ACVF$, stably dominated types are characterized by being orthogonal to the value group, in the following sense.

\begin{fact}\label{orthogonality_stabdom}
    Let $p$ be $C$-invariant type in $\ACVF$. Then $p$ is stably dominated if and only if for every model $M\supseteq C$ and every realization $a\models p|M$, we have $\Gamma(Ma)=\Gamma(M)$.
\end{fact}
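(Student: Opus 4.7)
The plan is to prove both implications separately, leveraging the pushforward stability of domination for the forward direction and the separated-basis analysis of maximally complete extensions for the backward one.

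For $(\Rightarrow)$, suppose $p$ is stably dominated, $M \supseteq C$ is a model, and $a \models p|M$. Let $\gamma \in \Gamma(Ma)$, so $\gamma = f(a)$ for some $M$-definable function $f$. By Remark \ref{pushforward_stabdom}, $\tpp(\gamma/M)$ is itself stably dominated. In $\ACVF$, the value group sort is purely stably embedded with induced structure a model of $\DOAG$, which is o-minimal and hence contains no non-algebraic stable type; concretely $\st_M(M\gamma) = \st_M(M)$, since any element of $\Gamma$ outside $\Gamma(M)$ realizes an infinite linearly ordered type over $M$, which cannot lie in a stably embedded stable sort. Picking any realization $\gamma' \models \tpp(\gamma/M)$, the domination condition forces $\tpp(\gamma'/M) \vdash \tpp(\gamma'/M\gamma)$; but if $\gamma' \ne \gamma$ then the $M\gamma$-formula $x \ne \gamma$ lies in $\tpp(\gamma'/M\gamma)$ while failing for $\gamma$ itself (which realizes $\tpp(\gamma'/M)$), a contradiction. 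Hence $\tpp(\gamma/M)$ is algebraic, and since $\acl = \dcl$ in $\DOAG$, we conclude $\gamma \in \Gamma(M)$.

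For $(\Leftarrow)$, assume the orthogonality condition. The strategy is first to reduce to the case where $M$ is maximally complete: by Fact \ref{maximal_immediate_model}, one replaces $C$ by a maximally complete model containing it, and Fact \ref{StabDomExtensions} $(ii)$ then allows the resulting stable domination to descend to arbitrary base models, the global $C$-invariant extension of $p$ providing the required invariant extension of parameter types. For maximally complete $M$, Fact \ref{SepBasisMaximal} yields a good separated basis for every finite-dimensional $M$-subspace of $\dcl(Ma)$. Combined with the hypothesis $\Gamma(Ma) = \Gamma(M)$, a good separated decomposition of any $x \in \dcl(Ma)$ expresses it as an $M$-linear combination whose monomial valuations already lie in $\Gamma(M)$, so the entire transcendental content of $a$ over $M$ is carried by the residue field, a stable, stably embedded sort in $\ACVF$. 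To deduce stable domination, take $b$ with $\st_M(Ma) \forkindep_M \st_M(Mb)$; the separated-basis representation of $a$, together with quantifier elimination in $\ACVF$ in the geometric sorts, shows that the residue data $\st_M(Ma)$ and $\st_M(Mb)$ suffice to determine the quantifier-free $\mathcal{L}_{val}$-type of $a$ over $Mb$, giving $\tpp(b/M\st_M(Ma)) \vdash \tpp(b/Ma)$.

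The main obstacle is the backward direction, specifically translating the triviality of $\Gamma(Ma)/\Gamma(M)$ into full type-determination of $a$ over $Mb$ from residue-field data. The separated-basis machinery is the essential tool but requires $M$ to be maximally complete; the transfer via Fact \ref{StabDomExtensions} $(ii)$ then removes this restriction to yield the result for all $M \supseteq C$.
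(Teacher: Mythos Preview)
The paper does not prove this statement: it is recorded as a background Fact in the preliminaries (the standard characterization of stable domination in $\ACVF$ as orthogonality to $\Gamma$, due to \cite{HHM08}). So there is no in-paper proof to compare against; I evaluate your argument on its own.

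Your forward direction is essentially correct. One small omission: before applying Remark~\ref{pushforward_stabdom} with an $M$-definable function $f$, you need $p|M$ (not just $p|C$) to be stably dominated. This follows from Fact~\ref{StabDomExtensions}(i), since $p$ is $C$-invariant and hence $\acl(C)$-invariant; you should say so explicitly.

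Your backward direction has the right architecture---go up to a maximally complete model, use separated bases, then descend---but the descent step is misjustified. Fact~\ref{StabDomExtensions}(ii) requires that $\tpp(M_0/C)$ have a global $C$-invariant extension, where $M_0$ is your chosen maximally complete model. You claim that ``the global $C$-invariant extension of $p$'' supplies this, but $p$ is the type of $a$, not of $M_0$; the two are unrelated, so this sentence does not do what you need. The correct justification is the invariant-extension property of $\ACVF$: over any $\acl$-closed base, every type admits a global invariant extension (a consequence of metastability, \cite{HHM08}). One first replaces $C$ by $\acl(C)$ via Fact~\ref{stabdom_acl_properties}(ii), and then this property furnishes the required invariant extension of $\tpp(M_0/\acl(C))$, so Fact~\ref{StabDomExtensions}(ii) applies. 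Separately, once $M_0$ is maximally complete with $\Gamma(M_0 a)=\Gamma(M_0)$, you should simply invoke Fact~\ref{algebraic_stabdom} (noting that $\dcl(M_0 a)$ is regular over the model $M_0$) to conclude that $p|M_0$ is stably dominated; your hand-sketch of the separated-basis plus quantifier-elimination argument gestures at the right mechanism but is not itself a proof.
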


Finally, we restate the algebraic characterization of stable domination from \cite{EHS23}:

\begin{fact}\label{algebraic_stabdom}(\cite[Theorem 3.6]{EHS23}, ) Let $\mathfrak{U}$ be a universal model of the theory of algebraically closed valued fields in the language of geometric sorts. Let $C$ be a subfield and $a$ be a tuple of valued field elements. Let $L$ be the definable closure of $Ca$ in the valued field sort such that $L$ is a regular extension of $C$. Then the following are equivalent:
\begin{enumerate}
    \item $\tpp(a/C)$ is stably dominated,
    \item $L$ is an unramified extension of $C$ and has the separated basis property over $C$.
    \end{enumerate}
    \end{fact}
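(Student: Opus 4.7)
The plan is to use the characterization of stable domination in ACVF as orthogonality to the value group (Fact \ref{orthogonality_stabdom}) as the main technical tool, combined with the separated basis machinery from Proposition \ref{transitivity_of_separated_basis} and Fact \ref{SepBasisMaximal}. Regularity of $L/C$ lets us move between $C$ and $\acl(C)$ freely via Fact \ref{stabdom_acl_properties}(ii), which reduces much of the analysis to the case $C = \acl(C)$.

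For the direction (1) implies (2), assume $\tpp(a/C)$ is stably dominated, with $p$ the global $\acl(C)$-definable extension guaranteed by Fact \ref{stabdom_acl_properties}(i). By Fact \ref{orthogonality_stabdom}, for any model $M \supseteq \acl(C)$ and any $a' \models p|M$, we have $\Gamma(Ma') = \Gamma(M)$. Applying this with $M$ sufficiently saturated and transferring back via a $C$-isomorphism between $L$ and $\dcl(Ca') \cap \vf$, we obtain $\Gamma(L) = \Gamma(C)$, so $L/C$ is unramified. For the good separated basis property, I would pass to a maximally complete immediate extension $\widetilde{C}$ of $\acl(C)$ inside a large ambient model; by Fact \ref{SepBasisMaximal}, $L\widetilde{C}$ has the good separated basis property over $\widetilde{C}$. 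Since $p$ remains stably dominated over $\widetilde{C}$ by Fact \ref{StabDomExtensions}(i), and since $\widetilde{C}/C$ is immediate and $L/C$ is unramified, one descends good separated bases from $\widetilde{C}$ to $C$ by replacing $\widetilde{C}$-coefficients by $C$-coefficients at the level of residues.

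For the direction (2) implies (1), assume $L/C$ is unramified with the good separated basis property. By Fact \ref{orthogonality_stabdom}, it suffices to show that for any model $M \supseteq C$ and any realization $a'$ of a canonical extension of $\tpp(a/C)$ over $M$, we have $\Gamma(Ma') = \Gamma(M)$. The good separated basis property provides, for every finite-dimensional $C$-subspace $V$ of $L$, a basis $b_1, \dots, b_n$ with $\val(b_i) \in \Gamma(C) = \Gamma(M)$ (by unramifiedness) and $\val(\sum c_i b_i) = \min_i(\val(c_i) + \val(b_i))$ for $c_i \in C$. Using regularity of $L/C$, linear independence persists under base change to $M$, and the good spacing of the $\val(b_i)$ modulo $\Gamma(C) = \Gamma(M)$ upgrades the minimum formula from $C$-coefficients to $M$-coefficients. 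Thus every element of the $M$-algebra generated by $a'$ has valuation in $\Gamma(M)$, yielding orthogonality to $\Gamma$ and hence stable domination.

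The main obstacle will be the descent step in direction (1) implies (2): extracting a good separated basis defined over $C$ from one defined over the maximally complete extension $\widetilde{C}$. This requires carefully combining the regularity of $L/C$, the immediacy of $\widetilde{C}/C$, and the stable domination hypothesis transported via Fact \ref{StabDomExtensions}, in order to ensure that no essential $\widetilde{C}$-data is introduced in the basis and that the ``goodness'' (distinctness of valuations modulo $\Gamma(C)$) survives descent.
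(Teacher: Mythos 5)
The paper does not prove this statement: it is imported as a Fact from \cite[Theorem~3.6]{EHS23}. The closest argument the paper actually carries out is the proof of the henselian analogue, Theorem~\ref{resdom_implies_separated_basis} together with Corollary~\ref{algebraic_resdom}, and it is instructive to compare your sketch against that. In the direction $(1)\Rightarrow(2)$, Fact~\ref{orthogonality_stabdom} only gives $\Gamma(Ma')=\Gamma(M)$ for \emph{models} $M$ containing $\acl(C)$; ``transferring back via a $C$-isomorphism'' does not by itself yield $\Gamma(L)=\Gamma(C)$, since a priori $\Gamma(Ca')$ could contain elements of $\Gamma(M)\setminus\Gamma(C)$. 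More importantly, the descent of the good separated basis from the maximally complete extension $\widetilde{C}$ to $C$, which you flag as the main obstacle, is in fact the whole content of the theorem, and ``replacing $\widetilde{C}$-coefficients by $C$-coefficients at the level of residues'' is not an argument: Fact~\ref{SepBasisMaximal} produces a separated basis of $V\widetilde{C}$ over $\widetilde{C}$ which need not lie in $L$ at all. The actual argument (as in the paper's Theorem~\ref{resdom_implies_separated_basis}, which adapts \cite{EHS23}) is an induction on dimension: one projects $l_{n+1}$ onto the $F$-span of a partial separated tuple via \cite[Prop.~12.1]{HHM08}, uses the type implication $\tpp_0(F/C)\vdash\tpp_0(F/L)$ of Lemma~\ref{separated_basis} and quantifier elimination to realize the relevant quantifier-free condition by a tuple in $C^{\alg}$, and finally averages Galois conjugates to land in $C$. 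None of these steps are present in your sketch.

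In the direction $(2)\Rightarrow(1)$, the route through Fact~\ref{orthogonality_stabdom} presupposes a $C$-invariant global extension of $\tpp(a/C)$, which is not given; constructing one is essentially equivalent to proving stable domination. Moreover, the claim that the ``good spacing of the $\val(b_i)$ modulo $\Gamma(C)=\Gamma(M)$'' upgrades $\val(\sum c_i b_i)=\min_i(\val(c_i)+\val(b_i))$ from $C$-coefficients to $M$-coefficients is not automatic: since $L/C$ is unramified, goodness forces all $\val(b_i)$ to be equal, so the formula survives base change to $M$ precisely when the residues $\res(b_i/b_1)$ remain $\rf(M)$-linearly independent, i.e.\ when $\rf(L)$ and $\rf(M)$ are linearly disjoint over $\rf(C)$. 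That is exactly the independence hypothesis in the definition of stable domination, not a condition that holds for all models $M$. The correct route for this direction is the algebraic one: show $\tpp_0(L/C\,\rf(M)\,\Gamma(M))\vdash\tpp_0(L/M)$ under the stable-independence hypothesis, as in Fact~\ref{generated_by_rf_and_gamma} (and Fact~\ref{algebraic_residual_domination} in the henselian parallel), which yields stable domination directly.
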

\subsection{Stabilizers and Group Chunks}
Throughout this section, we fix an arbitrary theory $T$ with a universal model $\U$.

Let $(G, \cdot)$ be a type-definable group in $\U$. For any model $M$, the group $G(M)$ acts on the set $S_G(M)$ of types over $M$ concentrating on $G$ via $g\cdot p = \tpp(g\cdot a/M)$ for $a\models p$. This extends to an action of $G(\U)$ on $S_G(\U)$. Moreover, $G(\U)$ acts on definable subsets of $G$ as follows: if $D$ is a definable subset of $G$ defined by $\phi(x)$ and $g \in G(\U)$, then $g \cdot D=\{x\text{ : }\models \phi(g^{-1}\cdot x)\}$ is the left translate of $D$. The right action of $G(\U)$ on global types in $G$ and on definable subsets of $G$ are defined analogously.
\begin{defn}
    Let $G$ be a definable group and let $p$ be a global type concentrating on $G$. 
    \begin{enumerate}
    \item We say $p$ is an \textit{$f$-generic} of $G$ if for every formula $\varphi(x)\in p$, there exists a small model $M$ such that, for all $g\in G$, the translate $g\cdot \varphi$ does not fork over $M$.
    
    \item   We say that $p$ is a \textit{strongly $f$-generic} of $G$ if there exists a small model $M$ such that every translate $g\cdot p$ does not fork over $M$. 

    \item We say that $p$ is a \emph{definable generic} of $G$ over $M$ if $p$ is $M$-definable and, for all $g \in G(\U)$, the translate $g \cdot p$ is also $M$-definable.
    \end{enumerate}

    \end{defn}



If $\mu$ is an ideal of definable subsets of $G$ and $A$ is a parameter set, we say that $\mu$ is $A$-invariant if it is invariant under all automorphisms fixing $A$ point-wise. Similarly, $\mu$ is $G$-invariant over $A$ if it is invariant under left translations by elements of $G(A)$.

\begin{defn} Let $\mu$ be an $M$-invariant ideal. We say that $\mu$ \emph{has the $S1$-property} if, for any formula $\varphi(x,y)$ and any $M$-indiscernible sequence $(a_i)_{i < \omega}$, if $\varphi(x,a_i) \land \varphi(x,a_j) \in \mu$ for some (or equivalently, any) $i, j < \omega$, then there exists some (or equivalently, any) $i < \omega$ such that $\varphi(x,a_i) \in \mu$.

We say that $\mu$ \emph{has the $S1$-property on an $M$-definable set $A$} if $A \not\in \mu$ and the condition above is satisfied for formulas defining subsets of $A$.

\end{defn}
When $G$ admits a global strongly $f$-generic type over $M$, the set of formulas that do not extend to a strongly $f$-generic type forms an ideal that is both $M$-invariant and $G$-invariant on $M$.
\begin{fact}\label{genericity_ideal} (\cite[Lemma 3.14]{MOS20})
Assume that $G$ admits a global strongly $f$-generic type over $M$. Then, the ideal of formulas that do not extend to a strongly $f$-generic type over $M$ has the S1-property.
\end{fact}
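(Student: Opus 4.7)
My plan is to prove the $S1$-property by contrapositive: assume $\varphi(x, a_0) \notin \mu$, so that some strongly $f$-generic global type $p$ (witnessed by $M$) contains $\varphi(x, a_0)$; I will try to produce a strongly $f$-generic type containing $\varphi(x, a_0) \wedge \varphi(x, a_1)$, contradicting $\varphi(x, a_0)\wedge\varphi(x, a_1) \in \mu$.

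The first step is to record two immediate closure properties of strong $f$-genericity (with witness $M$). First, left translation: for any $g \in G(\U)$ the translate $g\cdot p$ is strongly $f$-generic over $M$, since $h\cdot(g\cdot p) = (hg)\cdot p$ does not fork over $M$ for every $h \in G(\U)$. Second, automorphism invariance: for any $\sigma \in \mathrm{Aut}(\U/M)$, the conjugate $\sigma(p)$ is strongly $f$-generic over $\sigma(M) = M$. Applying the second to the $M$-indiscernibility hypothesis, I choose $\sigma$ with $\sigma(a_0) = a_1$, and set $q := \sigma(p)$, a strongly $f$-generic containing $\varphi(x, a_1)$; in particular $\varphi(x, a_1) \notin \mu$.

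The second step is to combine these two witnesses. I realize $b \models p \mid Ma_0a_1$ and $c \models q \mid Ma_0a_1b$, so that $b \models \varphi(x, a_0)$, $c \models \varphi(x, a_1)$, with both realizations non-forking over $M$. Put $g := cb^{-1}$. Then $g\cdot p$ is strongly $f$-generic by the translation closure, and since $g\cdot b = c \models \varphi(x, a_1)$, the formula $\varphi(x, a_1)$ lies in $g\cdot p$. The remaining task is to show that $\varphi(x, a_0) \in g\cdot p$ as well, which would make $g\cdot p$ a strongly $f$-generic type containing $\varphi(x, a_0) \wedge \varphi(x, a_1)$, yielding the desired contradiction.

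The main obstacle is precisely this final transfer: unlike the $\NIP$ setting, strongly $f$-generic types are not automatically $M$-invariant in an arbitrary theory, so one cannot transport formulas freely between $p$ and its translates. I would expect the actual proof in \cite{MOS20} to handle this either by a Morley-product construction (form $p \otimes q$ and push forward through the group multiplication, invoking the closure of strong $f$-genericity under this product, which in turn follows from the $G$-invariance of the class of strongly $f$-generic types combined with the non-forking product property), or by a more direct dividing argument: assuming $\varphi(x, a_0) \wedge \varphi(x, a_1) \in \mu$, reduce to inconsistency of $\{\varphi(x, a_i)\}_{i<\omega}$ along the $M$-indiscernible sequence, so that $\varphi(x, a_0)$ divides over $M$, contradicting the fact that $p$ does not fork over $M$. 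In either case the indiscernibility is fed into a base-independence argument to bridge the gap left at the end of Step 2.
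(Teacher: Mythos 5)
The statement is cited in the paper as a Fact from \cite{MOS20} (Lemma 3.14) without proof, so there is no in-paper argument to compare against; what follows assesses your proposal on its own terms.

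Your Step 1 is fine: passing to the contrapositive is the right frame, left translates and $\Aut(\U/M)$-conjugates of a strongly $f$-generic over $M$ are again strongly $f$-generic over $M$, and the conjugate $q=\sigma(p)$ with $\sigma(a_0)=a_1$ indeed contains $\varphi(x,a_1)$.

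Step 2, however, already breaks down before the place where you flag the gap. You claim that $\varphi(x,a_1)\in g\cdot p$ ``since $g\cdot b = c \models \varphi(x,a_1)$.'' This does not follow. For a \emph{global} type, $\varphi(x,a_1)\in g\cdot p$ means that $\varphi(gx,a_1)\in p$, i.e. that a realization $d\models p\restriction M a_1 g$ satisfies $\varphi(gd,a_1)$. Since $g=cb^{-1}$, the parameter set $Ma_1g$ contains $b$ and $c$, so $d$ must realize $p\restriction M a_0 a_1 b c$ --- and the element $b$ certainly does not realize $p$ over a set containing $b$ itself. In other words, $g\cdot b$ realizes $(g\cdot p)\restriction A$ only when $g\in\dcl(A)$, but here $g$ is built from $b$, and $b$ only realizes $p\restriction M a_0 a_1$. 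So neither $\varphi(x,a_1)\in g\cdot p$ nor $\varphi(x,a_0)\in g\cdot p$ is justified, and the obstruction is not specifically an invariance issue for the second formula; it affects both for the same reason. Your diagnosis that ``the remaining task is to show that $\varphi(x,a_0)\in g\cdot p$'' therefore understates the problem.

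The two escape routes you sketch also do not close the gap as stated. The Morley-product route needs $p$ (or $q$) to be $M$-invariant or at least definable for $p\otimes q$ to make sense globally, and strong $f$-genericity over $M$ does not give invariance outside $\NIP$ --- which is exactly the feature you identified as missing. The dividing route conflates two ideals: $\varphi(x,a_0)\wedge\varphi(x,a_1)\in\mu$ only says that this formula has no strongly $f$-generic extension; it does not say that $\{\varphi(x,a_i)\}_{i<\omega}$ is inconsistent or even that $\varphi(x,a_0)\wedge\varphi(x,a_1)$ forks over $M$, since $\mu$ may properly contain the forking ideal. Establishing that containment (or, relatedly, that $\mu$-wide implies non-forking, cf.\ Fact~\ref{S_1_wide_implies_nonforking}) is precisely part of what one needs, so it cannot be assumed. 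As it stands, the proposal identifies the correct starting moves but does not supply the key mechanism for combining the two witnesses $p$ and $\sigma(p)$ into a single strongly $f$-generic type containing both instances of $\varphi$.
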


\begin{defn} Let $M$ be a small model of a theory $T$, and let $(G, \cdot)$ be an $M$-type-definable group. Let $\mu$ be the ideal of formulas that do not extend to a global strongly $f$-generic type over $M$. 


\begin{enumerate}
    \item For $p, q\in S_G(M)$, we define $Stab_G(p):=\{g\in G(\U)\textit{ $\mid$ }g\cdot p = p.   \}$, and $Stab_G(p,q):=\{g\in G(\U)\textit{ $\mid$ }g\cdot p = q.   \}$. 
    
    \item  A type $r$ in $G$ is called \textit{wide} if $r$ is not contained in $\mu$. If $p\in S_G(M)$ is an $f$-generic type, we define $$St_{\mu}(p):=\{g\in G(\U) \textit{ $\mid$ } g\cdot p \cap p \text{ is wide.}\}.$$

If $g\in St_{\mu}(p)$, then there exists a realization $a$ of $p$ such that $g\cdot a\models p$, and $\tpp(g\cdot a/Mg)$ extends to a global strongly $f$-generic type over $M$.



    
    We write $Stab_{\mu}(p)$ for the group generated by $St_{\mu}(p)$.

\end{enumerate}
    \end{defn}

In \cite{BYC14}, it was shown that in an $\NTP$ theory, the ideal of formulas that do not fork over $M$ (or, more generally, over any extension base) has the $S1$-property. The converse is also true. The following fact can be found in \cite{Hru12}.
\begin{fact}\label{S_1_wide_implies_nonforking} Assume that $\mu$ is an ideal that is $M$-invariant and has the $S1$ property on an $M$-definable set $A$. Let $p$ be a type concentrating on $A$. If $p$ is $\mu$-wide, then $p$ does not fork over $M$.
\end{fact}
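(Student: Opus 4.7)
The plan is a proof by contradiction, unfolding the definition of forking and then iterating the $S1$-property in a doubling argument. Suppose for contradiction that $p$ forks over $M$. Then some $\varphi(x)\in p$ is equivalent to a disjunction $\bigvee_{i=1}^{n}\psi_i(x,b_i)$ where each $\psi_i(x,y)$ divides over $M$. Since $p$ is $\mu$-wide, $\varphi(x)\notin\mu$; as $\mu$ is an ideal closed under finite unions, at least one disjunct $\psi(x,b):=\psi_{i_0}(x,b_{i_0})$ satisfies $\psi(x,b)\notin\mu$ and still divides over $M$. Because $p$ concentrates on $A$, we may also arrange (by intersecting with the defining formula of $A$) that $\psi(x,b)$ defines a subset of $A$, so that the hypothesis that $\mu$ has $S1$ on $A$ really applies to $\psi$.

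Fix an $M$-indiscernible sequence $(b_j)_{j<\omega}$ with $b_0=b$ and an integer $k$ such that $\{\psi(x,b_j):j<\omega\}$ is $k$-inconsistent. The core step is to iterate $S1$ along this sequence in order to extract a long consistent conjunction. For the base step, apply the contrapositive of the $S1$-property to $(b_j)_{j}$ with the formula $\psi(x,y)$: if $\psi(x,b_0)\wedge\psi(x,b_1)\in\mu$, then $\psi(x,b_0)\in\mu$, contradicting $\psi(x,b)\notin\mu$. Hence $\psi(x,b_0)\wedge\psi(x,b_1)\notin\mu$. Regrouping, $((b_{2j},b_{2j+1}))_{j<\omega}$ remains $M$-indiscernible, and applying $S1$ with $\chi_1(x,y_1,y_2):=\psi(x,y_1)\wedge\psi(x,y_2)$ upgrades the previous conclusion to $\bigwedge_{j<4}\psi(x,b_j)\notin\mu$. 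Iterating the doubling $n$ times yields $\bigwedge_{j<2^{n}}\psi(x,b_{j})\notin\mu$, hence in particular this conjunction is consistent; taking $2^{n}\ge k$ contradicts $k$-inconsistency of $\{\psi(x,b_j)\}$ and closes the argument.

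I expect the main subtlety to be the bookkeeping of this iteration. At stage $n$ one has to verify that the regrouped sequence $((b_{2^{n}j},\ldots,b_{2^{n}(j+1)-1}))_{j<\omega}$ is still $M$-indiscernible (which is immediate from the original indiscernibility) and that the growing formula $\chi_n(x,\bar y)$ still defines a subset of $A$, so that the $S1$-property applies to it. Both points are routine once one explicitly keeps track of the indexing, but they are the places where a careful write-up is required; everything else, including the passage from forking to a single dividing disjunct not in $\mu$, uses only the ideal axiom and $\mu$-wideness of $p$.
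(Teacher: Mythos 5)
The paper does not prove this fact; it is stated as a citation to \cite{Hru12}. Your doubling argument is indeed the standard proof of this statement, and the structure is correct: pass to a single dividing disjunct outside $\mu$, fix a $k$-inconsistent $M$-indiscernible sequence, and iterate the $S1$-property on regrouped blocks to show that arbitrarily long conjunctions remain outside $\mu$, contradicting $k$-inconsistency (since inconsistent formulas define the empty set, which lies in any ideal containing a set).

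One point needs to be tightened, though. You first extract $\psi(x,b):=\psi_{i_0}(x,b_{i_0})\notin\mu$ from the disjunction, and only afterwards say ``we may arrange, by intersecting with the defining formula of $A$, that $\psi$ defines a subset of $A$.'' In that order the claim $\psi(x,b)\wedge A(x)\notin\mu$ does not follow from $\psi(x,b)\notin\mu$: the ideal $\mu$ is not assumed to live only on subsets of $A$, so a set outside $\mu$ can meet $A$ in a set that does lie in $\mu$. The fix is to reverse the two steps: since $p$ concentrates on $A$, the formula $\varphi(x)\wedge A(x)$ is in $p$ and hence outside $\mu$, and it implies $\bigvee_i\bigl(\psi_i(x,b_i)\wedge A(x)\bigr)$; \emph{then} the ideal property gives some $i_0$ with $\psi_{i_0}(x,b_{i_0})\wedge A(x)\notin\mu$, and this formula both defines a subset of $A$ and is still $k$-inconsistent along the original $M$-indiscernible sequence. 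With that reordering the rest of your argument, including the bookkeeping of the regrouped indiscernible sequences of blocks of length $2^n$, goes through as you describe.
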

We now summarize some key properties about generics and stabilizers of a group from \cite{CS18} and \cite{MOS20} below. For any definable group $G$, $G_M^{00}$ denotes the smallest $M$-type definable subgroup of bounded index. When $T$ is $\NIP$, one has $G_M^{00}=G
_{\emptyset}^{00}$ for every $M$, and this group is simply denoted by $G^{00}$ (see \cite[Theorem 8.4]{NIPbook}).

\begin{fact}\label{genericity} Let $G$ be a type definable group with a  strongly $f$-generic over $M$, and let $\mu$ be the ideal of definable subsets of $G$ that do not extend to a global strongly $f$-generic type.
\begin{itemize}
    \item[(i)](\cite[Proposition 3.8]{CS18}) If $T$ is $\NIP$, then a global type $p$ is an $f$-generic of $G$ if and only if it has a bounded orbit under the translation by elements of $G$ if and only if $G^{00}=Stab_G(p)=(p\cdot p^{-1})^2$. 
    \item[(ii)](\cite[Theorem 3.18]{MOS20}) If $T$ is $\NTP$ and $p\in S_G(M)$ is an $f$-generic, then $G_M^{00}=Stab_{\mu}(p)=(p\cdot p^{-1})^2$. Moreover, $G_M^{00}\setminus St_{\mu}(p)$ is contained in a union of $M$-definable non-wide sets.
\end{itemize}
    
\end{fact}

If $p \in S(M)$ is $A$-invariant for some $A \subseteq M$, then for any $B \supseteq M$, there is a canonical $A$-invariant extension $p|_{B}$, defined as follows: for a formula $\phi(x,y)$ and $b \in  B^{|y|}$, $\phi(x,b)\in p|_B$ if and only if there is a tuple $m\in M^{|y|}$ with $\phi(x,m)\in p$ and $b\models \tpp(m/A)$. 

If $p(x)$ and $q(y)$ are $A$-invariant types over $M$, then their tensor product $p\otimes q$ is the $A$-invariant type defined by: $(a,b)\models p\otimes q$ if and only if $b\models q$, and $a\models p|_{Mb}$. 

\begin{lem}\label{lemma_indep_products_of_generics}
    Let $G$ be a type-definable group, and $p \in S_G(M)$ be an invariant type, where $M$ is a sufficiently saturated model over which $G$ is defined. Assume that the stabilizer $Stab_G(p)$ of $p$ is $G^{00}$. 

    Let $p '$ denote the invariant type $\tpp(\alpha ' \cdot \alpha^{-1} / M)$, where $\alpha \models p|_M$ and $\alpha ' \models p|_{M \alpha}$. Then, the stabilizer $Stab_G(p')$ of $p '$ is $G^{00}$ as well.
\end{lem}

\begin{proof}
First note that $p '$ is the image of $p \otimes p$ by an $M$-definable map, so it is invariant as well. We wish to show that the stabilizer of $p '$ is $G^{00}$. It suffices to check that it contains $G^{00}$. So, let $N_1 \succeq M$, let $g \in G^{00}(N_1)$, let $\alpha$ realize $p|_{N_1}$ and let $\alpha '$ realize $p|_{N_1\alpha}$, so that $\alpha ' \cdot \alpha^{-1}$ realizes $p '|_{N_1}$. Then, by genericity of $p$, the element $g\cdot \alpha '$ realizes $p|_{N_1\alpha}$. So, by definition of $p '$, the product $g \cdot (\alpha ' \cdot \alpha^{-1})$ realizes $p '|_{N_1}$, as required.
\end{proof}

When dealing with definable groups, and generics, one key question is constructing definable groups and group homomorphisms from generic data. The following result is well-known and uses the existence of definable generics; some form of it was already used in the proof of Proposition 6.2 in \cite{HruPil-GpPFF}. Also, note that this has been improved substantially: Theorem 2.15 in \cite{MOS20} is a technical result which can be used to build definable group homomorphisms, with much weaker hypotheses.

\begin{prop}\label{prop_stabilizer_definable_types}
    Let $T$ be a complete theory, let $G$, $H$ be type-definable groups, and $M$ a sufficiently saturated model of $T$ over which $G$ and $H$ are defined. Let $c_1= (g_1, h_1), c_2=(g_2, h_2), c_3=(g_3, h_3)$ be elements of $G \times H$, with $c_3 = c_2 \cdot c_1$. Assume that the types $\tpp(c_2/ M c_1)$ and $\tpp(c_3 / M c_1)$ are $M$-definable, that $\tpp(g_1 / M)$ is a definable generic of $G$, and that $h_i$ is in $\acl(M g_i)$ for $i=1,2,3$.

    Then, there exists $M$-type-definable subgroups $H_1 \leq H$, and $H_0 \lhd H_1$, with $H_0$ finite, and $H_1$ the normalizer of $H_0$, and an $M$-definable group homomorphism $\psi: G^{00} \rightarrow H_1 / H_0$ such that, for all $\gamma$, $\gamma^{'}$ realizing $\tpp(c_1 / M)$, the element $\pi_H(\gamma^{'} \cdot \gamma^{-1})$ is in $H_1$, and the morphism $\psi$ maps $\pi_G(\gamma^{'} \cdot \gamma^{-1})$ to the coset of $\pi_H(\gamma^{'} \cdot \gamma^{-1})$ in $H_1 / H_0$.
\end{prop}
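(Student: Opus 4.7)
The plan is to carry out a classical Weil-style ``group chunk'' construction, adapted to accommodate the finite algebraic ambiguity coming from the hypothesis $h_i\in\acl(Mg_i)$. Informally, the intended homomorphism $\psi$ should send the ``generic difference'' $\pi_G(\gamma'\cdot\gamma^{-1})$ to $\pi_H(\gamma'\cdot\gamma^{-1})$, modulo a finite normal ``error'' subgroup $H_0$ that absorbs the algebraic multivalence in the $H$-coordinate.

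First I would construct $H_0$ and $H_1$. Fix a generic $\gamma=(g,h)\models\tpp(c_1/M)$ and set $F_g=\{h'\in H:(g,h')\models\tpp(c_1/M)\}$, which is finite by the hypothesis $h_1\in\acl(Mg_1)$. Let $H_0$ be the subgroup of $H$ generated by $\{h'h^{-1}:h,h'\in F_g\}$. Because $\tpp(g/M)$ is a definable generic, hence $M$-invariant, and the defining scheme of $\tpp(c_1/Mg)$ varies uniformly in $g$, this set is independent of the generic $g$ and is $M$-type-definable; finiteness of $F_g$ makes it a finite subgroup. Take $H_1$ to be the normalizer of $H_0$ in $H$, which is again $M$-type-definable.

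Next I would define $\psi$ on generic differences. Pick $\gamma_1\models\tpp(c_1/M)$ and take $\gamma_3$ realizing the canonical $M$-definable extension of $\tpp(c_3/Mc_1)$ over $M\gamma_1$; set $\gamma_2=\gamma_3\gamma_1^{-1}$, so that $\gamma_2$ realizes the canonical extension of $\tpp(c_2/Mc_1)$ and $c_3=c_2\cdot c_1$ becomes a first-order statement we can propagate. Writing $\gamma_2=(g_2,h_2)$, Lemma~\ref{lemma_indep_products_of_generics} applied to the definable generic $\tpp(g_1/M)$ shows that $g_2\in G^{00}$. Define $\psi(g_2):=h_2\cdot H_0$. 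Well-definedness modulo $H_0$ follows from the construction of $H_0$: any alternative algebraic choice for $h_1$ or $h_3$ over $Mg_1$ respectively $Mg_3$ changes $h_2=h_3h_1^{-1}$ by a product of ratios of conjugates, which by definition lies in $H_0$; in particular $h_2$ normalizes $H_0$, so $h_2\in H_1$.

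Finally, I would extend $\psi$ to all of $G^{00}$ and verify that it is an $M$-definable group homomorphism. By Fact~\ref{genericity} combined with Lemma~\ref{lemma_indep_products_of_generics}, every element of $G^{00}$ is a product of two such generic differences, so $\psi$ extends multiplicatively; the associativity/compatibility needed to check this is well-defined and homomorphic is encoded in the relation $c_3=c_2\cdot c_1$, which precisely says that the product of two generic elements of $G\times H$ is again a generic with componentwise-product in $H$. $M$-definability of the graph of $\psi$ is inherited from the $M$-definability of $\tpp(c_2/Mc_1)$ and $\tpp(c_3/Mc_1)$, which provides a uniform first-order description of the relation $\psi(g_2)=h_2H_0$ via the defining schemes. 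The hardest step will be the verification that $\psi$ truly takes values in $H_1/H_0$ and is a genuine homomorphism on all of $G^{00}$, rather than only on a generic set: both hinge on tight tracking of how the finite algebraic ambiguity of the $H$-coordinate interacts with multiplication, and it is exactly the definability of $\tpp(c_2/Mc_1)$ and $\tpp(c_3/Mc_1)$ that allows this tracking to be carried out uniformly, type-definably, and over the base $M$.
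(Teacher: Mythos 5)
Your construction of $H_0$ is where the argument breaks down. You take the group generated by the finite ratio set $\{h'h^{-1} : h, h' \in F_g\}$, but there is no reason for this to be finite or type-definable: a finite $\acl$-orbit $F_g$ is not in general a coset of a subgroup, so the ratio set is not itself a group, and a subgroup of a type-definable group generated by finitely many elements is typically infinite. Even granting finiteness, your well-definedness check for $\psi$ has two unaddressed holes: the ambiguity in $h_3$ consists of conjugates over $Mg_3$ with respect to $\tpp(c_3/M)$, which is a priori a \emph{different} type from $\tpp(c_1/M)$, so those ratios are not obviously in your $H_0$; and the computation $h_2'h_2^{-1}=h_3'(h_1')^{-1}h_1h_3^{-1}$ involves a conjugation by $h_3$, which requires normality of $H_0$ and $h_3$ lying in its normalizer at a point where neither has been proved. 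Finally, the Weil-style extension of $\psi$ from generic differences to all of $G^{00}$ is asserted, not carried out, and that is precisely the delicate step.

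The idea you are missing is to package all the generic differences into a single type-definable group $S=\mathrm{Stab}_r(\tpp(c_2/M))\leq G\times H$: since $c_1$ right-translates $\tpp(c_2/M)$ to $\tpp(c_3/M)$, every $\gamma'\cdot\gamma^{-1}$ with $\gamma,\gamma'\models\tpp(c_1/M)$ lies in $S$. The paper then defines $H_0=\{h\in H : (1,h)\in S\}$, which is automatically a type-definable subgroup (a kernel of a restricted projection, not a finitely generated group), and proves its finiteness by an independence argument: for $h\in H_0$ and $c_2=(g_2,h_2)$ realizing the nonforking extension of $\tpp(c_2/M)$ over $Mh$, both $h_2$ and $h_2h$ lie in $\acl(Mg_2)$, so $h_2H_0$ consists of conjugates of $h_2$ over $Mg_2$ and is therefore finite. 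Lemma~\ref{lemma_indep_products_of_generics} gives $\pi_G(S)\supseteq G^{00}$, and the image of $S\cap(G^{00}\times H_1)$ in $G^{00}\times (H_1/H_0)$ is then a type-definable subgroup with surjective $G$-projection and trivial fiber over $1_G$, hence automatically the graph of the desired $M$-definable homomorphism $\psi$ — no hand-crafted multiplicative extension is needed.
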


\begin{proof}\setcounter{claim}{0}
    For $i=1,2,3$, let $p_i = \tpp(g_i / M)$, and $q_i = \tpp(g_i, h_i / M)$. From the hypotheses, we know that $c_1 \in Stab_r(q_2, q_3)$. Thus, for all $\gamma$, $\gamma^{'}$ realizing $\tpp(c_1 / M)$, the product $\gamma^{'} \cdot \gamma^{-1}$ belongs to the $M$-type-definable subgroup $S=Stab_r(q_2) $. We wish to define the morphism $\psi$ using the subgroup $S$. 

    \begin{claim}
        The projection $\pi_G(S)$ of $S$ to $G$ contains $G^{00}$. 
    \end{claim}
    \begin{proof}
        Since $\tpp(g_1 / M)$ is definable, and $h_1 \in \acl(M g_1)$, the type $q_1=\tpp(g_1, h_1 / M)$ is also definable. Let $q^{'}_1$ denote the $M$-definable type which is the image of $q_1 \otimes q_1$ under the multiplication map, as in Lemma \ref{lemma_indep_products_of_generics}. 
        Then, by Lemma \ref{lemma_indep_products_of_generics}, the type ${\pi_G}_*(q^{'}_1)$ is a definable generic of $G$, which concentrates on $G^{00}$. 
        Also, we know that $q_1'$ concentrates on $S=Stab_r(q_2)$, which consists of the elements $g\in G$ with $q_2\cdot g = q_2$. Thus, the type-definable group $\pi_G(S) \leq G$ contains the subgroup generated by ${\pi_G}_*(q_1')$, which is, by the properties above, equal to $G^{00}$. 
    \end{proof}

Now, let $H_0 = \lbrace h \in H \, | \, (1, h) \in S \rbrace$, and let $H_1 \leq H$ be the normalizer of $H_0$.

\begin{claim}
    The group $\lbrace 1 \rbrace \times H_0$ is finite and normal in $S$.
\end{claim}
\begin{proof}
    The fact that $\lbrace 1 \rbrace \times H_0$ is normal in $S$ follows from the definition. Let us prove finiteness. 
    Let $h \in H_0$. To simplify notations, assume that $c_2= (g_2, h_2)$ realizes $q_2|_{Mh}$. Then, the element $(g_2, h_2 \cdot h)$ realizes $q_2|_{Mh}$, in particular, we have $h_2 \cdot h \in \acl(M g_2)$. Hence, we have $h \in \acl(M g_2 h_2)$. Then, the independence $g_2 h_2 \forkindep_M h$ implies that $h \in \acl(M) = M$. Since $h \in H_0$ was arbitrary, this implies the finiteness of $H_0$, as required.
\end{proof}

Thus, since $S$ normalizes $\lbrace 1 \rbrace \times H_0$, and $H_1$ is the normalizer of $H_0$, we have $S \subseteq G \times H_1$, and so, for all $\gamma$, $\gamma^{'}$ realizing $\tpp(c_1 / M)$, the element $\pi_H(\gamma^{'} \cdot \gamma^{-1})$ is in $\pi_H(S) \leq H_1$. Let $T$ be the image of the group $S\cap (G^{00} \times H_1)$ in the quotient $G^{00} \times H_1 / H_0$. Then, the type-definable subgroup $T \leq G^{00} \times H_1 / H_0$ defines the graph of an $M$-definable group homomorphism $\psi: G^{00} \rightarrow H_1 / H_0$. The property of the morphism $\psi$ follows from the fact that for all $\gamma$, $\gamma^{'}$ realizing $\tpp(c_1 / M)$, the product $\gamma^{'} \cdot \gamma^{-1}$ belongs to $S$.    
\end{proof}
The following fact is from \cite{MOS20}, which provides a group homomorphism in theories extending fields within a generalized tame setting. 
\begin{fact}\label{group_chunk_ntp2}(\cite[Theorem 2.19]{MOS20}) Let $T$ be an $\NTP$-theory, extending the theory of fields which is algebraically bounded. Assume that any model of $T$ is definably closed within its field-theoretic algebraic closure. Let $G$ be a group definable in an $\omega$-saturated $M\models T$, and assume that $G$ has a strongly $f$-generic type over $M$. Then, there exist an $M$-definable algebraic group $H$ and an $M$-definable group homomorphism $\iota: G_M^{00}\to H$ with a finite kernel.
\end{fact}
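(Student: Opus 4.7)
\medskip

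The plan is to construct the algebraic group $H$ via a Hrushovski–Weil style group chunk argument, using the strongly $f$-generic type $p$ of $G$ over $M$ as input. First, I would record that, by Fact \ref{genericity} (ii), the stabilizer data gives $G_M^{00} = \mathrm{Stab}(p) = (p \cdot p^{-1})^2$, so every element of $G_M^{00}$ is a product of two generic translates. Fix $a_1 \models p|M$ and $a_2 \models p|M a_1$, and set $g = a_2 \cdot a_1^{-1} \in G_M^{00}$. One then verifies, using the $S_1$-property of the ideal of non–strongly-$f$-generic formulas (Fact \ref{genericity_ideal}) together with Fact \ref{S_1_wide_implies_nonforking}, that the invariant type of $g$ over $M$ is again an $f$-generic of $G_M^{00}$ with the required independence properties: $g$ is independent from $a_1$ over $M$, and $a_1, a_2, g$ are pairwise interalgebraic over $M$ with any one of them.

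The second step is to pass from the definable data to algebraic varieties. Since $T$ extends the theory of fields, the group $G$ is a subset of some $\mathrm{VF}^n$, so $a_1, a_2, g$ are field tuples. The hypothesis that $T$ is algebraically bounded, together with definable-closedness of models inside the field-theoretic algebraic closure, means that the model-theoretic algebraic closure of any tuple agrees with its field-theoretic algebraic closure. In particular, there exist $M$-irreducible algebraic varieties $V_1, V_2, V_3$ such that $a_i$ (respectively $g$) is a field-theoretically generic point, and the multiplication $G \times G \to G$ restricts, on a Zariski-dense and definable subset, to a dominant rational map $V_1 \times V_2 \dashrightarrow V_3$. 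The key content is that the independence relations among $a_1, a_2, g$ translate to birational conditions on these varieties, giving a pre-group law defined on generic points.

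The third step is to assemble this birational data into an actual algebraic group $H$ over $M$. This is exactly the Hrushovski–Weil group chunk theorem: a generically defined, generically associative rational group law on an irreducible variety extends to a unique algebraic group structure on a birationally equivalent variety. After checking the genericity, associativity, and inversion compatibility of the rational operation coming from $G$ (which follow from the independence properties of the generic type $p$), I obtain an $M$-definable algebraic group $H$ and an $M$-definable partial map $\iota_0$ sending generic elements of $G$ to generic elements of $H$, compatible with multiplication. Then Proposition \ref{prop_stabilizer_definable_types}, or rather its strengthening indicated in the remark before Fact \ref{group_chunk_ntp2}, allows one to upgrade this germ-level compatibility to an honest $M$-definable group homomorphism $\iota : G_M^{00} \to H$, because any two generic representations of the same element of $G_M^{00}$ are sent to the same element of $H$ modulo an algebraic correction.

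Finally, for the finiteness of $\ker \iota$: if $x \in \ker \iota$, then for $a_1 \models p|Mx$ one has $\iota_0(x \cdot a_1) = \iota_0(a_1)$, so $x \cdot a_1$ and $a_1$ lie in the same fiber of $\iota_0$, which is a finite algebraic set over $M(a_1)$. Hence $x \cdot a_1 \in \mathrm{acl}(M a_1) = M(a_1)^{\mathrm{alg}}$, and by algebraic boundedness the map $x \mapsto x \cdot a_1$ lands in a bounded, in fact algebraic, set; this forces $x \in \mathrm{acl}(M)$, and by $\omega$-saturation of $M$ the kernel is finite. The main obstacle, as expected, is the group chunk synthesis of step three: in the $\mathrm{NTP}_2$ setting one cannot appeal directly to the stable or $\mathrm{NIP}$ versions of Hrushovski–Weil, and instead one must use the finer generic chunk machinery of \cite{MOS20}, which is precisely Theorem 2.15 there; all the preceding steps are designed to verify its hypotheses.
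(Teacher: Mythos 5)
Note first that this statement is presented in the paper as a \emph{Fact}, cited from \cite[Theorem~2.19]{MOS20}, with no proof supplied in the paper itself; so there is no internal argument to compare against, only the cited one. Your reconstruction captures the right overall strategy --- a Hrushovski--Weil group-chunk synthesis out of the strong $f$-generic configuration, using the $S_1$-ideal stabilizer machinery of \cite{MOS20} --- but one step as written would not go through.

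In your second step you claim that group multiplication restricts, on generics, to a dominant \emph{rational} map $V_1 \times V_2 \dashrightarrow V_3$. The hypotheses do not give this. Algebraic boundedness plus definable-closedness of models in their field-theoretic algebraic closures yield only that $g = a_2 \cdot a_1^{-1}$ is field-theoretically \emph{algebraic} over $M(a_1,a_2)$ (and symmetrically in $a_1, a_2, g$); these hypotheses control $\dcl$ of a \emph{model} inside its algebraic closure, not $\dcl(M a_1 a_2)$ inside the subfield $M(a_1,a_2)$. What one actually extracts is a generically finite algebraic \emph{correspondence}, and this correspondence degree is precisely what produces the finite, rather than trivial, kernel of $\iota$; phrasing it as a rational map erases the mechanism behind the finite kernel and would misleadingly suggest $\iota$ is injective. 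The group-chunk input therefore has to be the version adapted to correspondences, and the descent of the resulting algebraic group from $M^{\alg}$ to $M$ (where the loci need not be geometrically irreducible) is where $\omega$-saturation of $M$ does real work. With that corrected, the remaining steps are essentially sound: you rightly note that the promotion from a generic germ to an honest homomorphism must rest on Theorem~2.15 of \cite{MOS20} rather than Proposition~\ref{prop_stabilizer_definable_types}, the latter requiring definable types that are unavailable in general $\NTP$; and your kernel argument --- $x \in \acl(Ma_1)$ together with non-forking of $\tpp(a_1/Mx)$ over $M$ forcing $x \in \acl(M)$ --- is valid in any theory, with type-definability of the kernel and $\omega$-saturation of $M$ then giving finiteness.
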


\section{Residual domination}\label{Residual Domination}

From now on, we fix a complete theory $T$ of henselian valued fields of equicharacteristic zero in the language $\mathcal{L}$ with power residue sorts given in Definition \ref{language}. 

For the rest of the paper, we use the following notation.
\begin{notation}\label{notation_main_2}
We will use $\mathcal{L}_0$ to denote the language of geometric sorts and $T_0$ to denote $\ACVF_{(0,0)}$, the theory of algebraically closed valued fields of equicharacteristic zero in $\mathcal{L}_0$. To indicate that we are working within $T_0$, we add a subscript $0$ to the corresponding notation. More precisely,

\begin{enumerate}
    \item $\U$ denotes a universal model of $T$, and $\Ualg$ denotes the universal model of $T_0$ obtained by taking the algebraic closure of $\U$.
    \item We write $\acl_0$ and $\dcl_0$ for algebraic and definable closures in $T_0$, respectively.
    \item For a set $A$, we define $\rf_0(A)=\rf\cap\dcl_0(A)$ and $\vg_0(A)=\vg\cap\dcl_0(A)$,
    \item For any model $M\models T$, we write $M_0$ for its field-theoretic algebraic closure.
    
    \item Forking independence in $\U$ is denoted by $\forkindep$, and forking independence in $\Ualg$ is denoted by $\forkindep^0$.
    \item For a global type $p \in S(\U)$, let $p_0$ denote its corresponding global type in $\Ualg$; that is if $a \models p$ in $\U$, then $a \models p_0$ in $\Ualg$.
\end{enumerate} 
\end{notation}


\begin{defn}
    Let $K$ be a field. For tuples $a$, $b$, and a parameter set $C$ in $K$, we write $a\forkindep_{C}^{alg} b$ if every finite subtuple $a'$ of $a$ that is algebraically independent over $C$ in the field-theoretic sense remains algebraically independent over $Cb$ in the same sense.
\end{defn}

Recall that we denote the two sorted structure $(\rf,\mathcal{A})$ by $\rf_{\mathcal{A}}$. 

\begin{defn}\label{def_res_dom}
    Let $a$ and $C$ lie in $\U$. The type $\tpp(a/C)$ is\textit{residually dominated} if for every $b\in \U$, we have $\tpp(b/C\rf_{\mathcal{A}}(Ca))\vdash \tpp(b/Ca)$ whenever $\rf(Ca)\forkindep^{alg}_{\rf(C)}\rf(Cb)$.
\end{defn}

By Remark \ref{stabembedded_with_control_of_parameters}, one can show that the conditions $\tpp(b/C\rf_{A}(Ca)) \vdash \tpp(b/Ca)$ and $\tpp(a/C\rf_{A}(Ca))\vdash \tpp(a/Cb)$ are equivalent. 

\begin{notation}
For a valued field $L$, we write $\rf_L:=\{\res(a)$ $|$ $ a\in L\}$ and $\Gamma_L:= \{\val(a)$ $ \mid$ $ a\in L\}$.
\end{notation}
We know recall several algebraic statements regarding separated basis and domination.
\begin{fact}\label{generated_by_rf_and_gamma} (\cite[Proposition 2.7]{EHS23}) Let $L$, $M$, and $C$ be valued fields with $C \subseteq L \cap M$. Assume that: \begin{enumerate} 
\item $L$ or $M$ has the separated basis property over $C$, \item $\Gamma_L\cap \Gamma_M = \Gamma_C$,

\item $\rf_L$ and $\rf_M$ are linearly disjoint over $\rf_C$. 
\end{enumerate} 
Let $N = LM$ be the field generated by $L$ and $M$. Then $L$ and $M$ are linearly disjoint over $C$, $\Gamma_N$ is generated by $\Gamma_L$ and $\Gamma_M$ as groups, and $\rf_N$ is generated by $\rf_L$ and $\rf_M$ as fields. In particular, $\tpp_0(L/C\rf_M \Gamma_M)\vdash \tpp_0(L/M)$.
\end{fact}

The following result is essentially \cite[Theorem 3.8]{Vic22}. In the original statement, the base set is assumed to be a maximal model; here, we weaken this assumption assuming the existence of the separated basis property. For our purposes, we use a stronger assumption on the value group. Aside from these modifications, the proof remains essentially unchanged, but we include it here for the sake of completeness.

\begin{fact}\label{predomination_residual_domination}
    Let $L,M\subset \U$ be substructures and $C\subseteq L\cap M$ be a subfield. Assume that $L$ or $M$ has the separated basis property over $C$. Furthermore, assume $\vg(L)=\vg(C)$ and $\rf_M$ and $\rf_L$ are linearly disjoint over $\rf_C$. Then $\tpp(L/C \rf_{\mathcal{A}}(M) \vg(M))\vdash \tpp(L/M)$.
\end{fact}
\begin{proof}
   Let $L'\models \tpp(C\rf_{\A}(M)\Gamma(M))$ and let $\sigma: LM \to L'M'$ be an $\mathcal{L}$-isomorphism, sending $L$ to $L'$, $M\to M'$ and fixing $C\cup \rf_{\A}(M)\cup\Gamma(M)$. 
   By Fact \ref{generated_by_rf_and_gamma}, there exists a valued field isomorphism $\tau: L'M'\to L'M$, fixing $L'$ and extending $\sigma^{-1}|_{M'}$. Set  $h=\tau\circ \sigma$. Then $h$ is a valued field isomorphism between $LM$ and $L'M$ fixing $M$ and sending $L$ to $L'$. It remains to show that $h$ is an $\mathcal{L}$-isomorphism; that is, it preserves all residue maps $\res_n$.

   Let $a,b\in LM$ with $\res^n(a)=\res^n(b)$. By Fact \ref{generated_by_rf_and_gamma}, there exist $l\in L$ and $m\in M$ with $\val(a)=\val(l)+\val(m)$. By assumption, there is also $c\in \dcl(C)$ with $\val(c)=\val(l)$. 
   Let $l'=lc^{-1}$ and $m'=mc$ and let $x=\frac{a}{l'm'}$. Note that $\val(x)$, $\val(l')$ and $\val(m')$ all lie in $n\Gamma$. Thus, $$\res^n(a)=\pi_n(\res(x))\res^{n}(l')\res^n(m').$$
   Similarly, we can find $y\in \mathcal{O}_{LM}^{\times}$, $l''\in L$ and $m''\in M$ such that $$\res^n(b)=\pi_n(\res(y))\res^n(l'')\res^n(m'').$$
   
By Fact \ref{generated_by_rf_and_gamma}, $\res(x)$ and $\res(y)$ lie in $\rf_L\rf_M$. Moreover, $\res^n(m')$ and $\res^n(m'')$ are elements of $\A(M)$, so we can conclude that $\res^n(a)=\res^n(b)$ can be represented by a formula in $\tpp(L/C\rf_{\A}(M)\Gamma(M))$. 
Since $h$ extends an $\mathcal{L}$-elementary map $\sigma$ that is the identity map on $C\cup \rf_{\A}(M)\cup \Gamma(M)$, we conclude that $\res^n(h(a))=\res^n(h(b))$; therefore $h$ extends to an $\mathcal{L}$-definable map via $\res^n(x)\to \res^n(h(x))$. By Fact \ref{QE_val_A}, $h$ is elementary and can be extended to an $\mathcal{L}$-isomorphism of $\U$.\end{proof}

 This yields the following result, which will be useful later.
 \begin{prop}\label{dcl_in_k} Let $C$ be a subfield and $a$ be a tuple of valued field elements. Let $L=dcl_0(Ca)$ such that $L$ is a regular extension of $C$. Assume that $\tpp_0(a/C)$ is stably dominated. Then for any tuple $b$ from the valued sort with $a \forkindep_M b$ or $b \forkindep_M a$, we have $\rf_0(Mab) = \dcl_0(\rf_0(Ma), \rf_0(Mb))$.

\end{prop}

\begin{proof}

Define $L_1=\dcl_0(Ca)$ and $L_2=\dcl_0(Cb)$. Given that either $a\forkindep_C b$ or $b\forkindep_C a$, we have $\rf_{L_1}\forkindep_{\rf_{C}}^{alg}\rf_{L_2}$. Since $\rf_{L_1}$ is a regular extension of $\rf_{C}$, this implies by Fact \ref{regularity} below that $\rf_{L_1}$ and $\rf_{L_2}$ are linearly disjoint over $\rf_C$. By Fact \ref{algebraic_stabdom}, we have $\Gamma_{L_1}=\Gamma_C$ and that $L_1$ is separated over $M$. Then we can apply Fact \ref{generated_by_rf_and_gamma}; for $N=L_1L_2$, $\rf_{N}$ is generated by $\rf_{L_1}$ and $\rf_{L_2}$ as fields. This is equivalent to $\rf_0(Cab)=\dcl_0(\rf_0(Ca), \rf_0(Cb))$. \end{proof}

Notice that Fact \ref{predomination_residual_domination} does not state that $\tpp(L/C)$ is residually dominated, we also need to consider the parameters outside of the valued field sort. We will show that it is indeed enough to consider the parameters inside the valued field sort. 


\begin{thm}\label{resolution_full} Let $a$ be a tuple in $\U
$ and $C$ be a parameter set in the valued field sort. Then the following are equivalent:
\begin{itemize}
\item[(i)] For any finite tuple $b\in\U$, if $\rf(Ca)\forkindep_{\rf(C)}^{alg}\rf(Cb)$, then $\tpp(a/C\rf_{\mathcal{A}}(Cb))\vdash\tpp(a/Cb)$,
\item[(ii)] For any finite tuple $b\in \vf(\U)$, if $\rf(Ca)\forkindep_{\rf(C)}^{alg}\rf(Cb)$, then $\tpp(a/C\rf_{\mathcal{A}}(Cb))\vdash\tpp(a/Cb)$.
\end{itemize}

\end{thm}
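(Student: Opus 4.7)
The direction (i)$\Rightarrow$(ii) is immediate, as (ii) is the restriction of (i) to finite tuples from $\vf(\U)$. For (ii)$\Rightarrow$(i), the strategy is to reduce an arbitrary finite tuple $b$ to a valued-field tuple using the partial resolution of Lemma~\ref{resolution_partial}, handling the power-residue coordinates of $b$ separately via Lemma~\ref{trivial_type_implication}.

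Fix $b\in\U$ finite with $\rf(Ca)\forkindep^{alg}_{\rf(C)}\rf(Cb)$, and split $b=(b_1,b_2)$ where $b_2$ collects the $\A$-coordinates of $b$ and $b_1$ the remaining coordinates. By the construction in the proof of Lemma~\ref{resolution_partial} applied to the tuple $b_1$ (which contains no $\A$-element), fix a set $B_0\subseteq\vf(\U)$ with $Cb_1\subseteq\dcl(B_0)$, $\rf_\A(B_0)=\rf_\A(Cb_1)$ and $\Gamma(B_0)=\Gamma(Cb_1)$. For each finite $b'\subseteq B_0$ we have $\rf(Cb')\subseteq\rf(B_0)\subseteq\rf(Cb)$, so the hypothesis transfers to $b'$ and (ii) yields $\tpp(a/C\rf_\A(Cb'))\vdash\tpp(a/Cb')$. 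Taking the union over all finite $b'\subseteq B_0$ gives $\tpp(a/C\rf_\A(B_0))\vdash\tpp(a/B_0)$, and since $\rf_\A(B_0)\subseteq\rf_\A(Cb)$ one obtains
\[(\ast)\quad \tpp(a/C\rf_\A(Cb))\vdash\tpp(a/B_0).\]

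To incorporate $b_2$, apply Lemma~\ref{trivial_type_implication} with base $B_0\subseteq\vf$ and the $\A$-tuple $b_2$: $\tpp(a/B_0\rf_\A(B_0b_2))\vdash\tpp(a/B_0b_2)$. The strong form of stable embeddedness (Remark~\ref{stabembedded_with_control_of_parameters}) gives $\rf_\A(B_0b_2)=\rf_\A(Cb_1b_2)=\rf_\A(Cb)$, so this reads
\[(\ast\ast)\quad \tpp(a/B_0\rf_\A(Cb))\vdash\tpp(a/B_0b_2).\]
Since $Cb\subseteq\dcl(B_0b_2)$, chaining $(\ast)$ and $(\ast\ast)$ yields the desired $\tpp(a/C\rf_\A(Cb))\vdash\tpp(a/Cb)$.

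The delicate point is this chaining: to apply $(\ast\ast)$ one must upgrade the conclusion of $(\ast)$ by enriching its right-hand side with the parameters $\rf_\A(Cb)$, which are already present in the base of the left-hand side. I expect this to be the main technical hurdle. It is justified by the orthogonality of $\rf_\A$ and $\Gamma$ together with their strong stable embeddedness with control of parameters (Remark~\ref{stabembedded_with_control_of_parameters}): every $\mathcal{L}$-formula over $B_0\cup\rf_\A(Cb)$ decomposes into a Boolean combination of $\rf_\A$- and $\Gamma$-formulas applied to terms in $\vf$-variables with $B_0$-parameters, whose truth at $a$ is jointly controlled by the types $\tpp(a/C\rf_\A(Cb))$ and $\tpp(a/B_0)$ supplied by the hypothesis and $(\ast)$.
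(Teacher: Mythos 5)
The direction (i)$\Rightarrow$(ii) and the derivation of $(\ast)$ are fine: for finite $b'\subseteq B_0$ the hypothesis does transfer, and gluing over all such $b'$ gives $\tpp(a/C\rf_{\A}(B_0))\vdash\tpp(a/B_0)$, hence $(\ast)$. However, the step you flag yourself as ``the main technical hurdle'' is a genuine gap, and the closing paragraph does not close it. From $(\ast)$ you know $\tpp(a/B_0)$; from the hypothesis you know $\tpp(a/C\rf_{\A}(Cb))$. Decomposing a formula over $B_0\cup\rf_{\A}(Cb)$ via Remark~\ref{stabembedded_with_control_of_parameters} produces something of the shape $\psi(\vec{t}(x,e'),e'')$ with $e'\in B_0$, $e''\in\rf_{\A}(Cb)$, and $\vec{t}(x,e')$ taking values in $\rf_{\A}\cup\vg$. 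The truth of $\psi(\vec{t}(a,e'),e'')$ depends on $\tpp\bigl(\vec{t}(a,e')/\rf_{\A}(B_0)\,\vg(B_0)\,e''\bigr)$. But $\tpp(a/B_0)$ controls only $\tpp\bigl(\vec{t}(a,e')/\rf_{\A}(B_0)\,\vg(B_0)\bigr)$ --- it says nothing about $e''\notin\dcl(B_0)$ --- and $\tpp(a/C\rf_{\A}(Cb))$ says nothing about $e'\notin\dcl(C\rf_{\A}(Cb))$. So the two types do not ``jointly control'' the value of $\psi$; there is no reason the required implication $\tpp(a/C\rf_{\A}(Cb))\vdash\tpp(a/B_0\rf_{\A}(Cb))$ should follow. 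Put model-theoretically: if $\sigma\in\Aut(\U/C\rf_{\A}(Cb))$, then $(\ast)$ gives some $\rho\in\Aut(\U/B_0)$ with $\rho\sigma(a)=a$, but you have no control over the action of $\rho$ on $\rf_{\A}(Cb)\setminus\rf_{\A}(B_0)$, so you cannot conclude $\sigma(a)\equiv_{B_0\rf_{\A}(Cb)}a$.

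The paper avoids this chaining problem altogether, and the difference is essential. It works with the $\acl$-closed resolution $B=\acl(B_0)$, so that $\rf_{\A}(B)=\rf_{\A}(\acl(Cb))\subseteq\acl(Cb)$. Given $\varphi(x,b)\in\tpp(a/Cb)$, it is equivalent to a formula $\varphi'(x,b')$ with $b'\subseteq B$ a finite $\vf$-tuple; applying (ii) to $b'$ yields an implicant $\psi(x,d)$ with $d\in\rf_{\A}(Cb')\subseteq\rf_{\A}(B)\subseteq\acl(Cb)$. Crucially, $d$ has only finitely many conjugates $X=\{d_1,\dots,d_m\}$ over $Cb$, and each $\psi(x,d_i)$ still implies $\varphi(x,b)$ since $\varphi$ is over $Cb$. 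The set $X\subseteq\rf_{\A}$ is $Cb$-definable, hence $\rf_{\A}(Cb)$-definable by stable embeddedness, so $\bigvee_{i}\psi(x,d_i)$ is $C\rf_{\A}(Cb)$-definable and implies $\varphi$. This ``finite conjugate'' trick is the key idea, and your proposal is missing it. Replacing your chaining step by this argument (and you can indeed use your $B_0$ rather than $B$ if you like, since then $d\in\rf_{\A}(B_0)=\rf_{\A}(Cb_1)\subseteq\rf_{\A}(Cb)$ directly, with no conjugates even needed) would repair the proof of the core case.

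One smaller remark: your instinct to split $b=(b_1,b_2)$ and treat the $\A$-coordinates separately is reasonable, since Lemma~\ref{resolution_partial} only applies to tuples without $\A$-coordinates. But the way you re-attach $b_2$ is exactly where the unjustified chaining enters, so that part also needs to be rethought along the lines above.
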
 

\begin{proof} The direction $(i) \Rightarrow (ii)$ is obvious. 
For the converse direction, take a tuple $b \in \U$ with $\rf(Ca) \forkindep_{\rf(C)}^{\alg} \rf(Cb)$. 
We choose a model $M$ containing $Cb$ such that $\rf(Ca) \forkindep_{C \rf(Cb)}^{alg} \rf(M)$. By transitivity of $\forkindep^{\alg}$, it follows that $\rf(Ca) \forkindep_{\rf(C)}^{\alg} \rf(M)$. By $(ii)$, we then have $\tpp(a / C \rf_{\mathcal{A}}(M)) \vdash \tpp(a / M)$, which is equivalent to $\tpp(M / \rf_{\mathcal{A}}(Ca)) \vdash \tpp(M / Ca)$. 
It immediately follows that $\tpp(b / C \rf_{\mathcal{A}}(Ca)) \vdash \tpp(b / Ca)$, equivalently $\tpp(a / C \rf_{\mathcal{A}}(Cb)) \vdash \tpp(a / Cb)$.\end{proof}
Finally, using Fact \ref{predomination_residual_domination} and Theorem \ref{resolution_full}, we obtain the following corollary.

\begin{cor}\label{full_residual_domination} Let $a$ be a tuple in the valued field sort, and let $C$ be a subset of the valued field. Assume that $L:=\dcl(Ca)$ is separated over $C$, $\Gamma(L)=\Gamma(C)$, and $\rf(L)$ is a regular extension of $\rf(C)$. Then  $\tpp(a/C)$ is residually dominated. \end{cor}

\subsection{Equivalence of Residual Domination and Stable Domination}

 In this section, our main result is the equivalence between residual domination in the henselian field $\U$ and stable domination in the algebraically closed valued field containing $\U$ as a substructure. Along the way, we also extend the results of Section 2 in \cite{EHS23} to the henselian setting. 
 
 We will need the following results from $\cite{EHS23}$.
 
\begin{fact}\label{type_implications}(\cite[Lemma 1.13]{EHS23}) In any theory, let $C, F$ and $L$ be subsets of a universal model with $C\subseteq L \cap F.$ Then $\tpp(L/C)\vdash\tpp(L/F)$ if and only if $\tpp(F/C)\vdash \tpp(F/L)$.
\end{fact}

\begin{fact}\label{type_implications2}(\cite[Lemma 1.19]{EHS23})
 Let $C\subseteq L$ be valued fields such that $L$ is a regular extension of $C$ and $L$ is henselian. Then $\tpp_0(L/C)\vdash \tpp_0(L/C^{alg})$.
\end{fact}

\begin{fact}\label{regularity}(\cite[VIII, 4.12]{La02})
Let $C\subseteq L$ be fields with $L$ regular over $C$. Then for any field $M$, $L\forkindep_{C}^{alg} M$ implies that $L$ and $M$ are linearly disjoint over $C$. 
\end{fact}
\begin{notation}
    For a subset $A$ of a field $K$, we define $\alg(A):=\mathbb{Q}(A)^{\alg}$, where $\mathbb{Q}(A)$ is the field generated by $A$, and $\mathbb{Q}(A)^{\alg}$ is its field-theoretic algebraic closure.
\end{notation} 
 \begin{lem}\label{linear_disjoint} Let $C,F$ and $L$ be valued fields in $\U$ such that $C\subseteq F\cap L$. Assume that $L$ is a regular extension of $C$ and that $\tpp(L/C)\vdash \tpp(L/F)$. Then $L$ and $F$ are linearly disjoint over $C$.
\end{lem}
\begin{proof}
Suppose on the contrary that $L$ and $F$ are not linearly disjoint over $C$. By Fact \ref{regularity}, there exists a finite tuple $l_1,\cdots, l_n\in L$ which is algebraically independent over $C$ but algebraically dependent over $F$. We may assume $l_1\in \alg(Fl_1,\cdots, l_n)$, and hence $l_1\in \acl(Fl_1, \cdots, l_n)$. Let $X$ be a set of all conjugates of $l_1$ over $F l_1,\cdots, l_n$ inside $\U$. Since $\tpp(L/C)\vdash\tpp(L/F)$, any automorphism fixing $Cl_2,\cdots, l_n$ and sending $l_1$ to some $m$ extends to a one fixing $Fl_2,\cdots, l_n$ and sending $l_1$ to $m$. But, by assumption one can choose $m\not\in X$, a contradiction. 
\end{proof}
 
We now show that if a type is residually dominated, then its realization does not add any new information to the value group.

\begin{lem}\label{initial_orthogonal_to_Gamma}
    Let $a$ be a tuple in $\vf(\U)$ and $C$ be a subfield such that $\tpp(a/C)$ is residually dominated. Then $\Gamma(Ca)=\Gamma(C)$.
\end{lem}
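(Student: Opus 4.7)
I would argue by contradiction. Assume $\gamma \in \Gamma(Ca) \setminus \Gamma(C)$. Since $\gamma \notin \dcl(C)$, by saturation of $\U$ one can find $\sigma \in \Aut(\U/C)$ with $\delta_2 := \sigma(\gamma) \neq \gamma$; writing $\delta_1 := \gamma$, we have $\tpp(\delta_1/C) = \tpp(\delta_2/C)$, whereas $\gamma \in \dcl(Ca)$ forces $\tpp(\delta_1/Ca) \neq \tpp(\delta_2/Ca)$. It therefore suffices to show the implication $\tpp(\delta/C) \vdash \tpp(\delta/Ca)$ for every $\delta \in \Gamma$, since this implication fails at the pair $\delta_1,\delta_2$.

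To establish the implication, I would apply the definition of residual domination to $b := \delta$. The side hypothesis $\rf(Ca) \forkindep^{alg}_{\rf(C)} \rf(C\delta)$ becomes trivial once one knows $\rf(C\delta) = \rf(C)$: orthogonality of $\rf_{\A}$ and $\Gamma$ from Fact \ref{QE_val_A} yields $\tpp(r,\delta/C) = \tpp(r/C) \cup \tpp(\delta/C)$ for every $r \in \rf$, so any $C$-conjugate of an element of $\dcl(C\delta) \cap \rf$ can be realized by an automorphism fixing $\delta$, forcing such an element to lie in $\dcl(C) \cap \rf = \rf(C)$. Residual domination then gives $\tpp(\delta/C\rf_{\A}(Ca)) \vdash \tpp(\delta/Ca)$. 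The final step is to identify $\tpp(\delta/C\rf_{\A}(Ca))$ with $\tpp(\delta/C)$: by Remark \ref{stabembedded_with_control_of_parameters}, any $C\rf_{\A}(Ca)$-definable subset of $\Gamma$ is defined by a $\Gamma$-formula with parameter in $\dcl(C\rf_{\A}(Ca)) \cap \Gamma$, and the symmetric version of the orthogonality argument above shows that this intersection equals $\Gamma(C)$. Hence such a formula is already $C$-definable, and the contradiction is complete.

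The plan contains no conceptually hard step; the main care is sorted bookkeeping. Both key identities $\rf(C\delta) = \rf(C)$ and $\dcl(C\rf_{\A}(Ca)) \cap \Gamma = \Gamma(C)$ are instances of the principle that a parameter drawn from one of the orthogonal stably embedded sorts $\rf_{\A}$, $\Gamma$ cannot enlarge the internal definable closure in the other. This is precisely the strengthened form of orthogonality provided by Fact \ref{QE_val_A} and Remark \ref{stabembedded_with_control_of_parameters}, and the whole argument would collapse without it.
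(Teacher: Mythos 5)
Your proposal is correct and takes essentially the same route as the paper: both establish $\rf(C\gamma)=\rf(C)$ by orthogonality and Remark \ref{stabembedded_with_control_of_parameters}, apply residual domination to obtain $\tpp(\gamma/C\rf_{\A}(Ca)) \vdash \tpp(\gamma/Ca)$, and then invoke orthogonality a second time to pin $\Gamma$-elements of $\dcl(C\rf_{\A}(Ca))$ inside $\Gamma(C)$. The only difference is cosmetic: the paper reaches the contradiction directly by noting that $\gamma\in\dcl(Ca)$ plus the type implication forces $\gamma\in\dcl(C\rf_{\A}(Ca))$, hence $\gamma\in\Gamma(C)$, whereas you take a small detour through a conjugate pair $\delta_1,\delta_2$.
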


\begin{proof}
    Suppose on the contrary that there exists $\gamma \in \Gamma(Ca) \setminus \Gamma(C)$. First note that $\rf(C\gamma)=\rf(C)$. Let $\alpha\in \rf(C\gamma)$ and let $\varphi(x,\gamma,d)$ be formula witnessing this, where $d$ is a tuple in $\dcl(C)$. 
    By orthogonality of $\rf_{\mathcal{A}}$ to $\Gamma$ and Remark \ref{stabembedded_with_control_of_parameters}, there is a formula $\psi(x,d')$ in the sort $\rf_{\A}$, with $d'\in \rf_{\A}(C)$ such that $\alpha$ is the unique realization. 
    Hence, $\alpha\in \rf(C)$ and thus $\rf(C)=\rf(C\gamma)$.
   Since $\rf(Ca) \forkindep_{\rf(C)}^{alg} \rf(C\gamma)$ holds trivially, residually domination gives $\tpp(\gamma / C \rf_{\mathcal{A}}(Ca)) \vdash \tpp(\gamma / Ca)$. But by orthogonality, if $\gamma \in \dcl(C \rf_{\mathcal{A}}(Ca))$, then $\gamma \in \Gamma(C)$, contradicting the assumption that $\gamma\not\in \Gamma(C)$.\end{proof}

\begin{lem}\label{separated_basis} Let $C\subseteq L$ be subfields of $\U$. Assume that $L$ is a regular extension of $C$ and $\tpp(L/C)$ is residually dominated. Then for any maximal immediate extension $F$ of $C$ in $\U$, we have $\tpp_0(L/C)\vdash \tpp_0(L/F)$ in $\Ualg$.
\end{lem}
\begin{proof} Fix a maximal immediate extension $F$ of $C$. 
Since $\rf(F)=\rf(C)$, $\rf(L)\forkindep_{\rf(C)}^{alg}\rf(F)$ holds trivially. 
By residual domination, this gives $\tpp(L/C\rf_{\A}(F))\vdash \tpp(L/F)$. But, $\rf(F)=\rf(C)$, hence also $\A(C)=\A(F)$; so in fact $\tpp(L/C)\vdash \tpp(L/F)$ holds. 

We now show that every valued field isomorphism $\sigma: L\to L'$ fixing $C$ extends to a valued field isomorphism $\tau: LF\to L'F$, fixing $F$ pointwise. 
By Lemma \ref{linear_disjoint}, $L$ and $F$ are linearly disjoint over $C$. Thus there is a ring isomorphism $\tau: LF\to L'F$ fixing $F$ pointwise and extending $\sigma$.
Moreover, by \cite[Proposition 12.1]{HHM08}, any $F$-span of a finite tuple from $L$ has a separated basis over $F$. So, for every $d\in LF$, there exists a separated $F$-linearly independent tuple $l_1,\cdots, l_n \in L$ and coefficients $f_1,\cdots, f_n \in F$ such that $d=\sum f_i l_i$, and $\val(d)=\min\{ \val(f_i)+\val(l_i)\}$. Since $\Gamma(L)=\Gamma(C)=\Gamma(F)$, it follows that $\val(\tau(d))=\tau(\val(d))$. Thus $\tau$ is a valued field isomorphism etending $\sigma$, showing that $\tpp_0(L/C)\vdash \tpp_0(L/F)$.
\end{proof}

\begin{thm} \label{resdom_implies_separated_basis}
    Let $C$ be a field and $a$ be a tuple from the field sort. Assume that $\tpp(a/C)$ is residually dominated and $L:=\dcl(Ca)$ is a regular extension of $C$. Then $L$ is separated over $C$.
\end{thm}
\begin{proof} 
We first consider two easy cases. If  $a\in \dcl(C)$, then the result is immediate. If $\Gamma(C)$ is trivial, then Lemma \ref{initial_orthogonal_to_Gamma} implies $\Gamma(L)$ is also trivial, and the result follows immediately. Thus, assume $a\not\in \dcl(C)$ and the valuation on $C$ is nontrivial.

Since $L$ is $\dcl$-closed and is regular over $C$, we have $C=\dcl(C)$, and hence $C$ is henselian.
Let $F$ be a maximal immediate extension of $C$ inside $\U$. 
We may further assume that $F$ is an elementary extension of $C$ in the theory $T':=Th(C)$. Note that $T$ and $T'$ agree on  all quantifier free sentences with parameters from $C$.


Since $\tpp(a/C)$ is residually dominated, Lemma \ref{separated_basis} yields $\tpp_0(L/C) \vdash \tpp_0(L/F)$. Then by Lemma \ref{linear_disjoint}, $L$ and F are linearly disjoint over $C$. Since $F$ is maximally complete, by Fact \ref{SepBasisMaximal}, $LF$ is a separated extension of $F$. As $L$ and $F$ are linearly disjoint over $C$, it follows that $\Gamma_{LF}$ is generated by $\Gamma_L = \Gamma(L)$ and $\Gamma_F = \Gamma(F)$. By Lemma \ref{initial_orthogonal_to_Gamma}, we have $\Gamma(L) = \Gamma(C) = \Gamma(F)$. In particular, $\Gamma(LF)=\Gamma(C)$. 

We now prove by induction on $n$ that every $n$-dimensional $C$-vector subspace of $L$ has a separated basis  over $C$. 
Base case, $n = 1$, is immediate. 
Assume that $\vec{l} = (l_1, \dots, l_n) \in L^n$ is $C$-linearly independent and separated over $C$. 
Let $t \in L$ be such that $t$ does not lie in the $C$-span $ C \cdot \vec{l}$ of $\vec{l}$. 
Since $L$ and $F$ are linearly disjoint over $C$, the tuple $(l_1, \dots, l_{n},t)$ is $F$-linearly independent. 
As in the proof of \cite[Proposition 12.1]{HHM08}, there exists $u \in F \cdot \vec{l}$ such that $\val(u) = \max \{ \val(w - t) \text{ : } w \in F \cdot \vec{l}.\}$ Write $\gamma=\val(u)$. Since $\Gamma(LF)=\Gamma(C)$, we have $\gamma\in \Gamma(C)$.

Let $\theta(x_1,\cdots,x_n)\in \tpp_0(L/F)$ be the formula $\val(\sum\limits_{i=1}^{n} x_il_i)=\gamma$. Note that $\theta$ is quantifier-free. 
Since $\tpp_0(L/C)\vdash \tpp_0(L/F)$, by Fact \ref{type_implications}, we have $\tpp_0(F/C)\vdash \tpp_0(F/L)$. It follows that there is $\rho(x_1,\cdots, x_n)\in \tpp_0(F/C)$ that implies $\theta$ (in the theory $T$). As $F$ is an elementary extension of $C$ in the theory $T'$, there is a tuple $d_1,\cdots, d_n\in C$ with $T'\models \rho(d_1,\cdots, d_n)$. Since $\rho$ is quantifier-free, it follows that $T\models \rho(d_1,\cdots, d_n)$, as well. 

For $w=\sum\limits_{i=1}^n d_i l_{i}\in C\cdot \vec{l}$,
and set  $l_{n+1}=t-w$. Then, $l_1,\dots,l_n,l_{n+1}$ is separated $C$. Suppose not. Then there exist $c_1,\dots,c_{n+1}\in C$ such that $\val(\sum\limits_{i=1}^{n+1}c_il_i)>\min\{\val(c_{n+1}l_{n+1}), \val(\sum\limits_{i=1}^nc_il_i)\}$. Then, we have\begin{align*}
    \val(l_{n+1}+\sum\limits_{i=1}^nc_{n+1}^{-1}c_il_i)>\val(l_{n+1}),\\
    \val(t-(w-\sum\limits_{i=1}^nc_{n+1}^{-1}c_il_i))>\gamma
\end{align*}
This contradicts with the choice of $\gamma$. \end{proof}

The following is an analog of Fact \ref{algebraic_stabdom} for henselian fields of equicharacteristic zero. 

\begin{cor}\label{algebraic_resdom} Let $C$ be a subfield and $a$ be a tuple in the valued field sort of $\U$. Assume that $L:=\dcl(Ca)$ is a regular extension of $C$. Then the following are equivalent:

\begin{enumerate}
\item $\tpp(a/C)$ is residually dominated.
\item  $L$ is an unramified extension of $C$ and $L$ has the separated basis property over $C$. 
\end{enumerate}
\end{cor}
\begin{proof}
By Lemma \ref{initial_orthogonal_to_Gamma} and Theorem \ref{resdom_implies_separated_basis}, $1.$ implies $2.$ Conversely, since $L$ is henselian and the extension $L/C$ is regular, it follows that $\rf(L)$ is regular over $\rf(C)$. Then the result follows from Corollary \ref{full_residual_domination}. \end{proof}

Finally, we combine all the results to relate residual domination to stable domination.

\begin{thm}\label{stabdom_eqv_resdom} Let $a$ be a tuple in the valued field sort and $C$ be subfield of $\vf(\U)$. Assume that $C$ is $\acl$-closed in the valued field sort. Then the following are equivalent:
\begin{enumerate}
    \item $\tpp(a/C)$ is residually dominated in the $\mathcal{L}$-structure $\U$,
    \item $\tpp_0(a/C)$ is stably dominated in the $\mathcal{L}_0$-structure $\Ualg$.
\end{enumerate}
\end{thm}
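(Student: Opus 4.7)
My plan is to combine the two algebraic characterizations: Corollary \ref{algebraic_resdom} for residual domination in $\U$, and Fact \ref{algebraic_stabdom} for stable domination in $\Ualg$. Both express the respective domination in terms of $\dcl(Ca) \cap \vf$ being unramified over $C$ with the good separated basis property over $C$.

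First I would set up key identifications. The hypothesis $C = \acl(C) \cap \vf$, together with Fact \ref{algebraically_bounded}, implies that $C$ is algebraically closed as a pure field, hence $\rf(C)$ is also algebraically closed (by lifting monic polynomials to $\mathcal{O}_C$ and using that the valuation ring is integrally closed). Thus every field extension of $C$ (respectively of $\rf(C)$) is automatically regular, so the regularity hypothesis of both characterizations is automatic. Setting $L_0 := \dcl_0(Ca) \cap \vf(\Ualg) = C(a)$ (since $\dcl_0$ adds no new elements to a subfield of $\Ualg$ in characteristic zero) and $L := \dcl(Ca) \cap \vf(\U)$, I would identify $L$ as the henselization $L_0^h$ of $L_0$ inside $\U$, using Fact \ref{henselianity}, Remark \ref{automorphisms}, and saturation of $\U$. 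In particular, $L/L_0$ is an immediate algebraic extension, so $\Gamma(L) = \Gamma(L_0)$ and $\rf(L) = \rf(L_0)$.

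For $(1) \Rightarrow (2)$, Corollary \ref{algebraic_resdom} gives that $L$ is unramified with good separated basis over $C$, and both properties descend to $L_0 \subseteq L$ (unramified-ness from $\Gamma(L_0) \subseteq \Gamma(L) = \Gamma(C)$, and the good separated basis property from the observation that a basis of a finite-dimensional $C$-subspace of $L_0$ lies entirely inside $L_0$); Fact \ref{algebraic_stabdom} then yields stable domination. For $(2) \Rightarrow (1)$, Fact \ref{algebraic_stabdom} gives the analogous properties on $L_0$; unramified-ness transfers to $L$ via $\Gamma(L) = \Gamma(L_0)$, and, once the good separated basis property also transfers, Corollary \ref{algebraic_resdom} yields residual domination.

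The hard part is the transfer of the good separated basis property from $L_0$ to its henselization $L = L_0^h$. A naïve appeal to transitivity (Proposition \ref{transitivity_of_separated_basis}) would require separatedness of $L/L_0$, but $L_0$ is generally not henselian, so Fact \ref{Delon2} does not apply, and an arbitrary immediate algebraic extension need not be separated (degree-$2$ immediate extensions can create linearly dependent residues that force cancellation). Instead, I would argue directly: for any finite-dimensional $C$-subspace $V \subseteq L$, iteratively rescaling elements of $V$ by $C$-scalars and subtracting well-chosen $L_0$-approximations (available by the density of $L_0$ in its henselization) produces, using the good separated basis of $L_0/C$ at each step, a separated basis of $V$ over $C$; the ``good'' aspect then follows from $\Gamma(L) = \Gamma(C)$ by a final $C$-rescaling to normalize all basis valuations.
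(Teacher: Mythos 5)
Your overall strategy — combining Corollary~\ref{algebraic_resdom} with Fact~\ref{algebraic_stabdom} — is exactly the paper's, but the execution has a genuine gap caused by a wrong identification of definable closure in $\ACVF$.

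You assert that $\dcl_0(Ca)\cap\vf(\Ualg)=C(a)$ because ``$\dcl_0$ adds no new elements to a subfield''. This is false: in $\ACVF$ of equicharacteristic zero, $\dcl_0$ of a subfield, intersected with the valued field sort, is its henselization. So the paper's $L:=\dcl_0(Ca)$ is already a henselian field. Once you know this, the ``hard part'' you flag simply evaporates: since $M:=\dcl(Ca)$ is an algebraic extension of the henselian field $L$, Fact~\ref{Delon2} applies directly and gives that $M/L$ is separated, and Proposition~\ref{transitivity_of_separated_basis} then transfers the separated basis property from $L/C$ to $M/C$. Your proposed replacement argument (``iteratively rescaling\ldots subtracting well-chosen $L_0$-approximations, available by density of $L_0$ in its henselization'') is both vague and unnecessary; and since you have also collapsed $\dcl(Ca)$ and $\dcl_0(Ca)$ into a single field, you would not even be working with the right objects. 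These two fields genuinely differ in general (e.g.\ in a real closed valued field, $\dcl(Ca)$ contains positive square roots that $\dcl_0(Ca)$ does not), and it is precisely the passage from the smaller, henselian $L=\dcl_0(Ca)$ to the larger $M=\dcl(Ca)$ that requires an argument.

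A secondary inaccuracy: $C=\acl(C)\cap\vf(\U)$ does not make $C$ algebraically closed as a field; it only makes $C$ relatively algebraically closed in $\U$, since $\acl(C)\cap\vf$ coincides with $C^{\alg}\cap\U$ by algebraic boundedness, not with $C^{\alg}$. That weaker statement is what gives regularity of the extensions $L/C$ and $M/C$ (both lie inside $\U$), and it is all the paper uses — so this particular misstep is harmless to the structure of the argument, but the justification you give is wrong. In short, the plan is right, but you should replace the mistaken computation of $\dcl_0(Ca)$ with the fact that it is the henselization, after which the $(2)\Rightarrow(1)$ direction reduces to Facts~\ref{Delon2} and Proposition~\ref{transitivity_of_separated_basis} exactly as in the paper.
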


\begin{proof}
    Let $L:= \dcl_0(Ca)$ and $M:=\dcl(Ca)$ First assume that $\tpp(a/C)$ is residually dominated. Since $C$ is $\acl$-closed in $\U$ and $L,M\subseteq \U$, both $L$ and $M$ are regular extensions of $C$. By Corollary \ref{algebraic_resdom}, $M$ has the separated basis property over $C$ and $\Gamma(M)=\Gamma(C)$. As $M\supseteq L$, it follows that $\Gamma_0(L)=\Gamma_0(C)$ and $L$ has the separated basis property over $C$. By Fact \ref{algebraic_stabdom} then, $\tpp_0(a/C)$ is stably dominated.

Conversely, assume that $\tpp_0(a/C)$ is stably dominated. By Fact \ref{algebraic_stabdom}, $L$ is separated over $C$ and $\Gamma_0(L)=\Gamma_0(C)$ in $\Ualg$. It follows that $\Gamma(M)=\Gamma(C)$, and moreover, since $M=\dcl(Ca)$ is an algebraic extension of $L$ and $L$ is henselian, by Fact \ref{Delon2}, $M$ is separated over $L$. Then, by Proposition \ref{transitivity_of_separated_basis}, $M$ is separated over $C$, as well. 
Finally, we apply  Corollary \ref{algebraic_resdom} to conclude that $\tpp(a/C)$ is residually dominated. \end{proof}


\subsection{Properties of Residually Dominated Types}\label{sec_prop_resdom}
In this section, we use Theorem \ref{stabdom_eqv_resdom} to show that residually dominated types share properties similar to those of stably dominated types in $\ACVF$, as listed in \ref{stable domination}.

In \cite[Theorem 3.14]{EHM19}, the characterization of forking in $\ACVF$ over models is given under the assumption that $C$ is maximal. This maximality ensures the existence of the separated basis property. The proof goes through unchanged if we instead assume that the separated basis property holds over $C$.

\begin{prop}\label{forkindep_acvf} Let $C$ be a valued field, and let $a$ and $b$ be tuples from the field sort. Assume that $L := \dcl_0(Ca)$ has the separated basis property over $C$, $\rf_{L}$ is a regular extension of $\rf_C$, and $\Gamma_L/\Gamma_C$ is torsion-free (or $\Gamma_L\cap \Gamma_M=\Gamma_C$, where $M=\dcl_0(Mb)$). Then $a \forkindep^{0}_C b$ if and only if $\rf_0(Ca)\Gamma_0(Ca) \forkindep^{0}_C \rf_0(Cb)\Gamma_0(Cb)$ in the structure $\Ualg$.
    \end{prop}

When the type is residually dominated, the independence in $\Gamma$ comes for free, and (algebraic) independence in the residue field of $\U$ implies (forking) independence in the stable sorts.

\begin{lem}\label{independence_in_rf_implies_independence_in_st}
  Let $C$ be a valued field, with $C=\acl(C)$ in the field sort, and let $a$ and $b$ be tuples of field elements. Suppose that $\tpp(a/C)$ is residually dominated and $\rf(Ca) \forkindep_{\rf(C)}^{\alg} \rf(Cb)$. Then, $a \forkindep^{0}_C b$. In particular, $\st_C(Ca) \forkindep^{0}_C \st_C(Cb)$.
\end{lem}
\begin{proof}
Let $L=\dcl_0(Ca)$ and $M=\dcl_0(Cb)$. 
The residue field in $T_0$ is a model of $\ACF$. Thus, the algebraic independence in the residue field implies $\rf_L \forkindep_{C} \rf_M$ in $\Ualg$.  Since $\tpp(a/C)$ is residually dominated, Corollary \ref{algebraic_resdom} implies that $\Gamma(Ca) = \Gamma(C)$; therefore $\Gamma_0(C) = \Gamma_0(Ca)$ and $\Gamma_L \forkindep_C \Gamma_M$ holds trivially. Because $C$ is $\acl$-closed, $\rf_L$ is a regular extension of $\rf_C$. Applying Proposition \ref{forkindep_acvf}, we conclude that $a \forkindep^{0}_C b$. It immediately follows that $\st_C(Ca) \forkindep^{0}_C \st_C(Cb)$.\end{proof}

Now, we show that residual domination is invariant under base changes. 

\begin{thm}\label{change_of_base}
Let $C \subseteq B$ be subfields that are $\acl$-closed in the valued field sort. Let $p$ be a global type that does not fork over $C$.
\begin{itemize}
    \item[(i)] If $p|C$ is residually dominated, then $p|B$ is also residually dominated.
    \item[(ii)] Suppose further that $p$ is $C$-invariant and $p|B$ is residually dominated. Then $p|C$ is also residually dominated.
\end{itemize}
\end{thm}

 \begin{proof} We start with $(i)$. Let $a \models p | C$. By Theorem \ref{stabdom_eqv_resdom}, $\tpp_0(a/C)$ is stably dominated and by Fact \ref{stabdom_acl_properties} $(ii)$, $\tpp_0(a/\acl_0(C))$ is stably dominated. 
 Let $q$ be the unique $\acl_0(C)$-definable extension of $\tpp_0(a/\acl_0(C))$, given by Fact \ref{stabdom_acl_properties}. 

First, we note that $q$ is $C$-invariant: by Fact \ref{type_implications2}, since  $L = \dcl_0(Ca)$ is henselian and regular over $C$, $\tpp_0(L/C) \vdash \tpp_0(L/\acl_0(C))$. Since $q$ is $\acl_0(C)$-invariant, it follows that it is $C$-invariant.

Let $M \preceq\U$ with $C\subseteq M$, and take $b \models p | M$. Since $p$ does not fork over $C$, we have $\rf(Cb) \forkindep_{\rf(C)}^{\alg} \rf(M)$. By Lemma \ref{independence_in_rf_implies_independence_in_st}, this gives $\st_C(Cb)\forkindep_C\st_C(M)$. By Fact \ref{stabdom_acl_properties} $(i)$, we obtain $b\models q|M$. Since $M$ is arbitrary, we conclude $p_0|\U= q|\U$. Now, let $B\supseteq C$ be $\acl$-closed. By Fact \ref{StabDomExtensions}, $q|B$ is stably dominated. Since $p_0|B=q|B$, applying Theorem \ref{stabdom_eqv_resdom}, we conclude that $p | B$ is residually dominated.

For $(ii)$, assume that $p$ is $C$-invariant and $p|B$ residually dominated. By Theorem \ref{stabdom_eqv_resdom}, we have $p_0|B$ is stably dominated, and by Fact \ref{stabdom_acl_properties}, it extends to a global $\acl_0(B)$-definable type $q$ in $\Ualg$. Similar to above, we aim to apply Fact \ref{StabDomExtensions}, so it suffices to show $q$ is $\acl_0(C)$-invariant. 

By construction, $p_0|B=q|B$. As in part $(i)$, $q|Bc=p_0|Bc$ for any tuple $c$ in $\U$, thus $p_0|\U=q|\U$. By Fact \ref{type_implications2}, $p_0|\U=q|\U$ is, in fact, $\mathcal{L}_0(B)$-definable. Given an $\mathcal{L}_0$-formula $\phi(x;y)$, let $d_{\phi}(y)$ be an $\mathcal{L}_0(B)$-formula that satisfies:
\begin{center}
    for all $m$ in $\Ualg$, $\phi(x;m)\in p_0$ if and only if $\models d_{\phi}(m)$.
\end{center}
By $C$-invariance of $p$ in $\U$, the set $d_{\phi}(\U)$ is $\mathcal{L}(C)$-definable: 
for any $\sigma\in Aut(\U/C)$ and $m\in d_{\phi}(\U)$, $\phi(x;\sigma(m))\in p_0$, hence $\sigma(m)\in d_{\phi}(\U)$.

Let $d$ be a code of $d_{\phi}(\U)$ in $\U^{eq}$, and let $d'$ be a code of $d_\phi(\Ualg)$ in $\Ualg$. Note that we can choose $d'\in\U^{eq}$, because the set it codes is definable over $B\subseteq \U$. Then, we have $d'\in \dcl^{eq}(d)\subset \dcl^{eq}(C)$. It follows that the set $d_{\phi}(\Ualg)$ is $\mathcal{L}_0(\acl_0(C))$-definable, and hence $p_0$ is $\acl_0(C)$-invariant. 

By Fact \ref{StabDomExtensions}, we conclude that $p_0|\acl_0(C)$ is stably dominated. By Facts \ref{stabdom_acl_properties} and \ref{type_implications2}, we have $p_0|C$ is stably dominated as well. Finally, by Theorem \ref{stabdom_eqv_resdom}, $p|C$ is residually dominated, as desired. \end{proof}

Using this theorem, we show that residually dominated types are orthogonal to $\Gamma$ in the following sense.

\begin{thm}\label{orthogonality_resdom}
    Let $C$ be a valued field such that $C$ is $\acl$-closed in the valued field sort and assume that $p$ is a $C$-invariant type. Then $p|C$ is residually dominated if and only if for any model $M\supseteq C$ and $a\models p|M$, we have $\Gamma(Ma)=\Gamma(M)$.
\end{thm}
\begin{proof}

First, assume that $p|C$ is residually dominated. Let $M$ be a model containing $C$. Since $M$ is $\acl$-closed, it follows by Theorem \ref{change_of_base} $(i)$ that $p|M$ is residually dominated. Consequently, by Corollary \ref{algebraic_resdom}, for $a \models p|M$, we have $\Gamma(M) = \Gamma(Ma)$.

For the converse, fix a maximally complete model $M\supseteq C$, and let $a \models p|M$. By assumption, we have $\Gamma(Ma)=\Gamma(M)$. By Fact \ref{SepBasisMaximal}, the field $L = \dcl(Ma)$ has the separated basis property over $M$. Since $M$ is a $\acl$-closed, $L$ is a regular extension of $M$. Thus, applying Corollary \ref{algebraic_resdom}, we conclude that $\tpp(a/M)$ is residually dominated. Finally, as $p$ is $C$-invariant, Theorem \ref{change_of_base} $(ii)$ yields that $p | C$ is residually dominated. \end{proof}

\section{Residually Dominated Groups}\label{Residually dominated groups}

In this section, we introduce groups controlled by the residue field, analogous to how stably dominated groups in \cite{HR19} are controlled by their stable part. Recall that we use the Notation \ref{notation_main_2}. 






In \cite{HR19}, a definable group $G$ is said to be \emph{stably dominated} if it has a stably dominated, definable generic. Here, we adapt this definition to arbitrary pure henselian valued fields as follows.

\begin{defn} A definable group $G$ in $\U$ is \textit{residually dominated} if there exists a small model $M$ such that $G$ admits a strongly $f$-generic type over $M$, where this type is residually dominated over $M$.


\end{defn}

\begin{exmp}\begin{enumerate}
\item Any definably amenable group in the residue field, whose theory is $\NTP$, is residually dominated. This follows from the fact that definably amenable groups in an $\NTP$ theory have strongly $f$-generics, and any type in the residue field is residually dominated. 
\item Assume that $\U$ is a real closed-valued field in which case $\A_n=\{1,-1\}$ for even $n$ and $\A_n=\{1\}$ for odd $n$. Let $G = \{ g\in \mathcal{O} $ :  $ \val(g)=0$ and $g>0\}$, the multiplicative group of the positive elements of the valuation ring $\mathcal{O}$. Then there are exactly two residually dominated generics of $G$, namely $p_{\infty}$ and $p_{0^+}$ defined as follows:
    \begin{enumerate}
        \item $p_{\infty}$ is defined as for all set $C$ and $a\models p_{\infty}|C$, $\res(a)\not\in \acl(C)$ and $\res(a)$ is bigger than any element of $\rf(\acl(C))$,
        \item $p_{0^{+}}$ is defined as for all sets $C$ and $a\models p_{0^{+}}|C$, $\res(a)\not\in \acl(C)$, $\res(a)>0$, and it is infinitesimally close to $0$.
        
    \end{enumerate}
    \end{enumerate}
\end{exmp}

As shown in \cite{HR19}, stable domination is witnessed by a group homomorphism. 

\begin{fact}\label{homomorphismACVF}
(Proposition 4.6 and Lemma 4.9, \cite{HR19})
  \textit{Let $G$ be a pro-$C$-definable, stably dominated group over $C$ in the structure $\Ualg$. Then:}
    
    \begin{enumerate}
\item \textit{There exists a pro-$C$-definable group $\mathfrak{g}$ in $\st_C$ and a pro-$C$-definable surjective group homomorphism $\theta: G\to\mathfrak{g}$ such that every generic of $G$ is dominated by $\theta.$ Moreover, the pair $(\mathfrak{g},\theta)$ is universal, i.e. if $\mathfrak{g}'$ is a pro-$C$-definable group in $\st_C$ and  $\theta': G\to \mathfrak{g}'$ is a pro-$C$-definable map over $C$ then there exist a unique homomorphism $\tau:\mathfrak{g}\to \mathfrak{g}'$ with $\tau \circ \theta=\theta'$. }

\item \textit{If $\mathfrak{g}$ is pro-$C$-definable in $\st_C$, and $\theta: G \rightarrow \mathfrak{g}$ is a surjective pro-$C$-definable group homomorphism which dominates $G$, then, any type $p \in S_G(C)$ is generic in $G$ if and only if its image $\theta_*(p)$ is generic in $\mathfrak{g}$.}
    \end{enumerate}
\end{fact}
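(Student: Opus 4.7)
The plan is to extract a stable shadow of $G$ from a generic type. Since $G$ is stably dominated, it admits a generic $p \in S_G(C)$ whose restriction to $C$ is stably dominated; by Fact \ref{stabdom_acl_properties} such a $p$ has a unique $C$-definable global extension. For $a \models p|C$, set $\theta(a) := \st_C(Ca)$, regarded as a pro-$C$-definable element of $\st_C$. A standard group-chunk observation gives that every element of $G^{00}$ is a product of two generic elements, so $\theta$ extends unambiguously to $G^{00}$ (and then to all of $G$ via a representative of each coset) by the recipe $\theta(a\cdot b) = \theta(a) \cdot \theta(b)$ for suitably independent generics $a,b$.

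The technical heart is endowing $\mathfrak{g} := \theta(G)$ with a pro-$C$-definable group law inside $\st_C$. For independent generic realizations $a, b$ of $p$, the assignment $(\theta(a), \theta(b)) \mapsto \theta(a\cdot b)$ defines a partial operation on $\mathfrak{g} \times \mathfrak{g}$; stable domination of $p$ guarantees that this assignment depends only on the $\st_C$-data, and a Weil-style group chunk argument carried out inside the stable structure $\st_C$ packages this partial operation into a pro-$C$-definable group $\mathfrak{g}$ for which $\theta$ is a genuine group homomorphism. The domination of generics by $\theta$ is then immediate from the stable domination of $p$: for $a$ generic and any tuple $b$ with $\theta(a) \forkindep_C \st_C(Cb)$, stable domination yields $\tpp(b/C\theta(a)) \vdash \tpp(b/Ca)$.

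For universality, let $\theta' : G \to \mathfrak{g}'$ be another pro-$C$-definable homomorphism into a pro-$C$-definable group in $\st_C$. Because $\mathfrak{g}'$ is stably embedded in the ambient structure, any $C$-definable function on $G$ valued in $\mathfrak{g}'$ is determined on each element $a$ by $\st_C(Ca)$, hence $\theta'(a) \in \dcl(C\theta(a))$. This produces a unique pro-$C$-definable map $\tau : \mathfrak{g} \to \mathfrak{g}'$ with $\theta' = \tau \circ \theta$; the fact that $\tau$ is a group homomorphism follows from $\theta$ and $\theta'$ being such and $\theta$ being surjective onto generics of $\mathfrak{g}$.

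Part (2) follows by combining domination with the good behaviour of generics under pushforward: if $p \in S_G(C)$ is generic, the translates $g \cdot p$ have bounded orbit and hence so do the translates of $\theta_*(p)$, which in the stable group $\mathfrak{g}$ means $\theta_*(p)$ is generic. Conversely, if $\theta_*(p)$ is generic in the stable group $\mathfrak{g}$, it has a canonical invariant global extension; the domination property of $\theta$ lifts this to an invariant global extension of $p$ whose $G$-orbit is bounded, so $p$ is generic in $G$. The main obstacle in this whole program is the Weil group-chunk step yielding $\mathfrak{g}$ with a coherent group law inside $\st_C$ — one must check carefully that the partial multiplication read off from $\theta$ on independent generics is consistent, using stable domination to transport information between $G$ and its stable shadow; once that is in place, universality, domination, and the genericity correspondence all fall out of the stable embedding of $\st_C$.
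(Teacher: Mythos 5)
This statement is a Fact quoted from \cite{HR19} (Proposition 4.6 and Lemma 4.9); the present paper does not supply its own proof of it, so there is no internal argument to benchmark your proposal against. Your sketch does reproduce the outline of the HR19 construction: set $\theta(a)$ to be an enumeration of $\st_C(Ca)$ on a generic $a$, run a Hrushovski--Weil group-chunk argument inside $\st_C$, and deduce universality and the genericity correspondence from stable embeddedness.

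There is, however, a genuine gap at the critical step. You write that ``stable domination of $p$ guarantees that this assignment depends only on the $\st_C$-data,'' but what is actually needed is a nontrivial fact about stably dominated types: the germ at the generic of the map $b \mapsto \theta(a\cdot b)$ must be a \emph{strong germ}, i.e.\ one needs $\theta(a\cdot b) \in \dcl\bigl(C\,\theta(a)\,\theta(b)\bigr)$, with the multiplication represented by an actual pro-$C$-definable function of $(\theta(a),\theta(b))$. Stable domination by itself only gives $\Aut$-invariance of germs over the stable shadow, not definability over it, and without this promotion the Weil chunk has no well-defined partial operation to package into a group. This is precisely the point the paper flags just before its own Theorem \ref{HomomorphismResidue}, where it notes that HR19 ``use[s] the existence of strong germs of stably dominated types'' and that the henselian analogue substitutes Proposition \ref{dcl_in_k} for exactly that step. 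Your converse implication in Part (2) is similarly compressed: the phrase ``the domination property of $\theta$ lifts this'' hides the actual mechanism, which runs through the uniqueness of the generic of the connected stable group $\mathfrak{g}$ together with a translate-comparison argument inside $G$, rather than a direct lift of an invariant extension.
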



    \begin{lem}\label{lemma_fin_sat_implies_stationary}
        In any theory, let $q \in S(A)$ be stably dominated, where $A$ is any set of parameters. Assume that $q$ is finitely satisfiable in $A$. Then, $q$ admits a global $A$-definable extension, and is stationary.
    \end{lem}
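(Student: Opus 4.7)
Set $C = \acl(A)$ and pick $a \models q$. By Fact~\ref{stabdom_acl_properties}(ii), $\tpp(a/C)$ is stably dominated, so Fact~\ref{stabdom_acl_properties}(i) provides a unique global $C$-definable extension $p$ of $\tpp(a/C)$, characterized by $b \models p|B \iff \st_C(Cb) \forkindep_C \st_C(B)$ for all $B \supseteq C$. The plan is to show that $p$ descends to an $A$-definable extension of $q$ and is the unique global $\Aut(\U/A)$-invariant extension, which yields both the existence claim and stationarity.

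For the $A$-definability, first extend $q$ to a global coheir $\tilde{q}$ using the finite satisfiability hypothesis; any such $\tilde{q}$ is automatically $\Aut(\U/A)$-invariant and, since $\Aut(\U/C) \subseteq \Aut(\U/A)$, also $\Aut(\U/C)$-invariant. Choose $a \models \tilde{q}|C$, so $p$ is built from $\tilde{q}|C$. I will check $\tilde{q} = p$. For any $B \supseteq C$ and $b \models \tilde{q}|B$, the type $\tpp(b/B)$ is finitely satisfiable in $A \subseteq C$. Pulling back any formula of $\tpp(\st_C(Cb)/\st_C(B))$ through the $C$-definable functions realizing $\st_C(Cb)$ from $b$ yields a formula over $B$ realized by some $a^* \in A$, whose stable part over $C$ then realizes the original formula in $\st_C(C)$. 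Hence the projection $\tpp(\st_C(Cb)/\st_C(B))$ is finitely satisfiable in $\st_C(C)$, and since $\st_C$ is stable and $\st_C(C)$ is $\acl$-closed in $\st_C$, this forces $\st_C(Cb) \forkindep_C \st_C(B)$; the characterization of $p$ then gives $b \models p|B$. So $\tilde{q} = p$ is both $C$-definable and $\Aut(\U/A)$-invariant, and the defining $\mathcal{L}(C)$-formulas descend to $\mathcal{L}(A)$ via conjunctions over the finite $A$-orbits of their parameters.

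For stationarity, let $q'$ be any $\Aut(\U/A)$-invariant global extension of $q$. An $\Aut(\U/A)$-invariant completion of $q$ to $C$ is pinned down by $q$: for each formula $\phi(x,y)$ and each finite $A$-orbit $O$ in $C^{|y|}$, $A$-invariance forces an all-or-nothing decision for the family $\{\phi(x,c) : c \in O\}$, and which case holds is determined by whether the $A$-definable formula $\bigwedge_{c \in O} \phi(x,c)$ or its $A$-definable negation $\neg \bigvee_{c \in O} \phi(x,c)$ lies in $q$. In particular $q'|C = \tilde{q}|C = p|C$. By the same stable-embeddedness argument as above, the projection of $q'$ to $\st_C$ is $\st_C(C)$-invariant in $\st_C$ (lifting $\tau \in \Aut(\st_C/\st_C(C))$ to $\bar\tau \in \Aut(\U/C) \subseteq \Aut(\U/A)$, which fixes $q'$), hence non-forking over the $\acl$-closed set $\st_C(C)$. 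So $\st_C(Cb) \forkindep_C \st_C(B)$ for any $B \supseteq C$ and $b \models q'|B$; combined with $\tpp(b/C) = p|C$ this forces $b \models p|B$, and hence $q' = p$. The main obstacle is the careful bookkeeping of the various bases---$A$, $C$, $\st_C(A)$, $\st_C(C)$---under the projection $b \mapsto \st_C(Cb)$, where one must invoke stable embeddedness of $\st_C$ together with the standard forking calculus in stable theories over $\acl$-closed parameter sets.
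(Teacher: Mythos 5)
Your proposal is correct, but its key steps differ from the paper's argument, even though both proofs share the same skeleton: extend $q$ to a coheir over $\mathcal{U}$, pass to $C = \acl(A)$ where Fact \ref{stabdom_acl_properties} applies, and identify the coheir with the canonical $C$-definable extension. Where the two proofs diverge is in how this identification is justified and in how stationarity of $q$ itself is extracted. The paper's route is to note that the coheir is a nonforking extension of the (stationary, because $C$ is $\acl$-closed) type over $C$, so by uniqueness it \emph{is} the canonical definable extension; being also $A$-invariant and stably dominated, it is then generically stable over $A$, hence $A$-definable, and stationarity of $q$ follows in one stroke from the Pillay--Tanovi\'c result on generically stable types. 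You instead avoid generic stability and the external citation entirely: you verify the identity with $p$ directly, by pulling finite satisfiability in $A$ through the $C$-definable functions enumerating $\st_C(Cb)$ to get finite satisfiability of the stable-part type in $\st_C(C)$, hence the independence $\st_C(Cb) \forkindep_C \st_C(B)$ demanded by the characterization in Fact \ref{stabdom_acl_properties}(i); and you prove stationarity by hand, first observing that $A$-invariance pins down the restriction of any invariant extension to $C$ via finite $A$-orbits, and then lifting automorphisms of $\st_C$ over $\st_C(C)$ through stable embeddedness to get the same independence for an arbitrary $A$-invariant extension $q'$. Your version is more elementary and self-contained (it never leaves the characterization of stable domination), while the paper's is shorter because it delegates the two nontrivial implications to established black boxes. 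The bookkeeping points you flag at the end, namely the descent of $\mathcal{L}(C)$-definitions to $\mathcal{L}(A)$ via conjunctions over finite $A$-orbits and the transfer of forking computations between $\mathcal{U}$ and the stably embedded reduct $\st_C$, are exactly the places to be careful, and your treatment of them is sound.
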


    \begin{proof}
        Let $B = \acl(A)$. Then, by compactness, the type $q$ admits an extension $q' \in S(B)$ which is finitely satisfiable in $A$. Indeed, the partial type 
        $q(x) \cup \lbrace \neg \phi(x) \, | \, \phi $ is over $B$ and has no points in $A \rbrace$ is consistent, otherwise $q$ would not be finitely satisfiable in $A$.

        On the other hand, the type $q'$ is stably dominated and stationary, because $B = \acl(A)$. It is also finitely satisfiable in $A$. Now, let $M$ be a sufficiently saturated model containing $B$. Let $q''$ denote the unique nonforking extension of $q'$ to $M$. Since some nonforking extension of $q'$ is finitely satisfiable in $A$, uniqueness implies that $q''$ is finitely satisfiable in $A$, thus $A$-invariant. By $A$-invariance, the type $q''$ is then generically stable over $A$. So, it is $A$-definable, and, by \cite[Proposition 2.1]{pillay-tanovic}, the restriction $q''|_A = q$ is stationary.   
    \end{proof}



Theorem \ref{HomomorphismResidue} below is an analogue to Fact \ref{homomorphismACVF} $(i)$. The construction of the pro-definable group homomorphism $\theta$ is nearly identical; however, instead of using the existence of strong germs of definable functions on stably dominated types, we apply Proposition \ref{dcl_in_k}, which allows us to remain within the residue field of $\U$. In the proof, we will also use the following fact.

\begin{fact}\label{nonforking_reduct}(\cite[Lemme 2.1]{BMW11}) Let $\mathcal{L}$ be any language, and let $\mathcal{L}'$ be a reduct of $\mathcal{L}$. Let $T$ and $T'$ be $\mathcal{L}$- and $\mathcal{L}'$-theories, respectively, with $T'$ stable. If $C$ is $\acl$-closed in $T$, and $a$ does not fork with $b$ over $C$ in $T$, then $a$ does not fork with $b$ over $C$ in $T'$.\end{fact}

\begin{thm}\label{HomomorphismResidue}
 Assume that $T$ is $\NTP$. Let $(G,\cdot)$ be a definable residually dominated group in the valued field sort of $\U$, with a strongly $f$ generic over $M$. Then, there exist an $\mathcal{L}_0(M)$-definable group $\mathfrak{g}$ in the residue field and a $\mathcal{L}(M)$-definable surjective group homomorphism $f: G_M^{00}\to \mathfrak{g}$ such that the strongly $f$-generics of $G_M^{00}$ are dominated via $f$. Namely, for each strongly $f$-generic $p$ of $G_M^{00}$ and tuples $a, b$ in $\U$ with $a \models p|M$, we have $\tpp(b/M f(a))\vdash \tpp(b/Ma)$ whenever $f(a)\forkindep_{\rf(M)}^{alg}\rf(Mb)$.
\end{thm}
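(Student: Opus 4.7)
The plan is to adapt \cite[Proposition 4.6]{HR19} (recalled in Fact \ref{homomorphismACVF}) to the residually dominated setting, using Theorem \ref{stabdom_eqv_resdom} to import machinery from $\ACVF$ and Proposition \ref{dcl_in_k} to keep the target inside the residue field. By Lemma \ref{generics_are_resdom}, every generic of $G^{00}_M$ is residually dominated, so fix such a generic $p \in S_G(M)$ and $a \models p|M$. Tentatively, take $f(a)$ to encode $\rf_{\A}(Ma)$ as a pro-$M$-definable tuple; formally, $f$ is the pro-limit of the family of all $M$-definable maps $G \to \rf_{\A}^n$ evaluated on $a$.

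To produce the group law, take $a \models p|M$, $a' \models p|Ma$, and set $a'' = a\cdot a'$; by Fact \ref{genericity}(ii) the element $a''$ is again generic in $G^{00}_M$. By Theorem \ref{stabdom_eqv_resdom} the $\ACVF$-type $p_0$ is stably dominated, and Lemma \ref{independence_in_rf_implies_independence_in_st} gives $a \forkindep^{0}_{M} a'$ in $\Ualg$. Proposition \ref{dcl_in_k} then yields
\[
\rf_0(Maa') \;=\; \dcl_0\!\bigl(\rf_0(Ma),\, \rf_0(Ma')\bigr),
\]
so that $f(a'') \in \dcl_0(f(a), f(a'))$. To upgrade this partial product into an honest $\ACVF$-definable group $\mathfrak{g}$ in the residue field, I would invoke Proposition \ref{prop_stabilizer_definable_types} in $\Ualg$ applied to the triple $(a,f(a)), (a',f(a')), (a'',f(a''))$: stable domination of $p_0$ together with Fact \ref{stabdom_acl_properties}(i) supplies the required $M$-definable generic, the containment $f(a) \in \dcl_0(Ma)$ meets the algebraicity hypothesis, and Lemma \ref{initial_orthogonal_to_Gamma} forces the image to lie in $\st_M \cap \rf_0$, producing an ACVF-definable group in the residue field (with $f$ as the associated homomorphism on $G^{00}_M$).

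For the domination statement, suppose $a \models p|M$ and $f(a) \forkindep^{\alg}_{\rf(M)} \rf(Mb)$. Since $\rf(Ma)$ is, up to coding, contained in $\dcl(Mf(a))$ by construction, the independence implies $\rf(Ma) \forkindep^{\alg}_{\rf(M)} \rf(Mb)$. Residual domination of $\tpp(a/M)$ then yields $\tpp(b/M\rf_{\A}(Ma)) \vdash \tpp(b/Ma)$, and since $\rf_{\A}(Ma) \subseteq \dcl(Mf(a))$ this sharpens to $\tpp(b/Mf(a)) \vdash \tpp(b/Ma)$, as required.

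The main obstacle I anticipate is the transfer between $\Ualg$ and $\U$: the group $\mathfrak{g}$ and the homomorphism $f$ are naturally constructed in $\Ualg$, while we need pro-$M$-definability in $\U$. This will be handled by combining Remark \ref{automorphisms} (every $\Aut(\U/M)$-automorphism lifts to $\Ualg$) with Remark \ref{stabembedded_with_control_of_parameters} (stable embeddedness of $\rf_{\A}$ with control of parameters), so that the graph of $f$, cut down to points of $\U$, is invariant under $\Aut(\U/M)$ and therefore pro-$M$-definable in $\U$. A secondary technicality is passing from a type-definable group in $\rf_0$ to an $\ACVF$-definable one, but since $\rf_0 \models \ACF$ is totally transcendental this reduction is automatic via the standard descending chain argument.
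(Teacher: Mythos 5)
Your overall strategy — use Theorem \ref{stabdom_eqv_resdom} and Lemma \ref{independence_in_rf_implies_independence_in_st} to import $\ACVF$ stable-domination machinery, and Proposition \ref{dcl_in_k} to show that the residue-field trace of a generic product lies in the definable closure of the traces of the factors — is correct and is exactly the engine behind the paper's proof. But there are two concrete gaps.

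First, you never invoke Fact \ref{group_chunk_ntp2} (the Montenegro–Onshuus–Simon theorem for $\mathrm{NTP}_2$ fields), which produces an $M$-definable algebraic group $H$ and a finite-kernel $M$-definable homomorphism $\iota: G_M^{00} \to H$. This step is not optional: $G$ is only definable in the $\mathcal{L}$-structure $\U$, not in $\Ualg$, so none of the group-theoretic constructions you want to run can be carried out in $\ACVF$ directly on $G$. The paper pushes $p$ forward to $q_0 = \iota_*(p_0)$ inside the algebraic group $H$, shows $q_0$ concentrates on the $\ACVF$-type-definable stabilizer $G_1 = \mathrm{Stab}_H(q_0)$, and only then applies Proposition \ref{dcl_in_k} to the $H$-multiplication (which \emph{is} $\ACVF$-definable) to get the group chunk law $F$ on the residue field. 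Without $\iota$ and $H$, the partial multiplication you extract from $a \cdot a'$ uses the group law of $G$, which is only $\mathcal{L}(M)$-definable; the group chunk you would obtain is therefore definable in $\U$ but not in $\ACVF$, so the resulting $\mathfrak{g}$ fails to be $\ACVF$-definable as the theorem requires.

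Second, Proposition \ref{prop_stabilizer_definable_types} is not the right tool to manufacture $\mathfrak{g}$: that proposition presupposes a type-definable target group $H$ (the $h_i$ must be elements of $H$) and produces a \emph{morphism} into a quotient of a subgroup of $H$, not a group from scratch. In your application the second coordinates $f(a)$ are just residue-field tuples with no prescribed group structure, so there is no $H$ to feed into the proposition — the argument is circular. The correct tool is the Hrushovski–Rideau-Kikuchi group-chunk reconstruction \cite[Proposition 3.15]{HR19}, which the paper uses precisely to build $\mathfrak{g}$ out of the partial ACVF-definable multiplication $F$ and the type $\pi = \theta_*(q_0)$. (Proposition \ref{prop_stabilizer_definable_types} does play a role in the paper, but in the universality statement of Theorem \ref{thm_acvf_enveloppe}, where a target group is given.)

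A smaller issue: with the paper's choice of $f = \theta \circ \iota$ (enumerating $\rf_0(M\iota(a))$ rather than all of $\rf_{\mathcal A}(Ma)$), one only has $\rf(Ma) \subseteq \mathrm{alg}(f(a))$, not $\rf_{\mathcal A}(Ma) \subseteq \dcl(Mf(a))$, so the passage from the independence $f(a) \forkindep^{\alg}_{\rf(M)}\rf(Mb)$ to the type implication $\tpp(b/Mf(a)) \vdash \tpp(b/Ma)$ requires the careful $\mathrm{RV}$-quantifier-elimination argument of the paper's Claim 2, rather than the direct appeal to residual domination of $\tpp(a/M)$ that you make. Your version of $f$ (encoding all of $\rf_{\mathcal A}(Ma)$) makes this last step trivial, but at the cost of making $\mathfrak{g}$ live partly in the $\mathcal A$-sorts, which are not $\ACVF$-definable — again losing $\ACVF$-definability of $\mathfrak{g}$.
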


\begin{proof}
\setcounter{claim}{0}
   Let $H$ be an $M$-definable algebraic group and $\iota:G_M^{00}\to H$ be an $M$-definable group homomorphism with finite kernel, as in Fact \ref{group_chunk_ntp2}. 

  Let $p \in S(\U)$ be a strongly $f$-generic of $G_M^{00}$, 
  and let $p_0$ be the corresponding $\mathcal{L}_0$-type in $\Ualg$. Then $\iota_* (p)$ concentrates on $H$, as $H$ is $\mathcal{L}_0(M)$-definable. It follows that $\iota_* (p_0)$ concentrates on $H$. By Theorem \ref{stabdom_eqv_resdom}, $p_0|M$ is stably dominated, and since $M$ is a model of $T$ and a substructure of $M_0$, the type $p_0|M$ is finitely satisfiable in $M$. Thus, by Lemma \ref{lemma_fin_sat_implies_stationary}, it is stationary, namely $p_0$ is its unique non-forking extension of $p_0|M$. 
  
  Let $q=\iota_{*} (p)$. Note that the corresponding $\mathcal{L}_0$-type $q_0$ equals to $\iota_{*} (p_0)$. Since $p$ is strongly f-generic in $G$, we know that $q$ is strongly f-generic in $\iota(G) \leq H$. Moreover, since $p \in G^{00}_M$, we have $q \in \iota(G)^{00}_M$. We will denote the multiplication of $H$ by $\cdot_H$. We also note that $\cdot_H$ is $\mathcal{L}_0$-definable.
    \begin{claim}\label{claim1_homres} $q_0$ concentrates on $G_1:=Stab_H(q_0)$.
 \end{claim}
    \begin{proof}
First, note that $Stab_H(q_0)$ is $\mathcal{L}_0(M)$-type-definable, thus it suffices to find a realization $b\models q_0|M$ that lies in $ Stab_H(q_0)$. For this, we will find independent realizations $b\models q_0|M$, $a\models q_0|Mb$ such that $b\cdot_{H} a \models q_0|Mb$.

Let $\mu$ be the ideal of definable subsets of $H$ that do not extend to a strongly $f$-generic over $M$. Then, $q$, being strongly $f$-generic, is not in $\mu$. 
In fact, by Fact \ref{genericity} $(ii)$, since $\iota(G)^{00}_M \setminus St(q)$ is contained in a union of $M$-definable sets in $\mu$, we know that $q$ concentrates on $St(q)$. 
So, let $b$ be a realization of $q |M$. 
Then, since $b \in St(q)$, the partial type over $Mb$, given by $q |M \cap b \cdot q|M$, is not in $\mu$. So, it extends into a global type $q' \in S(\U)$, \emph{such that $q'$ is generic over $M$}. Then, let $a$ realize $q'$, in particular $a \models q|M$. Now, by construction, both $\tpp(a/M)$ and $\tpp(b \cdot_H a/M)$ are equal to $q|M$. 
Also, by strong $f$-genericity of $q'$, we have $a \forkindep_M b$ and $b \cdot_H a \forkindep_M b$. 

We observe that these independence relations also hold in $\Ualg$. Let $a\models p|M$. Since $\iota(a)\in \acl_0(Ma)$ and $\tpp_0(a/M)$ is stably dominated, we have $q_0=f_{*}(p_0)$ is stably dominated. By Theorem \ref{stabdom_eqv_resdom}, $q$ is residually dominated. Then, Lemma \ref{independence_in_rf_implies_independence_in_st} implies $b\cdot_H a \forkindep_M^0 b$ and $a\forkindep_M^0 b$ in $\Ualg$. By stationarity of $q_0|M$, we conclude $b\cdot_H a \models q_0|Mb$ and $a \models q_0|Mb$, as desired. \end{proof}


By Fact \ref{genericity} $(ii)$, the group $G_M^{00}$ is generated by the set of realizations of $p|M$ and $p^{-1}|M$. Then, $\iota(G_M^{00})$ is generated by $q|M$ and $q^{-1}|M$. Since $q_0$ is the unique generic of $G_1$, we must have $q_0=(q_0)^{-1}$. Then, by above claim, $\iota(G_M^{00}) \leq G_1$.
\begin{claim}\label{claim2_homres}
        Each strongly $f$-generic of $G_M^{00}$ is residually dominated.
\end{claim}

   \begin{proof} Let $p,q,q_0$ and $G_1$ be as above. Fix a strongly $f$-generic type $r$ over $M$, and let $s := \iota_*(r)$. 
   Let $\mathfrak{g}$ be a pro-$\mathcal{L}_0$-definable group in the stable sorts, and let $\theta: G_1 \to \mathfrak{g}$ be the surjective $\mathcal{L}_0$-definable group homomorphism as in Fact \ref{homomorphismACVF} (1). We wish to show that $s|M$ is residually dominated.
   
     By the previous paragraph, we have $\iota(G_M^{00}) \leq G_1$. In particular, both $s$ and $s_0$ concentrate on $G_1$. Choose $b \in \iota(G_M^{00})(\U)$ such that $b \models q|M$, and let $a \in \iota(G_M^{00})(\U)$ realize $s|Mb$. Then $a \forkindep_M b$. Since $s$ is a strongly $f$-generic of $\iota(G_M^{00})$, we also have $b \cdot_H a \forkindep_M b$. 
   
   Note that $\mathfrak{g}$ is also definable in $\U$, since all parameters are in $\U$. 
   Then we have $\theta(b \cdot a) \forkindep_M \theta(b)$ in $\U$. 
   Because the reduct is stable, by Fact \ref{nonforking_reduct}, the non-forking independence also holds in $\Ualg$, and by symmetry in stable theories, we also have $\theta(b) \forkindep_M^0 \theta(b \cdot_H a)$, where $\tpp_0(\theta(b)/M)$ is stationary and extends to  the unique generic of $\mathfrak{g}$. It follows from Fact \ref{homomorphismACVF} $(2)$ that $b \models q_0|M(b \cdot_H a)$, and in particular $b \models q_0|M(b \cdot_H a)^{-1}$. By the genericity of $q_0$, the element $(a \cdot_H b)^{-1} \cdot_H b = a^{-1}$ realizes $q_0|M$. Since $q_0|M$ is stably dominated and $a^{-1} \in \acl_0(Ma)$, it follows that $\tpp_0(a/M) = s_0|M$ is stably dominated. Hence, by Theorem \ref{stabdom_eqv_resdom}, the type $s|M=\iota_*(r|M)$ is residually dominated. Since $\iota$ has finite kernel, the type $r|M$ is residually dominated as well. This concludes the proof.\end{proof}


Now, we will construct the desired group homomorphism. For a tuple $a$ in the valued field sort, let $\theta(a)$ be an enumeration of $\rf_0(Ma)$. Let  $a\models q_0|M$, $b\models q_0|Ma$ and $c = a \cdot_H b$. Then, $\theta(c) \in \rf_0(Mab)$. Since $q_0|M$ is stably dominated, by Proposition \ref{dcl_in_k}, $\theta(c) \in \dcl_0(\theta(a), \theta(b))$. Then, there exists an $\mathcal{L}_0(M)$-definable map $F$ such that $F(\theta(a),\theta(b))=\theta(c)$ for all independent realizations of $q_0|M$.

Define $\pi = \theta_{*}(q_0)$. 
By Claim \ref{claim1_homres}, for such $a,b$ and $c$ as above, $c\models q_0|Ma$. By Proposition \ref{dcl_in_k} again, we have $\theta(b)\in \dcl_0(\theta(a), \theta(c))$. Similarly, we can show that $\theta(a)\in \dcl_0(\theta(b),\theta(c))$. 
Then, $(F,
\pi)$ is a group chunk and by \cite[Proposition 3.15]{HR19}, one obtains a pro-$\mathcal{L}_0(M)$-definable group $\mathfrak{g}$ that lies in the residue field, the sort where the image of $F$ lies. 
By \cite[Proposition 3.16]{HR19} $\theta$ extends to a pro-$\mathcal{L}_0(M)$-definable map from $G_1$ to $\mathfrak{g}$, which we again denote it by $\theta$.

Let $f=\theta\circ \iota$.

\begin{claim}\label{claim2_homres}
        Each strongly $f$-generic of $G_M^{00}$ is residually dominated \emph{via $f$.} 
\end{claim}
\begin{proof}
    Fix a strongly $f$-generic $r$ of $G_M^{00}$, and let $b$ be a tuple such that $f(a)\forkindep_{\rf(M)}^{alg}\rf(Mb)$. Assume that $b\equiv_{Mf(a)} b'$. We will show that $b\equiv_{Ma}b'$. 
   
  By Theorem \ref{resolution_full}, we may assume that $b$ is a tuple in the valued field. We will first show that we have $b \equiv_{M\rf_{\A}(M\iota(a))} b'$. 
  In fact, by residual domination of $\tpp(\iota(a)/M)$, we have $\Gamma(M\iota(a)) = \Gamma(M)$. 
Then, for any $x \in \dcl(M\iota(a))$ with $\val(x) \in n\Gamma$, since $M$ is a model, there exists $b \in M$ such that $n\val(b) = \val(x)$. 
  Hence, $\res^n(x) = \pi_n\left(\res\left(\frac{x}{b^n}\right)\right) \in \rf(M\iota(a)) 
  .$ It follows that $\A(M\iota(a))\subset \dcl(\rf(M\iota(a))$.

As $\tpp(\iota(a)/M)$ is residually dominated and $\rf(M\iota(a))\forkindep_M^{alg}\rf(Mb')$, it follows that $b \equiv_{M\iota(a)} b'$. Therefore, it suffices to show that $b \equiv_{Ma} b'$.

   As $\iota$ is an $M$-definable, finite-to-one group homomorphism, we have $Ker(\iota)\subset M$. Let $a=a_1, \cdots, a_n$ be preimages of $\iota(a)$. Then, $a_i=m_ia$ for some $m_i\in Ker(\iota)\subset M$. In particular, $a\in \dcl(M\iota(a))$; $a\in \dcl((\sum_{i}m_i)\cdot a,M)\subset \dcl(M\iota(a))$. Thus, $\tpp(b/M\iota(a))\models \tpp(b/Ma)$, and $b\equiv_{Ma}b'$ follow immediately. \end{proof}

We now continue by following the arguments of the proof of \cite[Proposition 4.6]{HR19}. By \cite[Proposition 3.4]{HR19}, both $G_1$ and $\mathfrak{g}$ are pro-definable limit $\mathcal{L}_0(M)$-definable groups. Since $\mathfrak{g}$ is definable in a stable theory, as in the proof of \cite[Proposition 4.6]{HR19} $\theta$ is surjective; we write $\mathfrak{g}_i=\theta_i(G_1)$. By the same proof, we also know that there exists $i_0$ such that $\theta_i (b)\in \acl_0(M\theta_{i_0}(b))$ for all $i\geq i_0$, and $b\models q_0|M$, where $q_0$ is the  unique generic of $G_1$.

 Let $\tilde{f}=\theta_0\circ \iota: G_M^{00}\to \theta_{i_0}(G_1)$ and let $p$ be a strongly $f$- generic of $G_M^{00}$. We will show that $p$ is residually dominated via $\tilde{f}$. For $a\models p|M$ and $b$ with $\rf(Mb)\forkindep_M^{alg}\tilde{f}(a)$, since $f(a)\in \acl(\tilde{f}(a))$, we have $\rf(Mb)\forkindep^{alg}_M f(a)$. By Claim \ref{claim2_homres}, it suffices to show $f(a)\in \dcl(\tilde{f}(a))$, so that we have: $\tpp(b/M\tilde{f}(a))\vdash\tpp(b/Mf(a))\vdash\tpp(b/Ma).$
 
 First, we will show that the $\mathcal{L}(M)$-definable group homomorphisms $\theta_{k,i_0}:\theta_k(G_1)\to \theta_{i_0}(G_1)$ that define the pro-definable group $\theta(G_1)$ are finite-to-one for all $k\geq i_0$. 
 For $c,d \in G_1$ with $d$ realizes the generic $q_0|M$, assume that $\theta_{k,i_0}(\theta_k(c)) = \theta_{i_0}(d)$. 
Then $\theta_k(d) \in \acl(\theta_k(c))$, so $\dim(\theta_k(d)/M) = \dim(\theta_k(c)/M)$; in particular,
$\theta_k(c)$ is a generic of $\theta_k(G_1)$. By surjectivity of $\theta_k$, we may assume that $c$ is a generic of $G_1$. Then, the set of preimages of $\theta_k(a)$ is finite, since for any generic $c$, we have $c\equiv_M a$ and $\theta_{k}(c)\in \acl(M\theta_{i_0}(a))$. 

 Now, assume that $\theta_{i_0}(c)\in\theta_{i_0}( G_1)$ is not generic. 
 There is $d\models q_0|M$ with $c\cdot d\models q_0|M$; thus $\theta_{i_0}(c\cdot d)$ is a generic of $\theta_{i_0}(G)$. There are finitely many elements in $\theta_k(G_1)$ sent to $\theta_{i_0}(c\cdot d)=\theta_{i_0}(c)\cdot\theta_{i_0}(d)$ and finitely many sent to $\theta_{i_0}(d)$, so the same must hold for $\theta_{i_0}(c)$. 
 This shows that $\theta_{k,i_0}$ is a finite-to-one $M$-definable group homomorphism, hence its kernel lies in $M$. As discussed in the last part of the proof Claim \ref{claim2_homres}, we have $\theta_k(c)\in \dcl(M\theta_{i_0}(c))$ for all $k\geq i_0$ and $c\models q_0|M$. It immediately follows that $f(a)\in \dcl(M\tilde{f}(a))$, for all $a\models p|M$, as requested. \end{proof}

In fact, the proof of the theorem yields another key result:

\begin{thm}\label{thm_acvf_enveloppe} Assume $T$ is $\NIP$.
    Let $(G,\cdot)$ be a residually dominated group lying in the valued field sort of $\U$. Then, there exists an $\mathcal{L}_0$-type-definable group $G_1$, a definable group homomorphism $\iota: G^{00} \rightarrow G_1$ with finite kernel, with the following properties:

    \begin{enumerate}
        \item The group $G_1$ is, in the structure $\Ualg$, connected stably dominated, and the image of any generic of $G^{00}$ is generic, in the sense of $ACVF$, in $G_1$.
        
        \item For all $\mathcal{L}_0$-type-definable groups $H_1$, for all definable morphisms $f: G^{00} \rightarrow H_1$, there exist $\mathcal{L}_0$-type-definable $H'_1 \leq H_1$, $H_0 \lhd H'_1$ finite, and $\psi: G_1 \rightarrow H'_1/H_0$ such that $f$ factors through $H'_1$ and $\pi \circ f = \psi \circ \iota$, where $\pi : H'_1 \rightarrow H'_1 / H_0$ is the quotient map. In other words, the following diagram commutes:
    \end{enumerate}

\begin{center}
\begin{tikzcd} G^{00}\arrow[d, "f"] \arrow[r, "\iota"]& G_1 \arrow[d, "\psi"] \\
 H'_1\arrow[r, "\pi"]& H'_1 / H_0  \end{tikzcd}
\end{center}

    Moreover, such a group $G_1$ is unique up to $\mathcal{L}_0$-definable isogeny. 
\end{thm}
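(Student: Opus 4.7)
The construction of $(G_1, \iota)$ will reuse the proof of Theorem \ref{HomomorphismResidue}: fix a strongly $f$-generic residually dominated type $p \in S_G(M)$, let $p_0 \in S(\Ualg)$ be the corresponding global ACVF-type, apply Fact \ref{group_chunk_ntp2} to obtain an $M$-definable algebraic group $H$ together with an $M$-definable group homomorphism $\iota: G^{00} \to H$ of finite kernel, and set $q_0 := \iota_*(p_0)$, $G_1 := Stab_H(q_0)$. The first claim in the proof of Theorem \ref{HomomorphismResidue} shows that $q_0$ concentrates on $G_1$ and that $\iota(G^{00}) \leq G_1$; Theorem \ref{pushforward_res_dom} combined with Theorem \ref{stabdom_eqv_resdom} then shows that $q_0$ is stably dominated in ACVF. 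Since $Stab_H(q_0) = G_1$, $q_0$ is a stably dominated $G_1$-invariant generic of $G_1$, so $G_1$ is stably dominated in the sense of \cite{HR19}, and connectedness follows from $G_1 = Stab(q_0) = G_1^{00}$. For any generic $p'$ of $G^{00}$, Lemma \ref{generics_are_resdom} and Theorem \ref{pushforward_res_dom} imply that $\iota_*(p'_0)$ is a stably dominated type concentrating on $G_1$; by uniqueness of the stably dominated generic on a connected stably dominated group, $\iota_*(p'_0)$ is generic in $G_1$, completing part (1).

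For part (2), I apply Proposition \ref{prop_stabilizer_definable_types} in ACVF with base $M_0 := \acl(M)$. Take independent realizations $a_1, a_2 \models p|M$ in $T$, set $a_3 = a_2 \cdot a_1$, and define, in $\Ualg$, the tuples $g_i := \iota(a_i)$, $h_i := f(a_i)$, $c_i := (g_i, h_i)$ in $G_1 \times H_1$. Since $\iota$ has finite kernel, $a_i \in \acl_T(Mg_i) \subseteq \acl_{\ACVF}(M_0 g_i)$, whence $h_i \in \acl_{\ACVF}(M_0 g_i)$. Residual domination of $p|M$, together with Theorems \ref{change_of_base} and \ref{pushforward_res_dom}, and Theorem \ref{stabdom_eqv_resdom}, imply that $\tpp_{\ACVF}(g_1/M_0) = q_0|M_0$ is a definable generic of $G_1$, and that $\tpp_{\ACVF}(c_2/M_0c_1)$ and $\tpp_{\ACVF}(c_3/M_0c_1)$ are stably dominated and hence $M_0c_1$-definable. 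Proposition \ref{prop_stabilizer_definable_types} then yields an ACVF-type-definable $H_1' \leq H_1$, a finite normal $H_0 \lhd H_1'$, and an ACVF-definable homomorphism $\psi: G_1 \to H_1'/H_0$ satisfying $\psi(\iota(a_2 \cdot a_1^{-1})) = \pi(f(a_2) \cdot f(a_1)^{-1}) = \pi(f(a_2 \cdot a_1^{-1}))$. By Fact \ref{genericity}(i), $G^{00} = (p \cdot p^{-1})^2$, so this identity extends multiplicatively to $\pi \circ f = \psi \circ \iota$ on all of $G^{00}$; in particular $f(G^{00}) \subseteq H_1'$.

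For uniqueness, suppose $(G_1^\sharp, \iota^\sharp)$ is another pair satisfying conditions (1) and (2). Applying the universal property of $(G_1, \iota)$ to $\iota^\sharp$ produces an ACVF-type-definable $(G_1^\sharp)' \leq G_1^\sharp$, a finite normal $H_0 \lhd (G_1^\sharp)'$, and an ACVF-definable $\psi: G_1 \to (G_1^\sharp)'/H_0$; the symmetric argument yields $\psi^\sharp$ going the other way. Because images of $G^{00}$-generics are generic in both $G_1$ and $G_1^\sharp$ by part (1), and connected stably dominated groups have unique generics, both $\psi$ and $\psi^\sharp$ are surjective, and their compositions descend, modulo the finite kernels, to isogenies. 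The main technical obstacle I anticipate is verifying the definability-of-types hypotheses of Proposition \ref{prop_stabilizer_definable_types} in ACVF, since the original data lives in the richer theory $T$ and $f$ is only $T$-definable; the bridge will be Theorem \ref{stabdom_eqv_resdom}, together with the good behaviour of residual domination under pushforwards and base changes established in Section 3.3.
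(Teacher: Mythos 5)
Your overall architecture matches the paper: you build $G_1 = Stab_H(\iota_*(p_0))$ from the data of Theorem \ref{HomomorphismResidue}, prove the universal property via Proposition \ref{prop_stabilizer_definable_types}, and argue uniqueness at the end. But there is a genuine gap in part (1), and a few technical issues elsewhere.

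The gap: you claim that since $\iota_*(p'_0)$ is a stably dominated type concentrating on $G_1$, and $G_1$ is connected stably dominated with a unique generic, it follows that $\iota_*(p'_0)$ is generic. This is false. Uniqueness of the generic in a connected stably dominated group says that if two types are both generic then they coincide; it does not say that every stably dominated type concentrating on the group is generic (for instance, the type of a fixed point of $G_1(M)$ is stably dominated but not generic). To see that images of all generics of $G^{00}$ land on the $\ACVF$-generic of $G_1$, the paper uses the dominating homomorphism $\theta: G_1 \to \mathfrak{g}$ from Fact \ref{homomorphismACVF}, together with a Morley-rank computation in $\mathfrak{g}$ exploiting that for $a \models p|M$ and $a_1 \models p_1|_{Ma}$ with $p_1$ generic, the product $a\cdot a_1$ is again a realization of a generic independent from $a$; this gives $\dim(\theta\iota(a_1)/k(M)) \geq \dim(\theta\iota(a)/k(M))$, forcing maximality. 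Some substitute for this argument is needed.

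Two further points. First, in part (2) you invoke Proposition \ref{prop_stabilizer_definable_types} over $M_0 = \acl(M)$, but that proposition requires the base to be a sufficiently saturated model, and you also assert the types $\tpp_{\ACVF}(c_i/M_0 c_1)$ are "stably dominated and hence $M_0 c_1$-definable": the proposition actually needs $M_0$-definability, and stable domination alone does not give definability of a type over a base that is not $\acl$-closed. The paper handles this by passing to a sufficiently saturated model $N \models T$ and invoking Lemma \ref{lemma_fin_sat_implies_stationary}, using that the relevant $\ACVF$-type is finitely satisfiable in $N$ to obtain stationarity, hence definability. Second, for uniqueness the paper uses an $\ACVF$ group configuration argument (Proposition 3.24 of \cite{Wang_group_config}), showing the two sextuples coming from $\iota$ and $\iota_2$ agree over $N$ by the finite-kernel hypothesis; your universal-property approach is a different route, and while the surjectivity of $\psi$ and $\psi^\sharp$ is plausible, you would still need to show that $\psi$ has finite kernel on all of $G_1$ (not just on $\iota(G^{00})$, which is a priori a proper subgroup) and that the compositions are isogenies; this is not immediate from the statements you cite.
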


\begin{proof}
\setcounter{claim}{0}
    
   Let $M$ be a small model such that $G$ is definable over $M$ and the generics in $G^{00}$ are dominated over $M$. 
    Fix some $p \in S(M)$, residually dominated generic of $G^{00}$. Let $H$ and $\iota: G\to H$ be as in the proof of the previous theorem. Let $q_0$ be the $\ACVF$-part of $q:=\iota_*(p)$. Then, $\iota: G^{00}\to G_1$ is a well-defined map where $G_1=Stab_H(q_0)$ is connected, stably dominated group. 
    



    Let us now prove that all generics of $G^{00}$ are sent to the $\ACVF$-generic of $G_1$. We use Fact \ref{homomorphismACVF}. Let $\theta: G_1 \rightarrow \mathfrak{g}$ be a dominating surjective homomorphism given by Fact \ref{homomorphismACVF} (1), where $\mathfrak{g}$ is an algebraic group over the residue field $\rf(M)$. 
    
    Let $p_1 \in S(M)$ be some strong f-generic of $G^{00}$. Let $a$ realize $p|M$, and $a_1$ realize $p_1|{M a}$. Then, by genericity of $p_1$, the product $a \cdot a_1$ realizes $p_1|{M a}$. 

    \begin{claim}
        The element $\theta \circ \iota (a_1)$ is generic over $M$ in the algebraic group $\mathfrak{g}$.
    \end{claim}

    \begin{proof}
        We compute algebraic ranks in $\mathfrak{g}$.  To simplify notations, let $\alpha = \theta(\iota(a))$ and $\alpha_1=~\theta(\iota(a_1))$. We know that $a \cdot a_1 \forkindep_M a$, in particular $\alpha \cdot \alpha_1$ is algebraically independent over $k(M)$ from $\alpha$. Thus, we have $\dim(\alpha \cdot \alpha_1, \alpha / k(M)) = \dim(\alpha \cdot \alpha_1/ k(M)) + \dim(\alpha / k(M))$. On the other hand, we have $\dim(\alpha \cdot \alpha_1, \alpha / k(M)) = \dim(\alpha \cdot \alpha_1, \alpha_1 / k(M)) \leq \dim(\alpha \cdot \alpha_1/ k(M)) + \dim(\alpha_1 / k(M))$. Hence, we have $\dim(\alpha_1 / k(M)) \geq \dim(\alpha / k(M))$. Then, recall that $q = \tpp(\iota(a)/M)$ is generic in $G_1$. So, by Fact \ref{homomorphismACVF} (2), the rank $\dim(\alpha / k(M))$ is maximal in $\mathfrak{g}$. Thus, by the inequality proved above, the rank $\dim(\alpha_1 / k(M))$ is also maximal in $\mathfrak{g}$. Since $\mathfrak{g}$ is an algebraic group, this precisely means that $\alpha_1$ is generic over $k(M)$. 
       
    \end{proof}

In other words, the type $(\theta \circ \iota)_*(p_1)$ is generic in $\mathfrak{g}$. So, by Fact \ref{homomorphismACVF} (2), the type $\iota_*(p_1)$ is generic in $G_1$, as required.
    
Now, let $H_1$ be some $\mathcal{L}_0$-type-definable group, and $f: G^{00} \rightarrow H_1$ be a definable morphism. Consider the type $r= f_*(p)$. Then $r$ is a residually dominated type, so $r_0$ is stably dominated. Let $N$ be a sufficiently saturated model over which everything is defined. Let $a$ realize $p|N$, and let $b$ realize $p|{Na}$. 
Since $Ker(h)$ is finite, we have $f(a) \in \acl_0(Na) = \acl_0(N\iota(a))$. Similarly, we have $f(b) \in \acl_0(N\iota(b))$ and $f(b \cdot a) \in \acl_0(N \iota(b \cdot a))$.

\begin{claim}
    There exist $\mathcal{L}_0$-type-definable groups $H'_1 \leq H_1$, $H_0 \lhd H'_1$ finite, and $\psi: G_1 \rightarrow H'_1 / H_0$, over $N$, such that, for all $\alpha$, $\alpha^{'}$ realizing $p|_N$, the element $f(\alpha^{'} \cdot \alpha^{-1})$ is in $H_1$, and $\psi$ sends $\iota(\alpha^{'} \cdot \alpha^{-1})$ to the coset of $f(\alpha^{'} \cdot \alpha^{-1})$.
\end{claim}
\begin{proof}
    Let $t$ denote the invariant
    residually dominated type $\tpp(f(a), \iota(a) / N)$. Then $t_0$ is stably dominated, and finitely satisfiable in $N$. Thus, by Lemma \ref{lemma_fin_sat_implies_stationary}, the type $t_0$ is stationary. 

    Then, Proposition \ref{prop_stabilizer_definable_types} yields $\mathcal{L}_0$-type-definable groups $H'_1 \leq H_1$, $H_0 \lhd H'_1$ finite, and a definable group homomorphism $\psi: G_1 \rightarrow H'_1 / H_0$, over $N$, such that, for all $(a_1, b_1)$, $(a_2, b_2)$ realizing $t_0|_N$, the morphism $\psi$ sends $a_2 \cdot a_1^{-1}$ to the coset of $b_2\cdot b_1^{-1}$. To conclude, it suffices to recall that $t_0$ is the $\mathcal{L}_0$-reduct of $\tpp(f(a), \iota(a) / N)$, and that $f$, $\iota$ and $\psi$ are group homomorphisms.
    \end{proof}


\begin{claim}
    For all $\alpha, \alpha'$ such that $\alpha$ realizes $p|_N$ and $\alpha'$ realizes $p|_{N\alpha}$, the type $\tpp(\alpha'\cdot \alpha' / N)$ is generic in $G^{00}$.
\end{claim}

\begin{proof}
This is Lemma \ref{lemma_indep_products_of_generics}.
\end{proof}

So, the $\mathcal{L}(N)$-definable morphisms $\psi \circ \iota$ and $\pi \circ f$ coincide on a type in $G^{00}$ which is generic. Thus, they coincide on the subgroup of $G^{00}$ generated by the generic type $p'|N$, so they are equal.

Note that, since $\psi \circ \iota = \pi \circ f$, and the morphisms $\iota$ and $\pi$ have finite kernel, the morphism $\psi$ has finite kernel if and only if $f$ has finite kernel.

\begin{claim}
    The group $G_1$ is unique up to $\mathcal{L}_0$-definable isogeny.
\end{claim}
\begin{proof}
    Let $G_2$ be an $\mathcal{L}_0$-type-definable group which is also connected and stably dominated. Let $\iota_2 : G^{00} \rightarrow G_2$ be a definable homorphism with finite kernel, such that any generic of $G^{00}$ is sent to the generic of $G_2$. Let $N$ be a sufficiently saturated model over which everything is defined. Let $a$ realize $p|N$, let $b$ realize $p|Na$, and let $c$ realize $p|Nab$. Then, the sextuples $(\iota(a), \iota(b), \iota(a\cdot b), \iota(c), \iota(a\cdot b \cdot c), \iota(b \cdot c))$ and $(\iota_2(a), \iota_2(b), \iota_2(a\cdot b), \iota_2(c), \iota_2(a\cdot b \cdot c), \iota_2(b \cdot c))$ are equivalent over $N$, since, by the finite kernel assumption, they are both equivalent over $N$ to $(a, b, a\cdot b, c, a\cdot b \cdot c, b \cdot c)$. Also, from the hypothesis that $\iota$ and $\iota_2$ send $p$ to the generics of $G_1$ and $G_2$ respectively, these sextuples satisfy the hypotheses of Proposition 3.24 of \cite{Wang_group_config}. Thus, applying this proposition, we find an $\mathcal{L}_0$-definable isogeny between $G_1$ and $G_2$, as required. \end{proof}

This concludes the proof of the theorem. \end{proof}

We conclude by proving the functoriality of the construction from Theorem \ref{thm_acvf_enveloppe}.

\begin{prop}\label{prop_functoriality} Assume $T$ is $\NIP$. Let $(G, \cdot)$, $(H, \cdot)$ be residually dominated definable groups, in the valued field sort. Let $f: G \twoheadrightarrow H$ be a surjective definable group homomorphism. Let $G_1$, $H_1$ be $\mathcal{L}_0$-definable groups, and $\iota_G: G^{00} \rightarrow G_1$, $\iota_H: H^{00} \rightarrow H_1$ be definable group homomorphisms satisfying the conclusions of Theorem \ref{thm_acvf_enveloppe}.

Then, there exists a finite normal subgroup $H_0 \lhd H_1$ and an $\mathcal{L}_0$-definable surjective (in ACVF) group homomorphism $\psi: G_1 \twoheadrightarrow H_1 / H_0$ such that the following diagram commutes:

\begin{center}
\begin{tikzcd} G^{00}\arrow[d, "f"] \arrow[rr, "\iota_G"]&& G_1 \arrow[d, "\psi"] \\
 H^{00} \arrow[r, "\iota_H"]& H_1 \arrow[r] & H_1 / H_0  \end{tikzcd}
\end{center}

\end{prop}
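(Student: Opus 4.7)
The approach is to apply Theorem \ref{thm_acvf_enveloppe}(2) directly to the composition $\iota_H \circ f$. First I would check that this composition is well-defined on $G^{00}$. Since $f$ is a surjective definable group homomorphism, $f^{-1}(H^{00})$ is a type-definable subgroup of $G$ of bounded index, hence contains $G^{00}$, so $f$ restricts to a definable morphism $G^{00} \to H^{00}$. Composing with $\iota_H$ gives a definable morphism $\iota_H \circ f : G^{00} \to H_1$. Applying Theorem \ref{thm_acvf_enveloppe}(2) then yields an $\ACVF$-type-definable subgroup $H'_1 \leq H_1$, a finite normal subgroup $H_0 \lhd H'_1$, and an $\ACVF$-definable morphism $\psi : G_1 \to H'_1/H_0$ such that $\psi \circ \iota_G = \pi \circ \iota_H \circ f$, where $\pi : H'_1 \to H'_1/H_0$ is the quotient map. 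It remains to upgrade this to the statement that $H'_1 = H_1$ (so that $H_0 \lhd H_1$) and that $\psi$ is surjective.

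The key step is to show that $\iota_H \circ f$ maps generics of $G^{00}$ to $\ACVF$-generics of $H_1$. Let $p$ be a residually dominated strongly $f$-generic of $G^{00}$ over some model $M$. Since $f$ is surjective and definable, any translate $h\cdot f_*p$ of $f_*p$ by $h \in H$ lifts, via a preimage $g \in f^{-1}(h)$, to $f_*(g\cdot p)$; non-forking over $M$ is preserved under pushforward along the $M$-definable map $f$, so $f_* p$ is a strongly $f$-generic of $H$ over $M$. Combined with Theorem \ref{pushforward_res_dom}, $f_* p$ is a residually dominated generic of $H^{00}$. Then by Theorem \ref{thm_acvf_enveloppe}(1) applied to $\iota_H$, the type $(\iota_H \circ f)_* p = (\iota_H)_*(f_* p)$ is generic in $H_1$ in the sense of $\ACVF$.

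With this in hand, the remaining verifications are geometric. Since $\iota_H \circ f$ factors through $H'_1$ and sends a generic of $G^{00}$ to an $\ACVF$-generic of $H_1$, the $\ACVF$-type-definable subgroup $H'_1 \leq H_1$ contains a realization of the generic type of the connected stably dominated group $H_1$. Any type-definable subgroup of a connected stably dominated group containing its generic type must be the whole group (otherwise its cosets would yield a nontrivial partition obstructing the invariance of the generic), giving $H'_1 = H_1$ and therefore $H_0 \lhd H_1$. For surjectivity of $\psi$, observe that for $a$ realizing a generic of $G^{00}$, the element $\iota_G(a)$ is generic in $G_1$ by Theorem \ref{thm_acvf_enveloppe}(1), and $\psi(\iota_G(a)) = \pi(\iota_H(f(a)))$ is the image under $\pi$ of a generic of $H_1$, which is generic in the connected group $H_1/H_0$ (connectedness being preserved under quotient by a finite normal subgroup, and $H_0$ finite ensuring $\pi$ preserves genericity). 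Hence $\psi(G_1)$ contains a generic of $H_1/H_0$, so $\psi$ is surjective by the same connectedness argument.

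The main obstacle is the careful verification that genericity is preserved under the pushforward by the surjection $f$ and then by $\iota_H$; once this is established, the conclusions $H'_1 = H_1$ and surjectivity of $\psi$ reduce to the general principle that a type-definable subgroup of a connected stably dominated group containing the generic must be the whole group.
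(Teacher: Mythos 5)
Your proposal is correct and follows essentially the same route as the paper: apply Theorem \ref{thm_acvf_enveloppe}(2) to $\iota_H\circ f$, show that $f_*(p)$ is a residually dominated generic of $H^{00}$ so that $(\iota_H\circ f)_*(p)$ is $\ACVF$-generic in $H_1$, and use this with connectedness to upgrade $H'_1$ to $H_1$ and get surjectivity of $\psi$. The only cosmetic difference is in establishing that $f_*(p)$ is a generic of $H^{00}$: you argue directly from surjectivity that translates of $f_*(p)$ lift and that nonforking is preserved under pushforward, whereas the paper uses the stabilizer characterization $\mathrm{Stab}_G(p)=G^{00}$ from Fact \ref{genericity}(i); both are valid.
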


\begin{proof}
\setcounter{claim}{0}
    Let $M$ be a sufficiently saturated model over which everything is defined. Let $p \in S_G(M)$ be a residually dominated generic concentrating on $G^{00}$.
    \begin{claim}
        The type $f_*(p) \in S_H(M)$ is a residually dominated generic of $H$ concentrating on $H^{00}$.
    \end{claim}
    \begin{proof}
        By $\NIP$, we know that $Stab_G(p)=G^{00}$, so $p$ concentrates on $Stab_G(p)$. Then, since $f$ is a group homomorphism, this implies that $f_*(p)$ concentrates on $Stab_H(f_*(p))$. Now, by surjectivity of $f$, and genericity of $p$, the type $f_*(p)$ is generic in $H$, so $Stab_H(f_*(p))=H^{00}$. Thus, the type $f_*(p)$ concentrates on $H^{00}$.

        To conclude, recall that residual domination is preserved under pushforward by definable maps. So $f_*(p)$ is residually dominated.
    \end{proof}

    Then, we apply the second point of Theorem \ref{thm_acvf_enveloppe}, to the composite $\iota_H \circ f: G^{00} \rightarrow H^{00} \rightarrow H_1$. So, there exists $\mathcal{L}_0$-definable subgroups $H'_1 \leq H_1$, $H_0 \lhd H'_1$ finite, and an $\mathcal{L}_0$-definable morphism $\psi: G_1 \rightarrow H'_1/H_0$ such that $\iota_H \circ f$ factors through $H'_1$ and $\pi \circ \iota_H \circ f = \psi \circ \iota_G$. In other words, the following diagram commutes:

\begin{center}
\begin{tikzcd} G^{00} \arrow[rd, "\iota_H \circ f"] \arrow[rr, "\iota_G"]&& G_1 \arrow[d, "\psi"] \\
 & H'_1 \arrow[r] & H'_1 / H_0  \end{tikzcd}
\end{center}

\begin{claim}
    The subgroup $H'_1 \leq H_1$ is equal to $H_1$.
\end{claim}
\begin{proof}
    Recall that the type $f_*(p)$ is residually dominated generic of $H^{00}$. Thus, by the first point of Theorem \ref{thm_acvf_enveloppe} applied to $H$, the $\mathcal{L}_0$-restriction of the type ${(\iota_H \circ f)}_*(p)$ is the unique generic of $H_1$ in $\Ualg$. Since $\iota_H \circ f$ factors through $H'_1$, this implies that ${(\iota_H \circ f)}_*(p)$ concentrates on $H'_1$. Since this type is generic in $H_1$, it generates $H_1$ in two steps, thus $H_1 \leq H'_1$, as required.
\end{proof}

Thus, we just proved that the following diagram makes sense, and commutes:

\begin{center}
\begin{tikzcd} G^{00} \arrow[d, "f"] \arrow[rd, "\iota_H \circ f"] \arrow[rr, "\iota_G"]&& G_1 \arrow[d, "\psi"] \\ H^{00} \arrow[r, "\iota_H"]
 & H_1 \arrow[r] & H_1 / H_0  \end{tikzcd}
\end{center}

It remains to prove that, in $\Ualg$, the morphism $\psi$ is surjective. Using commutativity of the outer square, and recalling that the $\mathcal{L}_0$-restriction of ${(\iota_H \circ f)}_*(p)$ is generic in $H_1$, we deduce that the image of $\psi$ contains an generic of $H_1 / H_0$ in $\Ualg$. By connectedness of the latter group in $\Ualg$, this implies that $\psi$ is surjective, as required.
\end{proof}

\bibliography{ref}
\bibliographystyle{alpha}

\end{document}